\newtheorem{thm}{Theorem}[section]
\newtheorem{prop}{Proposition}[section]
\newtheorem{lem}{Lemma}[section]
\newtheorem{cor}{Corollary}[section]
\newtheorem{rem}{Remark}[section]
\newtheorem{prob}{Problem}[section]
\newtheorem{algo}{Algorithm}[section]
\newtheorem{assum}{Assumption}[section]
\numberwithin{equation}{section}
\begin{document}
\makeatletter

\begin{center}
\large{\bf Convergence Analysis of Iterative Methods for
Nonsmooth Convex Optimization over Fixed Point Sets of Quasi-Nonexpansive Mappings}\\
{\small This work was supported by the Japan Society for the Promotion of Science through a Grant-in-Aid for Scientific Research (C) (15K04763).}
\end{center}\vspace{5mm}
\begin{center}

\textsc{Hideaki Iiduka}\\
Department of Computer Science, Meiji University
              1-1-1 Higashimita, Tama-ku, Kawasaki-shi, Kanagawa, 214-8571 Japan.
              (iiduka@cs.meiji.ac.jp)
\end{center}

\vspace{2mm}

\footnotesize{
\noindent\begin{minipage}{14cm}
{\bf Abstract:}
This paper considers a networked system with a finite number of users and supposes that each user tries to minimize its own private objective function over its own private constraint set.
It is assumed that each user's constraint set can be expressed as a fixed point set of a certain quasi-nonexpansive mapping.
This enables us to consider the case in which the projection onto the constraint set cannot be computed efficiently. 
This paper proposes two methods for solving the problem of minimizing the sum of their nondifferentiable, convex objective functions over the intersection of their fixed point sets of quasi-nonexpansive mappings in a real Hilbert space.
One method is a parallel subgradient method that can be implemented under the assumption that each user can communicate with other users.
The other is an incremental subgradient method that can be implemented under the assumption that each user can communicate with its neighbors. 
Investigation of the two methods' convergence properties for a constant step size reveals that, with a small constant step size, they approximate a solution to the problem. 
Consideration of the case in which the step-size sequence is diminishing demonstrates that 
the sequence generated by each of the two methods strongly converges to the solution to the problem under certain assumptions. 
Convergence rate analysis of the two methods under certain situations is provided to illustrate the two methods' efficiency.
This paper also discusses nonsmooth convex optimization over sublevel sets of convex functions
and provides numerical comparisons
that demonstrate the effectiveness of the proposed methods.
\end{minipage}
 \\[5mm]

\noindent{\bf Keywords:} {fixed point, incremental subgradient method, nonsmooth convex optimization, parallel subgradient method, quasi-nonexpansive mapping}\\
\noindent{\bf Mathematics Subject Classification:} {65K05, 90C25, 90C90}

\hbox to14cm{\hrulefill}\par

\section{Introduction}\label{sec:1}
This paper focuses on a networked system consisting of a finite number of participating users and considers the problem of minimizing the sum of their nondifferentiable, convex functions over the intersection of their fixed point constraint sets of quasi-nonexpansive mappings in a real Hilbert space.
Optimization problems with a {\em fixed point constraint} (see, e.g., \cite{com,iiduka_mp2014,iiduka,yamada}) enable consideration of constrained optimization problems in which the explicit form of the metric projection onto the constraint set is not always known; i.e., the constraint set is not simple in the sense that the projection cannot be easily calculated.

The motivations for considering this problem are to devise optimization algorithms that have a wider range of application than previous algorithms for convex optimization over fixed point sets of nonexpansive mappings \cite{com,iiduka_mp2014,iiduka,yamada}
and to solve the problem by using {\em parallel and incremental optimization techniques} \cite[Chapter 27]{b-c}, \cite[Section 8.2]{bert}, \cite{neto2009,solo1998}, \cite[PART II]{censor1998}.

Many optimization algorithms have been presented for smooth or nonsmooth optimization. 
The parallel proximal algorithms \cite[Proposition 27.8]{b-c}, \cite[Algorithm 10.27]{comb2011}, \cite{pes2012} are useful for minimizing the sum of nondifferentiable, convex functions over the whole space. 
They use the ideas of the Douglas-Rachford algorithm \cite[Chapters 25 and 27]{b-c}, \cite{comb2007,comb2011,eckstein,lions} and the forward-backward algorithm \cite[Chapters 25 and 27]{b-c}, \cite{com2009,com2008,comb2011}, which use the {\em proximity operators} \cite[Definition 12.23]{b-c} of nondifferentiable, convex functions. 
The incremental subgradient method \cite[Section 8.2]{bert}, \cite{blatt,neto2009,joh,kiwi,nedic2001,solo1998} 
and projected multi-agent algorithms \cite{lobel2011,nedic2009_1,nedic2009,nedic} can minimize the sum of nondifferentiable, convex functions over certain constraint sets by using the {\em subgradients} \cite[Section 23]{rock1970} of the nondifferentiable, convex functions instead of the proximity operators.
The random projection algorithms \cite{nedic2011,wang2015} and the distributed random projection algorithm \cite{lee2013} are useful for constrained convex optimization when the constraint set is not known in advance 
or the projection onto the whole constraint set cannot be computed efficiently.
The incremental subgradient algorithm \cite[Sections 3.2 and 3.3]{neto2009} 
and the asynchronous distributed proximal algorithm \cite[Section 6]{pesquet2015} can work on nonsmooth convex optimization over sublevel sets of convex functions
onto which the projections cannot be easily calculated.
The incremental and parallel gradient methods \cite{iiduka_siopt2013,iiduka_hishinuma_siopt2014}
and an algorithm to accelerate the search for fixed points \cite{iiduka_mp2014} can perform smooth convex optimization over the fixed point sets of nonexpansive mappings. 
There have been no reports, however, on optimization algorithms for nonsmooth convex optimization with fixed point constraints of quasi-nonexpansive mappings.

This paper describes two methods for solving the main problem considered in the paper. 
One is a {\em parallel subgradient method} that can be implemented under the assumption that each user can communicate with other users.
The other is an {\em incremental subgradient method} that can be implemented under the assumption that each user can communicate with its neighbors. 
The proposed methods do not use proximity operators, in contrast to conventional asynchronous distributed or parallel proximal algorithms. 
Moreover, they can optimize over fixed point sets of quasi-nonexpansive mappings, in contrast to conventional incremental subgradient algorithms. 

The intellectual contribution of this paper is to enable one to deal with {\em nonsmooth convex optimization over the fixed point sets of quasi-nonexpansive mappings}, especially in contrast to recent papers \cite{iiduka_siopt2013,iiduka_mp2014} that discussed smooth convex optimization over the fixed point sets of nonexpansive mappings.

To clarify this contribution, 
let us consider the case where each user in the networked system tries to minimize its own private objective function over 
a sublevel set of a nonsmooth convex function, where one assumes each user can use the subgradients of the nonsmooth convex function. 
Although the projection onto the sublevel set cannot be easily computed within a finite number of arithmetic operations,
each user can compute the {\em subgradient projection} \cite[Proposition 2.3]{b-c2001}, \cite[Subchapter 4.3]{vasin}
that satisfies the quasi nonexpansivity condition, {\em not} the nonexpansivity condition (see Section \ref{sec:4} for the definition of the subgradient projection).  
Since the sublevel set coincides with the fixed point set of the subgradient projection, 
the problem considered in the whole system can be expressed as the problem of minimizing the sum of all users' objective functions over the intersection of the fixed point sets of quasi-nonexpansive mappings (see \cite{yamada2011} for applications of the problem and the relaxation method for the problem).
The proposed methods can thus be applied to nonsmooth convex optimization over sublevel constraint sets of nonsmooth convex functions.

The previously reported algorithms \cite{iiduka_siopt2013,iiduka_mp2014} cannot work on nonsmooth convex optimization over sublevel sets of nonsmooth convex functions. This is because they can be applied only when the constraint sets can be represented by fixed point sets of nonexpansive mappings and can work under the restricted situation such that all users' objective functions are smooth and the gradients of their objective functions are Lipschitz continuous and strongly or strictly monotone. The numerical examples section (Section \ref{sec:4}) considers a concrete nonsmooth convex problem over the intersection of sublevel sets of nonsmooth convex functions and describes how the proposed methods can solve it.

Another contribution of this paper is analysis of the proposed methods' convergence for different step-size rules. A small constant step size is shown to result in an approximate solution to the main problem. 
It is also shown that the sequence generated by each proposed method with a diminishing step size strongly converges to the solution to the problem under certain assumptions. In contrast to the convergence analyses of the previously reported algorithms \cite{iiduka_siopt2013,iiduka_mp2014}, we cannot directly apply smooth convex analysis and fixed point theory for nonexpansive mappings to convergence analysis of the proposed methods. However, this problem is solved by using the subgradients of nonsmooth convex objective functions and by modifying the algorithms presented in \cite{iiduka_siopt2013} to make fixed point theory for quasi-nonexpansive mappings applicable.
The rates of convergence of the two methods under certain situations are also provided to illustrate the two methods' efficiency. 

This paper is organized as follows. Section \ref{sec:2} gives the mathematical preliminaries and states the main problem. 
Section \ref{sec:3} presents the proposed parallel subgradient method for solving the main problem and describes its convergence properties for a constant step size and for a diminishing step size and the rates of convergence under certain situations. 
Section \ref{sec:3-1} presents the proposed incremental subgradient method for solving the main problem and describes its convergence properties for a constant step size and for a diminishing step size and the rates of convergence under certain situations. Section \ref{sec:4} considers a nonsmooth convex optimization problem over the intersection of sublevel sets of convex functions and compares numerically the behaviors of the two methods with that of a previous method.
Section \ref{sec:5} concludes the paper with a brief summary and mentions future directions for improving the proposed methods.

\section{Mathematical Preliminaries}\label{sec:2}
Let $H$ be a real Hilbert space with inner product $\langle \cdot, \cdot \rangle$ and its induced norm $\| \cdot \|$.
Let $\mathbb{N}$ denote the set of all positive integers including zero. The identity mapping on $H$ is denoted by $\mathrm{Id}$; i.e., $\mathrm{Id}(x) := x$ $(x\in H)$.

\subsection{Nonexpansivity, demiclosedness, convexity, and subdifferentiability}\label{subsec:2.1}
The fixed point set of a mapping $Q \colon H \to H$ is denoted by $\mathrm{Fix}(Q) := \{x\in H \colon Q (x) =x \}$.
$Q \colon H \to H$ is said to be {\em quasi-nonexpansive} \cite[Definition 4.1(iii)]{b-c} if $\| Q(x) - y \| \leq \| x-y \|$ for all $x\in H$ and for all $y\in \mathrm{Fix}(Q)$.  
When a quasi-nonexpansive mapping has one fixed point, its fixed point set is closed and convex \cite[Proposition 2.6]{b-c2001}. 
$Q \colon H \to H$ is said to be {\em quasi-firmly nonexpansive} \cite[Section 3]{b-c2014} 
if $\| Q(x) - y \|^2 + \| (\mathrm{Id} - Q)(x) \|^2 \leq \| x-y \|^2$ for all $x\in H$ and for all $y\in \mathrm{Fix}(Q)$.
It is observed that any quasi-firmly nonexpansive mapping satisfies the quasi-nonexpansivity condition.
It is proven from \cite[Proposition 4.2]{b-c} that $Q$ is quasi-firmly nonexpansive if and only if 
$R:= 2Q - \mathrm{Id}$ is quasi-nonexpansive.
This means that $(1/2)(\mathrm{Id} + R)$ is quasi-firmly nonexpansive when $R$ is quasi-nonexpansive.

$Q \colon H \to H$ is said to be {\em nonexpansive} \cite[Definition 4.1(ii)]{b-c} 
if $\| Q(x) - Q(y) \| \leq \| x-y \|$ for all $x,y\in H$.
It is obvious that any nonexpansive mapping satisfies the quasi-nonexpansivity condition.
The {\em metric projection} \cite[Subchapter 4.2, Chapter 28]{b-c} onto a nonempty, closed convex set $C$ 
$(\subset H)$, denoted by $P_C$, is defined for all $x\in H$ by $P_C(x) \in C$ and $\| x - P_C(x) \| =
\inf_{y\in C} \| x-y \|$.
$P_C$ is nonexpansive with $\mathrm{Fix}(P_C) = C$ \cite[Proposition 4.8, (4.8)]{b-c}. 

$T \colon H \to H$ is referred to as a {\em demiclosed} mapping \cite[p.108]{goebel1}, \cite[Theorem 4.17]{b-c} if, 
for any $(x_n)_{n\in\mathbb{N}}$ $(\subset H)$, the following implication holds:
the weak convergence of $(x_n)_{n\in\mathbb{N}}$ to $x \in H$ and $\lim_{n\to\infty} \| T(x_n) - w \| = 0$ $(w\in H)$
imply $T(x) = w$.
Section \ref{sec:4} will provide an example of mappings satisfying both quasi-firm nonexpansivity and demiclosedness conditions.

The following proposition indicates the properties of quasi-firmly nonexpansive mappings.

\begin{prop}\label{mainge1} 
Suppose that $Q \colon H \to H$ is quasi-firmly nonexpansive with $\mathrm{Fix}(Q) \neq \emptyset$ and $\alpha \in [0,1)$ and that $Q_\alpha := \alpha \mathrm{Id} + (1-\alpha)Q$. Then, the following hold:
\begin{enumerate}
\item[{\em (i)}] $\mathrm{Fix}(Q) = \mathrm{Fix}(Q_\alpha)$.
\item[{\em (ii)}] $Q_\alpha$ is quasi-nonexpansive.
\item[{\em (iii)}] $\langle x - Q_{\alpha} (x), x - y \rangle \geq (1-\alpha) \| x-Q(x) \|^2$ $(x\in H, y\in \mathrm{Fix}(Q))$.\footnote{If $Q$ is quasi-nonexpansive, $\langle x - Q (x), x -y \rangle \geq  (1/2)\|x - Q (x) \|^2$ $(x\in H, y\in \mathrm{Fix}(Q))$.
Hence,
$\langle x - Q_{\alpha} (x), x - y \rangle \geq ((1-\alpha)/2) \| x-Q(x) \|^2$ $(x\in H, y\in \mathrm{Fix}(Q))$.
We need to use the property in Proposition \ref{mainge1}(iii) to prove Lemmas \ref{lem:0} and \ref{lem:0-1}.
Accordingly, it is assumed that each user has a quasi-firmly nonexpansive mapping (see (A1)).} 
\end{enumerate}
\end{prop}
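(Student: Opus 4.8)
The plan is to extract a single algebraic identity that drives all three parts. From the definition $Q_\alpha := \alpha \mathrm{Id} + (1-\alpha)Q$, one computes directly that
\[
x - Q_\alpha(x) = x - \alpha x - (1-\alpha)Q(x) = (1-\alpha)\bigl(x - Q(x)\bigr)
\]
for every $x \in H$. This identity is the workhorse for (i) and (iii), while a standard convexity estimate handles (ii).

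For part (i), the inclusion $\mathrm{Fix}(Q) \subseteq \mathrm{Fix}(Q_\alpha)$ is immediate: if $Q(x) = x$, then $Q_\alpha(x) = \alpha x + (1-\alpha)x = x$. For the reverse inclusion I would use the identity above: if $Q_\alpha(x) = x$, then $(1-\alpha)(x - Q(x)) = 0$, and since $\alpha \in [0,1)$ forces $1-\alpha > 0$, we obtain $x = Q(x)$, i.e., $x \in \mathrm{Fix}(Q)$.

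For part (ii), fix $y \in \mathrm{Fix}(Q_\alpha) = \mathrm{Fix}(Q)$ (invoking (i)). Writing $Q_\alpha(x) - y = \alpha(x - y) + (1-\alpha)(Q(x) - y)$ and applying the triangle inequality together with the quasi-nonexpansivity of $Q$ (which follows from its quasi-firm nonexpansivity, as noted in the text) yields
\[
\| Q_\alpha(x) - y \| \leq \alpha \| x - y \| + (1-\alpha)\| Q(x) - y \| \leq \alpha \| x - y \| + (1-\alpha)\| x - y \| = \| x - y \|.
\]

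The crux is part (iii). Using the identity, the left-hand side becomes $\langle x - Q_\alpha(x), x - y \rangle = (1-\alpha)\langle x - Q(x), x - y \rangle$, so it suffices to show $\langle x - Q(x), x - y \rangle \geq \| x - Q(x) \|^2$. Here I would invoke the quasi-firm nonexpansivity inequality $\| Q(x) - y \|^2 + \| x - Q(x) \|^2 \leq \| x - y \|^2$ and expand its first term by writing $Q(x) - y = (x - y) - (x - Q(x))$; the cross term produces precisely $-2\langle x - Q(x), x - y \rangle$, and after cancellation of the $\| x - y \|^2$ terms the inequality collapses to $\| x - Q(x) \|^2 \leq \langle x - Q(x), x - y \rangle$. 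Multiplying through by $1-\alpha$ recovers the claim. I expect this expansion to be the only nonroutine step, yet it remains a short computation; no genuine obstacle arises, since the definition of quasi-firm nonexpansivity is tailored to yield exactly this inner-product bound.
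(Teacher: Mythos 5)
Your proposal is correct and, for the substantive part (iii), follows essentially the same route as the paper: both reduce to showing $\langle x - Q(x), x-y\rangle \geq \|x-Q(x)\|^2$ by expanding $\|Q(x)-y\|^2$ (equivalently, the paper's identity $\langle x - Q(x), x-y\rangle = \tfrac{1}{2}(\|x-Q(x)\|^2 + \|x-y\|^2 - \|Q(x)-y\|^2)$) and then invoking quasi-firm nonexpansivity, before scaling by $1-\alpha$ via $x - Q_\alpha(x) = (1-\alpha)(x-Q(x))$. The only difference is that for (i) and (ii) you supply short direct arguments where the paper simply cites Maing\'e's remarks; your arguments are valid and make the proof self-contained.
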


\begin{proof}
Remarks 2.1(i0) and (i1) in \cite{mainge2010} imply Proposition \ref{mainge1}(i) and (ii).
From $\| x-y \|^2 = \|x\|^2 -2 \langle x,y \rangle + \|y\|^2$ $(x,y\in H)$,
it is found that, for all $x\in H$ and for all $y\in \mathrm{Fix}(Q)$,
$\langle x - Q (x), x -y \rangle 
= (1/2)( \|x - Q (x) \|^2 + \|x-y\|^2 - \|Q(x) -y  \|^2 )$,
which, together with the quasi-firm nonexpansivity of $Q$, implies that
$\langle x - Q (x), x -y \rangle \geq  \|x - Q (x) \|^2$.
Hence, for all $x\in H$ and for all $y\in \mathrm{Fix}(Q)$,
$\langle x - Q_{\alpha} (x), x - y \rangle  = (1-\alpha) \langle x - Q (x), x -y \rangle \geq (1-\alpha) \| x-Q(x) \|^2$.
\end{proof}

A function $f \colon H \to \mathbb{R}$ is said to be {\em strictly convex} \cite[Definition 8.6]{b-c}
if, for all $x,y\in H$ and for all $\alpha \in (0,1)$,
$x \neq y$ implies $f(\alpha x + (1-\alpha)y) < \alpha f(x) + (1-\alpha) f(y)$.
$f$ is {\em strongly convex} with constant $\beta$ \cite[Definition 10.5]{b-c} 
if there exists $\beta > 0$ such that, for all $x,y\in H$ and for all $\alpha \in (0,1)$,
$f(\alpha x + (1-\alpha)y) + (\beta/2) \alpha (1-\alpha) \|x-y\|^2 \leq \alpha f(x) + (1-\alpha) f(y)$.

The {\em subdifferential} \cite[Definition 16.1]{b-c}, \cite[Section 23]{rock1970} of $f \colon H \to \mathbb{R}$
is defined for all $x\in H$ by 
\begin{align*}
\partial f (x) := \left\{u\in H \colon f (y) \geq f (x) + \left\langle y-x,u \right\rangle \text{ } \left(y\in H \right)  \right\}.
\end{align*}
We call $u$ $(\in \partial f(x))$ the {\em subgradient} of $f$ at $x\in H$.
If $f$ is strictly convex, $\partial f$ is strictly monotone; i.e., 
$\langle x-y,u-v \rangle > 0$ ($x,y\in H$ with $x\neq y$, $u \in \partial f(x), v \in \partial f(y)$) \cite[Example 22.3(ii)]{b-c}. 
If $f$ is strongly convex with constant $\beta$, $\partial f$ is strongly monotone; i.e.,
$\langle x-y,u-v \rangle \geq \beta \|x-y\|^2$ ($x,y\in H, u \in \partial f(x), v \in \partial f(y)$)
\cite[Example 22.3(iv)]{b-c}.

\begin{prop}{\em \cite[Propositions 16.14(ii), (iii)]{b-c}}\label{prop:sub}
Let $f \colon H \to \mathbb{R}$ be continuous and convex with $\mathrm{dom}(f) := \{ x\in H \colon f(x) < \infty \}
=H$.
Then, $\partial f(x) \neq \emptyset$ for all $x\in H$.
Moreover, for all $x\in H$, there exists $\delta > 0$ such that $\partial f(B(x;\delta))$ is bounded,
where $B(x;\delta)$ stands for a closed ball with center $x$ and radius $\delta$.
\end{prop}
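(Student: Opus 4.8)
The plan is to treat the two assertions separately, both relying only on the continuity and convexity of $f$ together with $\mathrm{dom}(f) = H$. For nonemptiness of $\partial f(x)$ I would argue geometrically via a supporting hyperplane of the epigraph; for local boundedness I would first upgrade continuity to local Lipschitz continuity and then read off a uniform bound on the subgradients from the subgradient inequality.

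For the first assertion, consider the epigraph $E := \{(y,t) \in H \times \mathbb{R} \colon f(y) \leq t\}$, which is convex because $f$ is convex. Since $f$ is continuous at $x$, it is bounded above on a ball around $x$, so $E$ has nonempty interior, while $(x, f(x))$ lies on its boundary. I would then invoke the geometric Hahn--Banach (supporting hyperplane) theorem to produce a nonzero continuous functional $(u,s) \in H \times \mathbb{R}$ with $\langle u, y\rangle + s\, t \leq \langle u, x\rangle + s\, f(x)$ for every $(y,t) \in E$. Letting $t \to \infty$ forces $s \leq 0$, and $s = 0$ would give $\langle u, y - x\rangle \leq 0$ for all $y \in H$ (here I use $\mathrm{dom}(f) = H$, so every $y$ is admissible), whence $u = 0$, contradicting $(u,s) \neq 0$. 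Thus $s < 0$, and dividing by $-s$ and taking $t = f(y)$ yields $f(y) \geq f(x) + \langle y - x, \bar u\rangle$ with $\bar u := u/(-s)$; that is, $\bar u \in \partial f(x)$.

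For the second assertion, I would first use continuity to fix $\delta > 0$ and $M > 0$ with $|f| \leq M$ on $B(x; 2\delta)$. The key step is the local Lipschitz estimate: for $y, z \in B(x;\delta)$ with $\rho := \|y - z\| > 0$, the point $w := z + \delta (z - y)/\rho$ lies in $B(x; 2\delta)$ and $z$ is the convex combination $z = \frac{\delta}{\delta + \rho} y + \frac{\rho}{\delta + \rho} w$; convexity then gives $f(z) - f(y) \leq \frac{\rho}{\delta+\rho}\bigl(f(w) - f(y)\bigr) \leq (2M/\delta)\|z - y\|$, and by symmetry $f$ is Lipschitz on $B(x;\delta)$ with constant $L := 2M/\delta$. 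Finally, for any $z \in B(x;\delta)$ and $u \in \partial f(z)$, the subgradient inequality $f(z + hv) \geq f(z) + h \langle v, u\rangle$ for a unit vector $v$ and small $h > 0$ gives $\langle v, u\rangle \leq (f(z + hv) - f(z))/h \leq L$; taking the supremum over unit $v$ yields $\|u\| \leq L$, so $\partial f(B(x;\delta))$ is bounded.

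I expect the main obstacle to lie in the two \emph{qualitative} steps rather than in the algebra: in part (i), verifying that the separating hyperplane is non-vertical (ruling out $s = 0$), which is exactly where the full-domain hypothesis $\mathrm{dom}(f) = H$ enters and without which one only obtains a supporting, possibly vertical, functional; and in part (ii), the passage from mere continuity to a quantitative local Lipschitz bound, for which the three-point convexity comparison above is the crux. Once these are in hand, the remaining estimates are routine.
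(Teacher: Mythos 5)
The paper does not prove this proposition at all: it is quoted verbatim from the literature, with the proof delegated to \cite[Propositions 16.14(ii), (iii)]{b-c}. So there is nothing in the paper to compare against line by line; what you have written is a self-contained proof of the cited result, and it is correct. Your route is in fact the standard one underlying the reference: nonemptiness of $\partial f(x)$ via a supporting hyperplane of the epigraph at $(x,f(x))$ (with the full-domain hypothesis used exactly where you say, to rule out a vertical separating functional), and local boundedness of $\partial f$ via the upgrade from continuity to a local Lipschitz bound by the three-point convexity comparison, followed by the directional-derivative estimate $\langle v,u\rangle \leq (f(z+hv)-f(z))/h \leq L$. The only point worth tightening is at the very end of part (ii): since $B(x;\delta)$ is the \emph{closed} ball, a point $z$ on its boundary has $z+hv \notin B(x;\delta)$, so the Lipschitz constant you invoke must be the one valid on a slightly larger ball (e.g.\ establish the Lipschitz bound on $B(x;3\delta/2)$ with constant $4M/\delta$ using boundedness on $B(x;2\delta)$, and take $h \leq \delta/2$); this only changes the constant, not the argument. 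With that adjustment the proof is complete.
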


The following proposition is used to prove the main results in the paper.
\begin{prop}{\em \cite[Lemma 2.1]{mainge2010}}\label{mainge}
Let $(\Gamma_n)_{n\in \mathbb{N}} \subset \mathbb{R}$ and suppose that 
$(\Gamma_{n_j})_{j\in \mathbb{N}}$ $(\subset (\Gamma_n)_{n\in \mathbb{N}})$
exists such that $\Gamma_{n_j} < \Gamma_{n_j +1}$ for all $j\in \mathbb{N}$.
Define $(\tau(n))_{n \geq n_0} \subset \mathbb{N}$ by $\tau(n) := \max \{ k \leq n \colon \Gamma_k < \Gamma_{k+1} \}$ for some $n_0 \in \mathbb{N}$.
Then, $(\tau(n))_{n \geq n_0}$ is increasing and $\lim_{n\to \infty} \tau(n)=\infty$.
Moreover, $\Gamma_{\tau(n)} \leq \Gamma_{\tau(n)+1}$ and $\Gamma_n \leq \Gamma_{\tau(n)+1}$ for all $n \geq n_0$. 
\end{prop}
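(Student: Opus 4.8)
The plan is to exploit the hypothesis directly. The existence of the subsequence $(\Gamma_{n_j})_{j\in\mathbb{N}}$ with $\Gamma_{n_j} < \Gamma_{n_j+1}$ guarantees that the set of \emph{ascent indices} $A := \{k\in\mathbb{N} \colon \Gamma_k < \Gamma_{k+1}\}$ is infinite. Taking $n_0$ to be (at least) the smallest element of $A$ then makes the set $\{k\le n \colon \Gamma_k<\Gamma_{k+1}\}$ nonempty and finite for every $n\ge n_0$, so that $\tau(n)$ is well defined as its maximum. I would then establish the four assertions one at a time, the first three being essentially immediate consequences of the definition together with the infinitude of $A$, while the final inequality requires a short monotonicity argument.

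For monotonicity of $\tau$, I would observe that $n\mapsto \{k\le n \colon \Gamma_k<\Gamma_{k+1}\}$ is nondecreasing with respect to inclusion, so taking maxima over these nested sets yields $\tau(n)\le \tau(n+1)$ for all $n\ge n_0$. For $\lim_{n\to\infty}\tau(n)=\infty$, I would argue by contradiction: if $\tau$ were bounded, say $\tau(n)\le N$ for all $n\ge n_0$, then no index $k>N$ could lie in $A$, contradicting the fact that $A$ is infinite; hence $\tau(n)\to\infty$. The inequality $\Gamma_{\tau(n)}\le \Gamma_{\tau(n)+1}$ is then immediate, since $\tau(n)\in A$ gives even the strict inequality $\Gamma_{\tau(n)}<\Gamma_{\tau(n)+1}$.

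The only step with any content is the last inequality $\Gamma_n\le \Gamma_{\tau(n)+1}$, and I would split into two cases according to whether $\tau(n)=n$ or $\tau(n)<n$. If $\tau(n)=n$, then $\Gamma_n=\Gamma_{\tau(n)}<\Gamma_{\tau(n)+1}$ and we are done. If $\tau(n)<n$, then by maximality of $\tau(n)$ every index $j$ with $\tau(n)<j\le n$ fails to lie in $A$, i.e.\ $\Gamma_j\ge\Gamma_{j+1}$ for all such $j$; chaining these inequalities gives $\Gamma_{\tau(n)+1}\ge \Gamma_{\tau(n)+2}\ge\cdots\ge\Gamma_n$, whence $\Gamma_n\le\Gamma_{\tau(n)+1}$. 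I expect the main (and essentially only) obstacle to be getting the index bookkeeping right in this chain — in particular, remembering that maximality of $\tau(n)$ controls the indices strictly above $\tau(n)$ up to $n$, so that the comparison runs from $\tau(n)+1$ down to $n$ rather than in the opposite direction.
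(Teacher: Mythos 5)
Your proof is correct, and the paper offers no proof of its own here — Proposition 2.3 is simply quoted from Maing\'e's Lemma 2.1. Your argument (infinitude of the ascent set $A$, nestedness of the sets $\{k\le n\colon \Gamma_k<\Gamma_{k+1}\}$ for monotonicity and unboundedness of $\tau$, and the descending chain $\Gamma_{\tau(n)+1}\ge\cdots\ge\Gamma_n$ from maximality of $\tau(n)$) is precisely the standard proof given in that reference.
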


\subsection{Main problem}\label{subsec:2.3}
The focus here is a networked system with $I$ users. 
Let 
\begin{align*}
\mathcal{I} := \{1,2,\ldots, I\}.
\end{align*}
Suppose that user $i$ $(i\in {\mathcal{I}})$ has its own private objective function, denoted by $f^{(i)} \colon H \to \mathbb{R}$, 
and its own mapping, denoted by $Q^{(i)} \colon H \to H$.
The following notation is used. 
\begin{align*}
&Q_\alpha^{\left(i \right)} := \alpha^{\left(i \right)} \mathrm{Id} 
    + \left( 1 - \alpha^{\left(i\right)}  \right) Q^{\left(i \right)} \text{ } \left( \alpha^{(i)} \in (0,1) \right),
\text{ } 
X := \bigcap_{i\in {\mathcal{I}}} \mathrm{Fix}\left( Q^{(i)} \right),\\
&f := \sum_{i\in {\mathcal{I}}} f^{(i)}, \text{ }
X^\star := \left\{x\in X \colon f(x) = f^\star := \inf_{y\in X} f \left(y\right) \right\}.
\end{align*}
 
The following problem is discussed in this paper.

\begin{prob}\label{prob:1} 
Suppose that the following (A1)--(A4) hold.
\begin{enumerate}
\item[{\em (A1)}] 
$Q^{(i)} \colon H \to H$ $(i\in {\mathcal{I}})$ is quasi-firmly nonexpansive.
\item[{\em (A2)}] 
$f^{(i)} \colon H \to \mathbb{R}$ $(i\in {\mathcal{I}})$ is continuous and convex
with $\mathrm{dom}(f^{(i)}) = H$.\footnote{When $H=\mathbb{R}^N$, a convex function $f^{(i)}$ satisfies the continuity condition \cite[Corollary 8.31]{b-c}.
Therefore, (A2) can be replaced by the convexity condition of $f^{(i)}$ with $\mathrm{dom}(f^{(i)})=\mathbb{R}^N$.}
\item[{\em (A3)}] 
User $i$ $(i\in {\mathcal{I}})$ can use its own private $Q^{(i)}$ and $\partial f^{(i)}$.
\item[{\em (A4)}]
$X^\star \neq \emptyset$.
\end{enumerate} 
Then, find $x^\star \in X^\star$.
\end{prob}

\section{Parallel Subgradient Method}\label{sec:3}
The section presents a method for solving Problem \ref{prob:1}
under the assumption that 
\begin{enumerate}
\item[(A5)]
each user can communicate with other users.
\end{enumerate}

\begin{algo}\label{algo:1}
\text{}

Step 0. User $i$ $(i\in {\mathcal{I}})$ sets 
$\alpha^{(i)}$, $(\lambda_n)_{n\in\mathbb{N}} \subset (0,\infty)$, 
and $x_0 \in H$. 

Step 1. User $i$ $(i\in {\mathcal{I}})$ computes $x_n^{(i)} \in H$ using
\begin{align*}
x_n^{\left( i \right)} :=   
Q_{\alpha}^{\left( i \right)} \left( x_n \right) 
  - \lambda_n g_n^{\left(i \right)}, \text{ where } 
g_n^{\left(i\right)} \in \partial f^{\left(i \right)} \left( Q_{\alpha}^{\left(i\right)} \left(x_n \right) \right).
\end{align*}
User $i$ $(i\in \mathcal{I})$ transmits $x_n^{(i)}$ to all users. 

Step 2. User $i$ $(i\in {\mathcal{I}})$ computes $x_{n+1} \in H$ as 
\begin{align*}
x_{n+1} := \frac{1}{I} \sum_{i\in {\mathcal{I}}} x_n^{(i)}.
\end{align*} 
The algorithm sets $n := n+1$ and returns to Step 1.
\end{algo}

Algorithm \ref{algo:1} requires that all users set the same step-size sequence $(\lambda_n)_{n\in\mathbb{N}}$ before algorithm execution and that they synchronize at each iteration. See \cite[Section 6]{pesquet2015} for the asynchronous distributed proximal algorithm that was used for solving nonsmooth convex optimization. Assumption (A5) ensures that each user has access to all $x_n^{(i)}$ and can compute $x_{n+1} = (1/I)\sum_{i\in {\mathcal{I}}} {x}_n^{(i)}$.
This means that a common variable $x_n$ $(n\in \mathbb{N})$ is shared by all users. To illustrate this situation, let us assume that there exists an operator who manages the system. Even in a situation where (A5) is not satisfied, the operator can still communicate with all users. Accordingly, if the operator sets an initial point $x_0$ and transmits $x_n$ to all users at each iteration $n$, user $i$ can compute $x_n^{(i)}$ by using its own private information. Since the operator has access to all $x_n^{(i)}$, the operator can compute $x_{n+1}$ and transmit it to all users. Therefore, assuming the existence of an operator guarantees that all users can share $x_n$ $(n\in \mathbb{N})$ in Algorithm \ref{algo:1}.

The convergence of Algorithm \ref{algo:1} depends on two assumptions.

\begin{assum}\label{assumption0}
For all $i\in \mathcal{I}$, there exist $M_1^{(i)}, M_2^{(i)} \in \mathbb{R}$ such that 
\begin{align*}
&\sup \left\{ \left\| g \right\|  \colon g\in \partial f^{\left(i\right)} \left(Q_{\alpha}^{\left(i\right)} \left(x_n\right) \right), \text{ } n\in \mathbb{N}  \right\} \leq M_1^{(i)},\\
&\sup \left\{ \left\| g \right\|  \colon g\in \partial f^{\left(i\right)} \left(x_n \right), \text{ } n\in \mathbb{N}  \right\} \leq M_2^{(i)}.
\end{align*} 
\end{assum}
\begin{assum}\label{assum:0}
The sequence $(x_n^{(i)})_{n\in\mathbb{N}}$ $(i\in \mathcal{I})$ is bounded.
\end{assum}

Assumption \ref{assum:0} implies Assumption \ref{assumption0}.
Indeed, the definition of $x_n$ $(n\in\mathbb{N})$ and the boundedness of $(x_n^{(i)})_{n\in\mathbb{N}}$ $(i\in\mathcal{I})$ 
ensure that $(x_n)_{n\in\mathbb{N}}$ is bounded.
From the quasi-nonexpansivity of $Q_\alpha^{(i)}$ $(i\in{\mathcal{I}})$, 
we have $\|Q_\alpha^{(i)} (x_n) - x \| \leq \| x_n - x \|$ $(x\in X)$, which, together with the boundedness of $(x_n)_{n\in\mathbb{N}}$, means that $(Q_\alpha^{(i)} (x_n))_{n\in\mathbb{N}}$ $(i\in{\mathcal{I}})$ is bounded. 
Hence, Proposition \ref{prop:sub} implies that Assumption \ref{assumption0} holds.

A convergence analysis of Algorithm \ref{algo:1} with a constant step size when Assumption \ref{assumption0} holds is given in Subsection \ref{subsec:3.1}. The discussion in Subsection \ref{subsec:3.2} needs to satisfy Assumption \ref{assum:0}, which is stronger than Assumption \ref{assumption0}, to enable the convergence property of Algorithm \ref{algo:1} with a diminishing step-size sequence to be studied. This is because, in the case where Assumption \ref{assum:0} does not hold and $(\| x_n - x \|)_{n\in\mathbb{N}}$ $(x\in X)$ is not monotone decreasing (see Case 2 in the proof of Theorem \ref{thm:2}), a weak convergent subsequence of $(x_n)_{n\in\mathbb{N}}$ does not exist; i.e., we cannot discuss weak convergence of Algorithm \ref{algo:1} to a point in $X^\star$.

Here we provide an example satisfying Assumption \ref{assum:0} and (A4). Let us assume that user $i$ $(i\in {\mathcal{I}})$ can choose in advance a simple, bounded, closed convex set $X^{(i)}$ (e.g., $X^{(i)}$ is a closed ball with a large enough radius) satisfying $X^{(i)} \supset \mathrm{Fix}(Q^{(i)})$. Then, user $i$ can compute $P^{(i)} := P_{X^{(i)}}$ and
\begin{align}\label{equation:0}
x_n^{\left( i \right)} :=   
P^{\left(i \right)} \left( Q_{\alpha}^{\left( i \right)} \left( x_n \right) 
  - \lambda_n g_n^{\left(i \right)} \right)
\end{align}
instead of $x_n^{(i)}$ in Algorithm \ref{algo:1}. 
Since $(x_n^{(i)})_{n\in \mathbb{N}} \subset X^{(i)}$ and $X^{(i)}$ is bounded, $(x_n^{(i)})_{n\in\mathbb{N}}$ is bounded. 
Since $X^{(i)}$ is bounded and $X \subset X^{(i)}$ $(i\in\mathcal{I})$, $X$ is also bounded. 
Hence, the continuity and convexity of $f$ ensure that $X^\star \neq \emptyset$; i.e., (A4) holds \cite[Proposition 11.14]{b-c}.
We can show that Algorithm \ref{algo:1} with \eqref{equation:0}
satisfies the convergence properties in the main theorems in this paper by referring to the proofs of the theorems.

The following is an important lemma that will be used to prove the main theorems.

\begin{lem}\label{lem:0}
Suppose that $(x_n)_{n\in\mathbb{N}}$ is the sequence generated by Algorithm \ref{algo:1} and that Assumptions (A1)--(A5) and \ref{assumption0} hold. The following properties then hold:
\begin{enumerate}
\item[{\em (i)}] 
For all $n\in \mathbb{N}$ and for all $x\in X$,
\begin{align*}
\left\| x_{n+1} - x \right\|^2 
&\leq \left\| x_n -x \right\|^2 
- \frac{2}{I}\sum_{i\in {\mathcal{I}}} \alpha^{\left(i \right)} \left( 1 -\alpha^{\left(i\right)} \right)
\left\| x_n - Q^{\left(i \right)} \left( x_n  \right)  \right\|^2\\
&\quad + 2 M_1 \lambda_n^2 - \frac{2\lambda_n}{I} \sum_{i\in {\mathcal{I}}} 
\left\langle x_n - x, g_n^{\left(i\right)} \right\rangle,
\end{align*}
where $M_1 := \max_{i\in {\mathcal{I}}} M_1^{{(i)}^2} < \infty$.
\item[{\em (ii)}]
For all $n\in \mathbb{N}$ and for all $x\in X$,
\begin{align*}
\left\| x_{n+1} - x \right\|^2 
&\leq \left\| x_n -x \right\|^2 + 2 M_1 \lambda_n^2
 + \frac{2\lambda_n}{I} \left( f(x) - f(x_n)  \right)\\
&\quad + \frac{2\lambda_n}{I} \left( \sqrt{M_1} +  M_2 \right) \sum_{i\in {\mathcal{I}}} 
 \left\| x_n - Q_\alpha^{\left(i \right)} \left( x_n  \right)  \right\|,
\end{align*}
where $M_2 := \max_{i\in {\mathcal{I}}} M_2^{(i)} < \infty$.
\end{enumerate}
\end{lem}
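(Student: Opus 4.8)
The plan is to prove (i) directly from the update rule and then obtain (ii) from (i) by converting the inner product $\langle x_n - x, g_n^{(i)}\rangle$ into function values through the subgradient inequality. Throughout I fix $n\in\mathbb{N}$ and $x\in X$, so that $x\in\mathrm{Fix}(Q^{(i)})=\mathrm{Fix}(Q_\alpha^{(i)})$ for every $i\in\mathcal{I}$ by Proposition \ref{mainge1}(i).

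For part (i), I would first invoke the convexity of $\|\cdot\|^2$ together with $x_{n+1} = (1/I)\sum_{i\in\mathcal{I}} x_n^{(i)}$ to get $\|x_{n+1}-x\|^2 \leq (1/I)\sum_{i\in\mathcal{I}}\|x_n^{(i)}-x\|^2$, reducing the claim to a per-user estimate. Writing $x_n^{(i)} - x = (x_n - x) - \bigl[(x_n - Q_\alpha^{(i)}(x_n)) + \lambda_n g_n^{(i)}\bigr]$ and expanding the square, I would handle the three pieces separately: the cross term $-2\langle x_n - x,\, x_n - Q_\alpha^{(i)}(x_n)\rangle$ is controlled by Proposition \ref{mainge1}(iii), giving the bound $-2(1-\alpha^{(i)})\|x_n - Q^{(i)}(x_n)\|^2$; the term $-2\lambda_n\langle x_n - x, g_n^{(i)}\rangle$ is retained unchanged; and the remaining squared term is bounded by the elementary inequality $\|a+b\|^2 \leq 2\|a\|^2 + 2\|b\|^2$, yielding $2\|x_n - Q_\alpha^{(i)}(x_n)\|^2 + 2\lambda_n^2\|g_n^{(i)}\|^2$. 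Using the identity $x_n - Q_\alpha^{(i)}(x_n) = (1-\alpha^{(i)})(x_n - Q^{(i)}(x_n))$ and the bound $\|g_n^{(i)}\|\leq M_1^{(i)}\leq\sqrt{M_1}$ from Assumption \ref{assumption0}, the two fixed-point contributions combine with coefficient $-2(1-\alpha^{(i)}) + 2(1-\alpha^{(i)})^2 = -2\alpha^{(i)}(1-\alpha^{(i)})$, and averaging over $i\in\mathcal{I}$ reproduces (i).

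For part (ii), I would start from (i), discard the nonpositive fixed-point term, and bound the subgradient inner product. Splitting $-\langle x_n - x, g_n^{(i)}\rangle = -\langle x_n - Q_\alpha^{(i)}(x_n), g_n^{(i)}\rangle - \langle Q_\alpha^{(i)}(x_n) - x, g_n^{(i)}\rangle$, I would apply the subgradient inequality for $g_n^{(i)}\in\partial f^{(i)}(Q_\alpha^{(i)}(x_n))$ evaluated at $y=x$ to obtain $-\langle Q_\alpha^{(i)}(x_n)-x, g_n^{(i)}\rangle \leq f^{(i)}(x) - f^{(i)}(Q_\alpha^{(i)}(x_n))$, and then a second subgradient inequality for some $h_n^{(i)}\in\partial f^{(i)}(x_n)$ evaluated at $y=Q_\alpha^{(i)}(x_n)$ to replace $f^{(i)}(Q_\alpha^{(i)}(x_n))$ by $f^{(i)}(x_n)$ at the cost of $M_2\|x_n - Q_\alpha^{(i)}(x_n)\|$ via Cauchy--Schwarz and $\|h_n^{(i)}\|\leq M_2^{(i)}\leq M_2$. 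The residual term $-\langle x_n - Q_\alpha^{(i)}(x_n), g_n^{(i)}\rangle$ is bounded by Cauchy--Schwarz and $\|g_n^{(i)}\|\leq\sqrt{M_1}$, contributing $\sqrt{M_1}\|x_n - Q_\alpha^{(i)}(x_n)\|$. Summing over $i\in\mathcal{I}$ and using $f = \sum_{i\in\mathcal{I}} f^{(i)}$ collapses the function differences into $f(x) - f(x_n)$ and yields (ii).

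The main obstacle is securing the exact coefficient $-2\alpha^{(i)}(1-\alpha^{(i)})$ in part (i). The decisive choice is to bound the squared term by $\|a+b\|^2 \leq 2\|a\|^2 + 2\|b\|^2$ rather than by expanding its cross product; this forces a $2(1-\alpha^{(i)})^2$ contribution, which must then be canceled down by the \emph{sharp} factor $(1-\alpha^{(i)})$ furnished by Proposition \ref{mainge1}(iii). As the footnote to that proposition warns, the weaker factor $(1-\alpha^{(i)})/2$ available from mere quasi-nonexpansivity would leave the net coefficient $(1-\alpha^{(i)})(1-2\alpha^{(i)})$, which fails to be nonpositive once $\alpha^{(i)}\geq 1/2$; this is exactly why quasi-firm nonexpansivity (assumption (A1)) is indispensable here.
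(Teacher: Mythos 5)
Your proof is correct and follows essentially the same route as the paper: the per-user estimate obtained from Proposition \ref{mainge1}(iii) together with $\|a+b\|^2\leq 2\|a\|^2+2\|b\|^2$ and the convexity of $\|\cdot\|^2$ for part (i), and the two subgradient inequalities plus Cauchy--Schwarz for part (ii). One small slip in your closing commentary: with only the quasi-nonexpansive constant $(1-\alpha^{(i)})/2$ the net coefficient $(1-\alpha^{(i)})(1-2\alpha^{(i)})$ fails to be nonpositive when $\alpha^{(i)}<1/2$, not when $\alpha^{(i)}\geq 1/2$; the substantive point---that the sharp constant furnished by quasi-firm nonexpansivity is what yields $-2\alpha^{(i)}(1-\alpha^{(i)})$ for every $\alpha^{(i)}\in(0,1)$---stands.
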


\begin{proof}
(i) 
Choose $x\in X \subset \mathrm{Fix}(Q^{(i)})$ $(i\in{\mathcal{I}})$ and $n\in \mathbb{N}$ arbitrarily. From $-2 \langle x,y \rangle = \| x-y \|^2 - \|x\|^2 - \|y\|^2$ $(x,y\in H)$, we find that, for all $i\in {\mathcal{I}}$, 
\begin{align*}
&\quad 2 \left\langle x_n^{(i)} - x_n + \lambda_n g_n^{\left(i\right)}, x_n - x \right\rangle\\
&= -2 \left\langle x_n - x_n^{(i)}, x_n - x \right\rangle 
  + 2 \lambda_n \left\langle x_n - x, g_n^{\left(i\right)} \right\rangle\\
&= \left\| x_n^{(i)} - x \right\|^2 - \left\| x_n - x_n^{(i)} \right\|^2  - \left\| x_n - x \right\|^2
 + 2 \lambda_n \left\langle x_n - x, g_n^{\left(i\right)} \right\rangle.
\end{align*}
Moreover, Proposition \ref{mainge1}(iii) ensures that, for all $i\in {\mathcal{I}}$,
\begin{align*}
2 \left\langle Q_\alpha^{(i)} (x_n) - x_n, x_n - x \right\rangle 
\leq -2 \left( 1 - \alpha^{(i)} \right) \left\| x_n - Q^{(i)} (x_n) \right\|^2.
\end{align*}
Accordingly, from $x_n^{(i)} := Q_\alpha^{(i)} (x_n) - \lambda_n g_n^{(i)}$ $(i\in {\mathcal{I}})$, 
\begin{align*}
2 \left\langle x_n^{(i)} - x_n +  \lambda_n g_n^{\left(i\right)}, x_n - x \right\rangle
&= 2 \left\langle Q_{\alpha}^{\left(i\right)} \left(x_n \right) - x_n, x_n - x    \right\rangle\\
&\leq -2 \left( 1 - \alpha^{(i)} \right) \left\| x_n - Q^{(i)} (x_n) \right\|^2.
\end{align*}
Therefore, for all $i\in {\mathcal{I}}$,
\begin{align*}
\left\| x_n^{(i)} - x \right\|^2 
&\leq \left\| x_n - x \right\|^2 + \left\| x_n - x_n^{(i)} \right\|^2
 - 2 \lambda_n \left\langle x_n - x, g_n^{\left(i\right)} \right\rangle\\
&\quad  -2 \left( 1 - \alpha^{(i)} \right) \left\| x_n - Q^{(i)} (x_n) \right\|^2.
\end{align*}
Moreover, from $\| x - y\|^2 \leq 2 \|x\|^2 + 2 \|y\|^2$ $(x,y\in H)$, 
\begin{align*}
\left\| x_n - x_n^{(i)} \right\|^2 
&= \left\| \left( x_n -  Q_{\alpha}^{\left( i \right)} \left( x_n \right) \right)
  + \lambda_n g_n^{\left(i\right)} \right\|^2\\
&\leq 2  \left\| x_n -  Q_{\alpha}^{\left( i \right)} \left( x_n \right) \right\|^2
  + 2  \lambda_n^2 \left\| g_n^{\left(i\right)} \right\|^2\\
&\leq 2 \left( 1 - \alpha^{(i)} \right)^2 \left\| x_n -  Q^{\left( i \right)} \left( x_n \right) \right\|^2 
  + 2  M_1 \lambda_n^2,  
\end{align*}
where $M_1 := \max_{i\in {\mathcal{I}}} M_1^{{(i)}^2} < \infty$
holds from Assumption \ref{assumption0}.
Hence, for all $i\in{\mathcal{I}}$, 
\begin{align*}
\left\| x_n^{(i)} - x \right\|^2 
&\leq \left\| x_n - x \right\|^2 - 2 \alpha^{(i)}  \left( 1 - \alpha^{(i)} \right)\left\| x_n - Q^{(i)} (x_n) \right\|^2 +2 M_1 \lambda_n^2\\ 
&\quad -2  \lambda_n \left\langle x_n - x, g_n^{\left(i\right)} \right\rangle,
\end{align*}
which, together with the convexity of $\| \cdot \|^2$, implies that
\begin{align*}
\left\| x_{n+1} - x \right\|^2
&\leq \frac{1}{I} \sum_{i\in {\mathcal{I}}} \left\| x_n^{(i)} - x \right\|^2\\
&\leq \left\| x_n -x \right\|^2 
- \frac{2}{I} \sum_{i\in {\mathcal{I}}} \alpha^{\left(i \right)} \left( 1 -\alpha^{\left(i\right)} \right)
\left\| x_n - Q^{\left(i \right)} \left( x_n  \right)  \right\|^2 + 2 M_1 \lambda_n^2\\
&\quad-\frac{2  \lambda_n}{I}\sum_{i\in {\mathcal{I}}} 
\left\langle x_n - x, g_n^{\left(i\right)} \right\rangle. 
\end{align*}

(ii)
Choose $x\in X \subset \mathrm{Fix}(Q^{(i)})$ $(i\in{\mathcal{I}})$ and $n\in \mathbb{N}$ arbitrarily.
From $g_n^{(i)} \in \partial f^{(i)}(Q_\alpha^{(i)} (x_n))$ $(i\in{\mathcal{I}})$, 
$\langle x - Q_{\alpha}^{( i )} ( x_n), g_n^{(i)}  \rangle
\leq f^{(i )} (x) - f^{(i )} ( Q_{\alpha}^{( i )} ( x_n ))$.
Hence, the Cauchy-Schwarz inequality ensures that, for all $i\in{\mathcal{I}}$,
$\langle x -  x_n, g_n^{(i)} \rangle
= \langle x - Q_{\alpha}^{( i )} ( x_n ), g_n^{(i)} \rangle 
   + \langle Q_{\alpha}^{( i )} ( x_n ) - x_n, g_n^{(i)}  \rangle
\leq f^{(i )} (x) - f^{(i )} ( Q_{\alpha}^{( i )} ( x_n ) )
   + \sqrt{M_1} \| Q_{\alpha}^{( i )} ( x_n ) - x_n \|$, 
which, together with $f := \sum_{i\in {\mathcal{I}}} f^{(i)}$, implies that
\begin{align*}
\sum_{i\in {\mathcal{I}}} \left\langle x -  x_n, g_n^{\left(i\right)}  \right\rangle
&\leq f(x) - f(x_n) + \sum_{i\in {\mathcal{I}}} \left( f^{\left(i\right)}(x_n) 
  -  f^{\left(i \right)} \left( Q_{\alpha}^{\left( i \right)} \left( x_n \right) \right) \right)\\
&\quad +\sqrt{M_1} \sum_{i\in {\mathcal{I}}}  \left\| Q_{\alpha}^{\left( i \right)} \left( x_n \right) - x_n \right\|.
\end{align*}
Set $M_2 := \max_{i\in {\mathcal{I}}} M_2^{(i)}$.
Then, Assumption \ref{assumption0} ensures that $M_2 < \infty$.
Since $g\in \partial f^{(i)} (x_n)$ implies that, for all $i\in {\mathcal{I}}$,
$f^{(i)} (x_n) - f^{(i)} (Q_\alpha^{(i)} (x_n)) \leq \langle x_n - Q_\alpha^{(i)} (x_n), g \rangle
\leq M_2 \|x_n - Q_\alpha^{(i)} (x_n)\|$,
it is found that 
\begin{align*}
\sum_{i\in {\mathcal{I}}} \left\langle x -  x_n, g_n^{\left(i\right)}  \right\rangle
\leq f(x) - f(x_n) + \left( \sqrt{M_1} + M_2  \right)
\sum_{i\in {\mathcal{I}}}  \left\| Q_{\alpha}^{\left( i \right)} \left( x_n \right) - x_n \right\|.
\end{align*}
Accordingly, Lemma \ref{lem:0}(i) leads to Lemma \ref{lem:0}(ii).
This completes the proof.
\end{proof}

\subsection{Constant step-size rule}\label{subsec:3.1}
The discussion in this subsection is based on the following assumption.

\begin{assum}\label{assum:1}
User $i$ $(i\in {\mathcal{I}})$ has $(\lambda_n)_{n\in\mathbb{N}}$ satisfying
\begin{align*}
\text{{\em (C1)} } \lambda_n := \lambda  \in (0,\infty) \text{ } (n\in \mathbb{N}).
\end{align*}
\end{assum}
Let us perform a convergence analysis of Algorithm \ref{algo:1} under Assumption \ref{assum:1}.

\begin{thm}\label{thm:1}
Suppose that Assumptions (A1)--(A5), \ref{assumption0}, and \ref{assum:1} hold. Then, $(x_n)_{n\in\mathbb{N}}$ in Algorithm \ref{algo:1} satisfies the relations
\begin{align*}
&\liminf_{n\to\infty}  \left\| x_n -  Q^{\left(i \right)} \left(x_n \right)  \right\|^2 
  \leq \frac{I M_{\lambda} \lambda}{\alpha^{\left(i\right)} \left(1-\alpha^{\left(i\right)} \right)}
  \text{ } \left(i\in {\mathcal{I}} \right),\\
&\liminf_{n\to \infty} f \left(x_n \right) 
\leq f^\star  + I M_1 \lambda
 + \left( \sqrt{M_1} + M_2 \right) \sum_{i\in{\mathcal{I}}}  
  \sqrt{\frac{\left( 1 - \alpha^{\left(i\right)} \right) I M_\lambda \lambda}{\alpha^{\left(i\right)}}},
\end{align*} 
where $M_1$ and $M_2$ are as in Lemma \ref{lem:0} and, for some $x \in X$, 
$M_\lambda := \sup_{n\in\mathbb{N}} (M_1 \lambda + (1/I) |\sum_{i\in {\mathcal{I}}} \langle x - x_n, g_n^{(i)}\rangle|)$. 
\end{thm}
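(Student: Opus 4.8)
The plan is to derive both estimates from Lemma \ref{lem:0} by telescoping under the constant step size $\lambda_n \equiv \lambda$ and then passing to Ces\`aro averages. Fix $x = x^\star \in X^\star$ (nonempty by (A4)) and take $M_\lambda$ as in the statement with this $x$.

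For the first estimate I would start from Lemma \ref{lem:0}(i). The key observation is that the two error terms combine, by the very definition of $M_\lambda$, as
\begin{align*}
2 M_1 \lambda^2 - \frac{2\lambda}{I}\sum_{i\in\mathcal{I}}\left\langle x_n - x^\star, g_n^{(i)}\right\rangle
= 2\lambda\left(M_1\lambda + \frac{1}{I}\sum_{i\in\mathcal{I}}\left\langle x^\star - x_n, g_n^{(i)}\right\rangle\right) \leq 2\lambda M_\lambda.
\end{align*}
Substituting this into Lemma \ref{lem:0}(i) and rearranging gives, for every $n$,
\begin{align*}
\frac{2}{I}\sum_{i\in\mathcal{I}}\alpha^{(i)}\left(1-\alpha^{(i)}\right)\left\| x_n - Q^{(i)}(x_n)\right\|^2
\leq \left\| x_n - x^\star\right\|^2 - \left\| x_{n+1} - x^\star\right\|^2 + 2\lambda M_\lambda.
\end{align*}
Summing over $n = 0, \ldots, N-1$, the distances telescope and nonnegativity of $\|x_N - x^\star\|^2$ yields
\begin{align*}
\frac{1}{N}\sum_{n=0}^{N-1}\sum_{i\in\mathcal{I}}\alpha^{(i)}\left(1-\alpha^{(i)}\right)\left\| x_n - Q^{(i)}(x_n)\right\|^2
\leq \frac{I\left\| x_0 - x^\star\right\|^2}{2N} + I\lambda M_\lambda.
\end{align*}
Letting $N \to \infty$ and discarding all but the $i$-th (nonnegative) summand leaves the Ces\`aro average of $\alpha^{(i)}(1-\alpha^{(i)})\|x_n - Q^{(i)}(x_n)\|^2$ bounded by $I\lambda M_\lambda$; since the $\liminf$ of a nonnegative sequence never exceeds the $\limsup$ of its Ces\`aro averages, the first claimed inequality follows after dividing by $\alpha^{(i)}(1-\alpha^{(i)})$.

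For the second estimate I would proceed analogously from Lemma \ref{lem:0}(ii), using the identity $x_n - Q_\alpha^{(i)}(x_n) = (1-\alpha^{(i)})(x_n - Q^{(i)}(x_n))$ to rewrite the residual term. With $f(x^\star)=f^\star$, rearranging gives
\begin{align*}
f(x_n) - f^\star
&\leq \frac{I}{2\lambda}\left(\left\| x_n - x^\star\right\|^2 - \left\| x_{n+1} - x^\star\right\|^2\right) + I M_1 \lambda \\
&\quad + \left(\sqrt{M_1} + M_2\right)\sum_{i\in\mathcal{I}}\left(1-\alpha^{(i)}\right)\left\| x_n - Q^{(i)}(x_n)\right\|.
\end{align*}
Averaging over $n = 0, \ldots, N-1$ makes the telescoping distance term vanish as $N \to \infty$. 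The crux is to control the Ces\`aro average of $\|x_n - Q^{(i)}(x_n)\|$; here Cauchy--Schwarz bounds it by the square root of the Ces\`aro average of $\|x_n - Q^{(i)}(x_n)\|^2$, which is in turn controlled by the telescoped bound from part (i). Passing to the limit and simplifying $(1-\alpha^{(i)})\sqrt{I M_\lambda \lambda/(\alpha^{(i)}(1-\alpha^{(i)}))} = \sqrt{(1-\alpha^{(i)})I M_\lambda \lambda/\alpha^{(i)}}$ produces
\begin{align*}
\limsup_{N\to\infty}\frac{1}{N}\sum_{n=0}^{N-1}\left(f(x_n) - f^\star\right)
\leq I M_1 \lambda + \left(\sqrt{M_1} + M_2\right)\sum_{i\in\mathcal{I}}\sqrt{\frac{\left(1-\alpha^{(i)}\right)I M_\lambda \lambda}{\alpha^{(i)}}}.
\end{align*}
Since $\liminf_n f(x_n)$ is dominated by the $\limsup$ of these Ces\`aro averages, the second claim follows.

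The step I expect to require the most care is the passage from the averaged bounds back to statements about $\liminf$, and in particular the coupling in part (ii): one cannot take $\liminf$ termwise in $f(x_n)$ and in $\|x_n - Q^{(i)}(x_n)\|$ separately, because the subsequences realizing these liminfs need not agree. Working entirely with Ces\`aro averages---where Cauchy--Schwarz ties the linear residual to the squared residual already estimated in part (i)---is what circumvents this difficulty, and the final inequalities then emerge from the elementary fact that a sequence's $\liminf$ is at most the $\limsup$ of its running averages.
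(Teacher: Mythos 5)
Your proof is correct, but it follows a genuinely different route from the paper's. The paper establishes both $\liminf$ bounds by contradiction: it supposes the stated bound fails by some margin $\delta>0$, feeds this into Lemma \ref{lem:0} to show that $\|x_n-x\|^2$ decreases by a fixed positive amount at every large $n$, and contradicts nonnegativity; for the objective-value bound it additionally extracts a subsequence $(x_{n_k})$ along which $\|x_{n_k}-Q^{(i)}(x_{n_k})\|$ realizes its $\liminf$ and runs an $\epsilon$-argument. You instead telescope the same inequalities from Lemma \ref{lem:0}, bound the Ces\`aro averages of $\alpha^{(i)}(1-\alpha^{(i)})\|x_n-Q^{(i)}(x_n)\|^2$ and of $f(x_n)-f^\star$, pass from squared residuals to residuals via Cauchy--Schwarz on the averages, and finish with the elementary fact that a sequence's $\liminf$ is at most the $\limsup$ of its running averages (which, as you note, holds for any real sequence whose initial segment contributes $O(1/N)$, so the sign of $f(x_n)-f^\star$ is immaterial). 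Each approach has its merits: the paper's argument is self-contained and needs no averaging machinery, while yours is shorter, yields an explicit ergodic transient of order $\|x_0-x^\star\|^2/N$ as a byproduct, and --- as you correctly identify --- cleanly avoids the delicate point of coupling the subsequences that realize the liminfs of $f(x_n)$ and of the several residuals $\|x_n-Q^{(i)}(x_n)\|$, $i\in\mathcal{I}$, since all quantities are controlled along the full sequence of averages rather than along $i$-dependent subsequences. The only cosmetic remarks are that you should dispose of the trivial case $M_\lambda=\infty$ at the outset, as the paper does, and note that $M_1,M_2<\infty$ by Assumption \ref{assumption0}; neither affects the substance.
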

Theorem \ref{thm:1} implies that Algorithm \ref{algo:1} with a small enough $\lambda$ may approximate a solution to Problem \ref{prob:1}.

\begin{proof}
Choose $x\in X$ arbitrarily and set 
$M_\lambda := \sup_{n\in\mathbb{N}} (M_1 \lambda + (1/I) |\sum_{i\in {\mathcal{I}}} \langle x - x_n, g_n^{(i)}\rangle|)$.
It is obvious that Theorem \ref{thm:1} holds when $M_\lambda = \infty$. 
Consider the case where $M_\lambda < \infty$.
First, let us show that
\begin{align}\label{inf:1}
\liminf_{n\to\infty} \sum_{i\in {\mathcal{I}}} \alpha^{\left(i\right)} \left(  1- \alpha^{\left(i\right)} \right) 
\left\| x_n - Q^{\left(i \right)} \left(x_n \right) \right\|^2 \leq I M_\lambda \lambda.
\end{align}
Here we assume that \eqref{inf:1} does not hold. Accordingly, $\delta$ $(> 0)$ can be chosen such that
\begin{align*}
\liminf_{n\to\infty} \sum_{i\in {\mathcal{I}}} \alpha^{\left(i\right)} \left(  1- \alpha^{\left(i\right)} \right) 
\left\| x_n - Q^{\left(i \right)} \left(x_n \right) \right\|^2 > I M_\lambda \lambda + 2 \delta.
\end{align*}
The property of the limit inferior of $(\sum_{i\in {\mathcal{I}}} \alpha^{(i)} (1-\alpha^{(i)})\| x_n - Q^{(i)} (x_n) \|^2)_{n\in \mathbb{N}}$ guarantees that there exists $n_0 \in \mathbb{N}$ such that $\liminf_{n\to \infty} \sum_{i\in {\mathcal{I}}} \alpha^{(i)} (1-\alpha^{(i)})\| x_n - Q^{(i)} (x_n) \|^2 - \delta \leq \sum_{i\in {\mathcal{I}}} \alpha^{(i)} (1-\alpha^{(i)})\| x_n - Q^{(i)} (x_n) \|^2$ for all $n \geq n_0$. Accordingly, for all $n \geq n_0$,
\begin{align*}
\sum_{i\in {\mathcal{I}}} \alpha^{\left(i\right)} \left(  1- \alpha^{\left(i\right)} \right) 
\left\| x_n - Q^{\left(i \right)} \left(x_n \right) \right\|^2
> I M_\lambda \lambda + \delta.
\end{align*}
Hence, Lemma \ref{lem:0}(i) leads to the finding that, for all $n \geq n_0$,
\begin{align*}
\left\| x_{n+1} - x \right\|^2 &\leq \left\| x_n -x \right\|^2 
- \sum_{i\in {\mathcal{I}}} \frac{2 \alpha^{\left(i \right)} \left( 1 -\alpha^{\left(i\right)} \right)}{I}
\left\| x_n - Q^{\left(i \right)} \left( x_n  \right)  \right\|^2
 + 2 M_\lambda \lambda\\
&< \left\| x_n -x \right\|^2  
- \frac{2}{I} \left\{ I M_\lambda \lambda + \delta \right\} + 2 M_\lambda \lambda\\
&=  \left\| x_n -x \right\|^2 -  \frac{2}{I} \delta.
\end{align*}
Therefore, induction ensures that, for all $n \geq n_0$,
\begin{align*}
0 \leq \left\| x_{n+1} - x \right\|^2 < \left\| x_{n_0} -x \right\|^2 -  \frac{2}{I} \delta \left(n + 1 -n_{0} \right).
\end{align*}
Since the right side of this inequality approaches minus infinity as $n$ diverges, there is a contradiction. Therefore, \eqref{inf:1} holds. Since $\liminf_{n\to\infty} \alpha^{(i)} (1-\alpha^{(i)}) \| x_n - Q^{(i)} (x_n)\|^2 \leq \liminf_{n\to\infty} \sum_{i\in {\mathcal{I}}} \alpha^{(i)} (1-\alpha^{(i)}) \| x_n - Q^{(i)} (x_n)\|^2$ $(i\in {\mathcal{I}})$, there is also another finding:
\begin{align}\label{inf:2}
\liminf_{n\to\infty} \left\| x_n - Q^{\left(i \right)} \left(x_n \right) \right\|^2 \leq \frac{I M_\lambda \lambda}{\alpha^{\left(i \right)} \left(1- \alpha^{\left(i\right)} \right)} \text{ } 
\left( i\in {\mathcal{I}} \right).
\end{align}

Let $i\in {\mathcal{I}}$ be fixed arbitrarily. 
Inequality \eqref{inf:2} and the property of the limit inferior of $(\| x_n - Q^{(i)} (x_n) \|^2)_{n\in\mathbb{N}}$ guarantee the existence of a subsequence $(x_{n_k})_{k\in\mathbb{N}}$ of $(x_n)_{n\in\mathbb{N}}$ such that 
\begin{align*}
\lim_{k\to\infty} \left\| x_{n_k} - Q^{\left(i \right)} \left(x_{n_k} \right) \right\|^2 =
\liminf_{n\to\infty} \left\| x_n - Q^{\left(i \right)} \left(x_n \right) \right\|^2
\leq \frac{I M_\lambda \lambda}{\alpha^{\left(i \right)} \left(1- \alpha^{\left(i\right)} \right)}.
\end{align*}
Therefore, for all $\epsilon > 0$, there exists $k_0 \in \mathbb{N}$ such that, for all $k \geq k_0$,
\begin{align}\label{4}
\left\| x_{n_k} - Q^{\left(i \right)} \left(x_{n_k} \right) \right\| \leq 
\sqrt{\frac{I M_\lambda \lambda}{\alpha^{\left(i \right)} \left(1- \alpha^{\left(i\right)} \right)} + \epsilon}.
\end{align}
Here, it is proven that, for all $k \geq k_0$,
\begin{align}\label{3}
\liminf_{n\to\infty} f(x_n)
&\leq f^\star  + I M_1 \lambda 
+ \left( \sqrt{M_1} + M_2 \right) \sum_{i\in{\mathcal{I}}} \left\| x_{n_k} - Q_{\alpha}^{\left(i \right)} \left( x_{n_k} \right)   \right\| + 2 \epsilon.
\end{align}
Now, let us assume that \eqref{3} does not hold for all $k \geq k_0$, i.e., there exists $n_1 \in \mathbb{N}$ such that, for all $n \geq n_1$,
\begin{align*}
\liminf_{n\to\infty} f(x_n)
&> f^\star  + I M_1 \lambda 
+ \left( \sqrt{M_1} + M_2 \right) \sum_{i\in{\mathcal{I}}} \left\| x_{n} - Q_{\alpha}^{\left(i \right)} \left( x_{n} \right)   \right\| + 2 \epsilon.
\end{align*}
Assumption (A4) means the existence of $x^\star \in X$ such that $f(x^\star) = f^\star$. 
Since the property of the limit inferior of $(f(x_n) )_{n\in\mathbb{N}}$ implies the existence of $n_2 \in \mathbb{N}$ such that $\liminf_{n\to\infty} f(x_n) - \epsilon \leq f(x_n)$ for all $n \geq n_2$, it is found that, for all $n \geq n_3 := \max \{n_1, n_2\}$,
\begin{align*}
f(x_n) - f \left( x^\star  \right)
&> I M_1 \lambda 
+ \left( \sqrt{M_1} + M_2 \right) \sum_{i\in{\mathcal{I}}} \left\| x_{n} - Q_{\alpha}^{\left(i \right)} \left( x_{n} \right)   \right\| + \epsilon.
\end{align*}
Therefore, Lemma \ref{lem:0}(ii) guarantees that, for all $n \geq n_3$,
\begin{align*}
\left\| x_{n+1} - x^\star \right\|^2
&\leq \left\| x_n -x^\star \right\|^2 + 2 M_1 \lambda^2
 + \frac{2\lambda}{I} \left( f \left(x^\star \right) - f(x_n)  \right)\\
&\quad + \frac{2\lambda}{I} \left( \sqrt{M_1} +  M_2 \right) \sum_{i\in {\mathcal{I}}} 
 \left\| x_n - Q_\alpha^{\left(i \right)} \left( x_n  \right)  \right\|\\
&< \left\| x_n -x^\star \right\|^2 + 2 M_1 \lambda^2\\
&\quad - \frac{2\lambda}{I} \left\{ I M_1 \lambda 
+ \left( \sqrt{M_1} + M_2 \right) \sum_{i\in{\mathcal{I}}} \left\| x_{n} - Q_{\alpha}^{\left(i \right)} \left( x_{n} \right)   \right\| + \epsilon \right\}\\
&\quad + \frac{2\lambda}{I} \left( \sqrt{M_1} +  M_2 \right) \sum_{i\in {\mathcal{I}}} 
 \left\| x_n - Q_\alpha^{\left(i \right)} \left( x_n  \right)  \right\|\\
&=  \left\| x_n -x^\star \right\|^2 - \frac{2\lambda}{I}\epsilon\\
&< \left\| x_{n_3} - x^\star \right\|^2 - \frac{2 \lambda}{I}\epsilon \left( n+1 -n_3 \right).
\end{align*}
Since the right side of the above inequality approaches minus infinity as $n$ diverges, there is a contradiction. Thus, \eqref{3} holds for all $k \geq k_0$. 
Therefore, \eqref{4} and \eqref{3} lead to the deduction that, for all $\epsilon > 0$,
\begin{align*}
\liminf_{n\to\infty} f(x_n)
&\leq f^\star  + I M_1 \lambda\\ 
&\quad + \left( \sqrt{M_1} + M_2 \right) \sum_{i\in{\mathcal{I}}} 
\left( 1- \alpha^{\left(i\right)} \right) \sqrt{\frac{I M_\lambda \lambda}{\alpha^{\left(i \right)} \left(1- \alpha^{\left(i\right)} \right)} + \epsilon}
+ 2 \epsilon.
\end{align*}
Since $\epsilon$ $(>0)$ is arbitrary, 
\begin{align*}
\liminf_{n\to\infty} f(x_n)
\leq f^\star  + I M_1 \lambda
+ \left( \sqrt{M_1} + M_2 \right) \sum_{i\in{\mathcal{I}}} 
 \sqrt{\frac{\left( 1- \alpha^{\left(i\right)} \right) I M_\lambda \lambda}{\alpha^{\left(i \right)}}}.
\end{align*}
This completes the proof.
\end{proof}

\subsection{Diminishing step-size rule}\label{subsec:3.2}
The discussion in this subsection is based on the following assumption.

\begin{assum}\label{assum:2}
User $i$ $(i\in {\mathcal{I}})$ has $(\lambda_n)_{n\in\mathbb{N}}$ satisfying
\begin{align*}
\text{{\em (C2)} } \lim_{n\to\infty} \lambda_n = 0   
\text{ and } \text{{\em (C3)} } \sum_{n=0}^{\infty} \lambda_n = \infty.
\end{align*}
Moreover, 
\begin{enumerate}
\item[{\em (A6)}] 
$\mathrm{Id} - Q^{(i)}$ $(i\in {\mathcal{I}})$ is demiclosed.
\end{enumerate}
\end{assum} 

An example of $(\lambda_n)_{n\in \mathbb{N}}$ satisfying (C2) and (C3) is $\lambda_n := 1/(n+1)^a$ $(n\in \mathbb{N})$, where $a\in (0,1]$.
Section \ref{sec:4} will provide an example of $Q^{(i)}$ $(i\in {\mathcal{I}})$ satisfying (A1) and (A6).

Let us perform a convergence analysis of Algorithm \ref{algo:1} under Assumption \ref{assum:2}.

\begin{thm}\label{thm:2}
Suppose that Assumptions (A1)--(A5), \ref{assum:0}, and \ref{assum:2} hold. 
Then there exists a subsequence of $(x_n)_{n\in\mathbb{N}}$ generated by Algorithm \ref{algo:1} that weakly converges to a point in $X^\star$.
Moreover, the whole sequence $(x_n)_{n\in\mathbb{N}}$ strongly converges to a unique point in $X^\star$ if one of the following holds:\footnote{Under (A4), the strict convexity of $f$ guarantees
the uniqueness of the solution to Problem \ref{prob:1} \cite[Corollary 25.15]{zeidler2b}.
If there exists an operator who manages the system, 
it is reasonable to assume that the operator has a strongly convex objective function so as to guarantee the convergence of $(x_n)_{n\in \mathbb{N}}$ in Algorithm \ref{algo:1} to the desired solution, i.e., one that makes the system stable and reliable.} 
\begin{enumerate}
\item[{\em (i)}]
One $f^{(i)}$ $(i\in \mathcal{I})$ is strongly convex.
\item[{\em (ii)}]
$H$ is finite-dimensional, and one $f^{(i)}$ $(i\in \mathcal{I})$ is strictly convex.
\end{enumerate}
\end{thm}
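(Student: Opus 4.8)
The plan is to track $\Gamma_n := \| x_n - x^\star \|^2$ for a fixed $x^\star \in X^\star$ (which exists by (A4)) and to run the two-case argument governed by Proposition \ref{mainge}, according to whether $(\Gamma_n)_{n\in\mathbb{N}}$ is eventually monotone decreasing. Assumption \ref{assum:0} guarantees that $(x_n)_{n\in\mathbb{N}}$, and hence each $(Q_\alpha^{(i)}(x_n))_{n\in\mathbb{N}}$ by quasi-nonexpansivity, is bounded, so Assumption \ref{assumption0} is in force and all subgradients $g_n^{(i)}$ and inner products $\langle x_n - x^\star, g_n^{(i)}\rangle$ are bounded. In the eventually-decreasing case $(\Gamma_n)_{n\in\mathbb{N}}$ converges, so $\Gamma_n - \Gamma_{n+1}\to 0$; feeding this into Lemma \ref{lem:0}(i) together with (C2) forces the asymptotic regularity $\lim_{n\to\infty}\| x_n - Q^{(i)}(x_n)\| = 0$ for every $i\in\mathcal{I}$. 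In the non-monotone case I would set $\tau(n)$ as in Proposition \ref{mainge}; then $\Gamma_{\tau(n)} \le \Gamma_{\tau(n)+1}$ and $\lambda_{\tau(n)}\to 0$ give, again via Lemma \ref{lem:0}(i), that $\| x_{\tau(n)} - Q^{(i)}(x_{\tau(n)})\|\to 0$.

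With asymptotic regularity in hand, the weak-cluster statement follows from demiclosedness: along any weakly convergent subsequence $x_{n_k}\rightharpoonup \bar x$, assumption (A6) applied to $\mathrm{Id}-Q^{(i)}$ with $w=0$ yields $\bar x\in\mathrm{Fix}(Q^{(i)})$ for all $i$, i.e.\ $\bar x\in X$. To identify $\bar x$ as a minimizer I would use Lemma \ref{lem:0}(ii): dividing by $\lambda_n$ and exploiting $\sum_n\lambda_n=\infty$ (C3) in a contradiction argument (as in the proof of Theorem \ref{thm:1}) gives $\liminf_n f(x_n)\le f^\star$ in the decreasing case, while in the non-monotone case Lemma \ref{lem:0}(ii) evaluated at $\tau(n)$ directly gives $\limsup_n f(x_{\tau(n)})\le f^\star$ after dividing by $\lambda_{\tau(n)}$. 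Choosing the subsequence that realizes this $f$-bound and passing to a weakly convergent sub-subsequence, the weak lower semicontinuity of the continuous convex function $f$ forces its limit $\bar x\in X$ to satisfy $f(\bar x)\le f^\star$, hence $\bar x\in X^\star$. This proves the weak subsequential convergence.

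For the strong convergence, uniqueness of $x^\star$ is guaranteed under (i) or (ii), since a strongly or strictly convex summand renders $f$ strictly convex and makes $X^\star$ a singleton. The engine is the strong-convexity inequality: fixing $v^\star\in\partial f(x^\star)$ (nonempty by Proposition \ref{prop:sub}), strong convexity of $f$ with constant $\beta$ gives $(\beta/2)\| x_m - x^\star\|^2 \le f(x_m) - f^\star - \langle v^\star, x_m - x^\star\rangle$ for every $m$. Along a subsequence with $x_m\rightharpoonup x^\star$ and $f(x_m)\to f^\star$ (produced exactly as in the previous paragraph), the right-hand side tends to $0$ because $\langle v^\star, x_m - x^\star\rangle\to 0$ by weak convergence, so $x_m\to x^\star$ strongly; under (ii) one instead simply notes that in finite dimensions the weak subsequential limit $x^\star$ is already a strong limit. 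In the non-monotone case this shows $\Gamma_{\tau(n)}\to 0$, whence $\Gamma_{\tau(n)+1}\to 0$ and $\Gamma_n\le\Gamma_{\tau(n)+1}\to 0$ by Proposition \ref{mainge}; in the eventually-decreasing case the strong subsequential limit forces the already existing limit of $(\Gamma_n)_{n\in\mathbb{N}}$ to be $0$. Either way $x_n\to x^\star$.

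The main obstacle is this last step, the passage from a weakly convergent subsequence to norm convergence of the \emph{entire} sequence. Weak convergence of a subsequence together with convergence of $(\| x_n - x^\star\|)_{n\in\mathbb{N}}$ does not by itself yield $\| x_n - x^\star\|\to 0$ in infinite dimensions, so the strong-convexity inequality must be combined precisely with the optimal-value convergence $f(x_m)\to f^\star$; and Proposition \ref{mainge} is what transfers convergence of $(\Gamma_{\tau(n)})$ to the full sequence in the non-monotone case. Care is also needed to verify that every remainder term — the $\lambda_n^2$ contribution, the coupling terms $\| x_n - Q_\alpha^{(i)}(x_n)\| = (1-\alpha^{(i)})\| x_n - Q^{(i)}(x_n)\|$, and the bounded inner products — vanishes at the correct rate once Lemma \ref{lem:0} is divided through by $\lambda_n$.
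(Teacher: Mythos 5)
Your proposal is correct and follows essentially the same route as the paper: the Maing\'e two-case dichotomy on $\|x_n-x^\star\|$ via Proposition \ref{mainge}, asymptotic regularity from Lemma \ref{lem:0}(i), the (C3)-contradiction from Lemma \ref{lem:0}(ii) to get $\liminf_n f(x_n)\le f^\star$ (resp.\ $\limsup_n f(x_{\tau(n)})\le f^\star$), demiclosedness plus weak lower semicontinuity to place weak cluster points in $X^\star$, and strong convexity to upgrade a weakly convergent, value-minimizing subsequence to norm convergence before transferring to the whole sequence. The only cosmetic difference is that you close the strong-convergence step with the subgradient inequality $(\beta/2)\|x_m-x^\star\|^2\le f(x_m)-f^\star-\langle v^\star,x_m-x^\star\rangle$, whereas the paper uses the convex-combination form of strong convexity together with weak lower semicontinuity of $f$ at $\alpha x_m+(1-\alpha)x^\star$; the two are equivalent here, and your omission of the paper's Opial-type uniqueness argument in Case 1 is harmless since the theorem only claims subsequential weak convergence in that part.
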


\begin{proof}
We consider two cases.

Case 1: 
Suppose that there exists $m_0 \in \mathbb{N}$ such that $\| x_{n+1} - x^\star \| \leq \| x_n - x^\star \|$ for all $n \geq m_0$ 
and for all $x^\star \in X^\star$. 
The existence of $\lim_{n\to\infty} \| x_n - x^\star \|$ is thus guaranteed for all $x^\star \in X^\star$. 
Hence, $(x_n)_{n\in\mathbb{N}}$ is bounded.
The quasi-nonexpansivity of $Q_\alpha^{(i)}$ $(i\in\mathcal{I})$ thus ensures that $(Q_\alpha^{(i)} (x_n))_{n\in\mathbb{N}}$ $(i\in \mathcal{I})$ is bounded.
Accordingly, Proposition \ref{prop:sub} guarantees that $M_1$ and $M_2$ defined as in Lemma \ref{lem:0} are finite.
From Lemma \ref{lem:0}(i), for all $n \geq m_0$ and for all $x^\star \in X^\star$,
\begin{align*}
&\quad \sum_{i\in{\mathcal{I}}} \frac{2 \alpha^{\left(i \right)} \left(1-\alpha^{\left(i\right)} \right)}{I}
\left\| x_n - Q^{\left(i\right)} \left(x_n\right) \right\|^2\\
&\leq \left\| x_n - x^\star \right\|^2 - \left\| x_{n+1} - x^\star \right\|^2 + 2 M_1 \lambda_n^2
 - \frac{2\lambda_n}{I} \sum_{i\in{\mathcal{I}}} \left\langle x_n - x^\star, g_n^{\left(i\right)} \right\rangle,
\end{align*}
which, together with (C2) and the boundedness of $(g_n^{(i)})_{n\in \mathbb{N}}$ $(i\in \mathcal{I})$, implies that
$\lim_{n\to\infty} (1/I)\sum_{i\in{\mathcal{I}}} 2 \alpha^{(i)}(1-\alpha^{(i)}) \| x_n - Q^{(i)} (x_n) \|^2 = 0$; i.e.,
\begin{align}\label{qi}
\lim_{n\to\infty} \left\| x_n - Q_\alpha^{\left(i\right)} \left(x_n\right) \right\| = 
\lim_{n\to\infty} \left\| x_n - Q^{\left(i\right)} \left(x_n\right) \right\| = 0 \text{ } \left(i\in{\mathcal{I}}\right).
\end{align}
Let us define, for all $n \in \mathbb{N}$ and for all $x\in X$,
\begin{align}\label{Mn}
M_n (x) := f(x_n) - f(x) - \left( \sqrt{M_1} + M_2 \right)\sum_{i\in{\mathcal{I}}}\left\| x_n - Q_\alpha^{(i)} (x_n) \right\| 
- I M_1 \lambda_n.
\end{align}
Then, Lemma \ref{lem:0}(ii) leads to the finding that, for all $n \in \mathbb{N}$ and for all $x\in X$,
\begin{align}\label{iii}
\frac{2\lambda_n}{I} M_n (x) \leq \left\| x_n - x \right\|^2 - \left\| x_{n+1} - x \right\|^2.
\end{align} 
Summing up this inequality from $n=0$ to $n=m$ $(m\in \mathbb{N})$ implies that 
$(2/I)\sum_{n=0}^m \lambda_n M_n (x) \leq \| x_0 - x \|^2 - \| x_{m+1} - x\|^2 \leq \| x_0 - x \|^2 < \infty$, so
\begin{align*}
\sum_{n=0}^\infty \lambda_n M_n (x) < \infty \text{ } \left( x\in X \right).
\end{align*}
Let us fix $x\in X$ arbitrarily.
Now, under the assumption that $\liminf_{n\to\infty} M_n (x) > 0$, 
$m_1 \in \mathbb{N}$ and $\gamma > 0$ can be chosen such that $M_n (x) \geq \gamma$ for all $n \geq m_1$. 
Accordingly, (C3) means that 
\begin{align*}
\infty = \gamma \sum_{n=m_1}^\infty \lambda_n \leq \sum_{n=m_1}^\infty \lambda_n M_n (x) < \infty,
\end{align*}
which is a contradiction. Therefore, for all $x\in X$, $\liminf_{n\to\infty} M_n (x) \leq 0$, i.e.,
\begin{align*}
\liminf_{n\to\infty}
\left\{
f(x_n) - f(x) - \left( \sqrt{M_1} + M_2 \right)\sum_{i\in{\mathcal{I}}}\left\| x_n - Q_\alpha^{(i)} (x_n) \right\| - I M_1 \lambda_n \right\} \leq 0,
\end{align*}
which, together with (C2) and \eqref{qi}, implies that
\begin{align*}
\liminf_{n\to\infty} f \left(x_n \right) \leq f(x) \text{ } \left( x\in X \right).
\end{align*}
Accordingly, there exists a subsequence $(x_{n_l})_{l\in \mathbb{N}}$ of $(x_n)_{n\in\mathbb{N}}$ such that 
\begin{align}\label{f1}
\lim_{l\to\infty} f\left( x_{n_l} \right) =  \liminf_{n\to\infty} f \left(x_n \right) \leq f(x) \text{ } \left( x\in X \right).
\end{align}
Since $(x_{n_l})_{l\in \mathbb{N}}$ is bounded, there exists 
$(x_{n_{l_m}})_{m\in \mathbb{N}}$ $(\subset (x_{n_l})_{l\in \mathbb{N}})$ such that $(x_{n_{l_m}})_{m\in \mathbb{N}}$ weakly converges to $x_* \in H$.
Hence, (A6) and \eqref{qi} ensure that $x_* \in \mathrm{Fix}(Q^{(i)})$ $(i\in{\mathcal{I}})$, i.e., 
$x_* \in X$.
Furthermore, the continuity and convexity of $f$ (see (A2)) imply that $f$ is weakly lower semicontinuous
\cite[Theorem 9.1]{b-c},
which means that $f(x_*) \leq \liminf_{m\to \infty} f(x_{n_{l_m}})$.
Therefore, \eqref{f1} leads to the finding that
\begin{align*}
f \left( x_* \right) \leq \liminf_{m\to\infty} f\left( x_{n_{l_m}} \right) = \lim_{m\to\infty} f\left( x_{n_{l_m}} \right) \leq f(x) \text{ } \left( x\in X \right), \text{ i.e., } x_* \in X^\star.
\end{align*}

Let us take another subsequence $(x_{n_{l_k}})_{k\in\mathbb{N}}$
$(\subset (x_{n_l})_{l\in \mathbb{N}})$ such that $(x_{n_{l_k}})_{k\in \mathbb{N}}$ weakly converges to 
$x_{**} \in H$.
A discussion similar to the one for obtaining $x_* \in X^\star$ guarantees that $x_{**} \in X^\star$.
Here, it is proven that $x_* = x_{**}$.
Now, let us assume that $x_* \neq x_{**}$. 
Then, the existence of $\lim_{n\to\infty} \|x_n - x^\star \|$ $(x^\star \in X^\star)$ and Opial's condition \cite[Lemma 1]{opial} imply that 
\begin{align*}
\lim_{n\to\infty} \left\| x_n - x_*  \right\| 
&= \lim_{m\to\infty} \left\| x_{n_{l_m}} - x_*  \right\|
< \lim_{m\to\infty} \left\| x_{n_{l_m}} - x_{**}  \right\|\\
&= \lim_{n\to\infty} \left\| x_{n} - x_{**}  \right\|
= \lim_{k\to\infty} \left\| x_{n_{l_k}} - x_{**}  \right\|
< \lim_{k\to\infty} \left\| x_{n_{l_k}} - x_{*}  \right\|\\
&= \lim_{n\to\infty} \left\| x_n - x_*  \right\|,
\end{align*}
which is a contradiction. Hence, $x_* = x_{**}$.
Accordingly, any subsequence of $(x_{n_l})_{l\in\mathbb{N}}$ converges weakly to $x_* \in X^\star$;
i.e., $(x_{n_l})_{l\in\mathbb{N}}$ converges weakly to $x_* \in X^\star$.
This means that $x_*$ is a weak cluster point of $(x_n)_{n\in\mathbb{N}}$ and belongs to $X^\star$.
A discussion similar to the one for obtaining $x_* = x_{**}$
guarantees that there is only one weak cluster point of $(x_n)_{n\in\mathbb{N}}$, so
we can conclude that, in Case 1, $(x_n)_{n\in\mathbb{N}}$ weakly converges to a point in $X^\star$.

Case 2: 
Suppose that $x_0^\star \in X^\star$ and $(x_{n_j})_{j\in\mathbb{N}}$ $(\subset (x_n)_{n\in\mathbb{N}})$ exist such that 
$\| x_{n_j} - x_0^\star \| < \| x_{n_j +1} - x_0^\star \|$ for all $j\in\mathbb{N}$.
Then, defining $\Gamma_n := \| x_n - x_0^\star \|$ $(n\in\mathbb{N})$ implies that 
$\Gamma_{n_j} < \Gamma_{n_j +1}$ for all $j\in \mathbb{N}$.
Assumption \ref{assum:0} and the definition of $x_n$ $(n\in\mathbb{N})$ guarantee the boundedness of $(x_n)_{n\in\mathbb{N}}$.
Moreover, from the quasi-nonexpansivity of $Q_\alpha^{(i)}$ $(i\in\mathcal{I})$, $(Q_\alpha^{(i)}(x_n))_{n\in\mathbb{N}}$ $(i\in\mathcal{I})$ is also bounded.
Accordingly, Proposition \ref{prop:sub} ensures that $M_1, M_2 < \infty$.
Proposition \ref{mainge} ensures the existence of $m_1 \in \mathbb{N}$ such that $\Gamma_{\tau(n)} < \Gamma_{\tau(n)+1}$ for all $n \geq m_1$,
where $\tau(n)$ is defined as in Proposition \ref{mainge}.
Lemma \ref{lem:0}(i) means that, for all $n \geq m_1$, 
\begin{align*}
&\quad \sum_{i\in {\mathcal{I}}} \frac{2 \alpha^{\left(i \right)} \left( 1 - \alpha^{\left(i \right)} \right)}{I}
\left\| x_{\tau(n)} - Q^{\left(i \right)} \left(x_{\tau(n)} \right) \right\|^2\\ 
&\leq \Gamma_{\tau(n)}^2  - \Gamma_{\tau(n)+1}^2 + 2 M_1 \lambda_{\tau(n)}^2
 - \frac{2\lambda_{\tau(n)}}{I} \sum_{i\in {\mathcal{I}}} \left\langle x_{\tau(n)} - x_0^\star, g_{\tau(n)}^{\left(i\right)} \right\rangle\\ 
&<  \left( 2 M_1 \lambda_{\tau(n)} - \frac{2}{I} \sum_{i\in {\mathcal{I}}} \left\langle x_{\tau(n)} - x_0^\star, g_{\tau(n)}^{\left(i\right)} \right\rangle \right) \lambda_{\tau(n)}.
\end{align*}
Hence, the condition $\lim_{n\to \infty} \tau(n)= \infty$ and (C2) imply that 
\begin{align}\label{t:tau} 
\lim_{n\to\infty} \left\| x_{\tau(n)} - Q_\alpha^{\left(i\right)} \left(x_{\tau(n)} \right) \right\| = 
\lim_{n\to\infty} \left\| x_{\tau(n)} - Q^{\left(i\right)} \left(x_{\tau(n)} \right) \right\| = 0 \text{ } 
\left(i\in {\mathcal{I}} \right).
\end{align}
Since \eqref{iii} implies
$(2\lambda_{\tau(n)}/I) M_{\tau(n)} (x_0^\star) \leq \Gamma_{\tau(n)}^2 - \Gamma_{\tau(n) +1}^2 < 0$ $(n\geq m_1)$
and $\lambda_{\tau(n)} > 0$ $(n \geq m_1)$,
$ M_{\tau(n)} (x_0^\star) < 0$ $(n \geq m_1)$ holds; i.e., for all $n \geq m_1$,
\begin{align}\label{fd}
f \left(x_{\tau(n)} \right) - f^\star < \left( \sqrt{M_1} + M_2 \right)\sum_{i\in{\mathcal{I}}}
\left\| x_{\tau(n)} - Q_\alpha^{(i)} \left(x_{\tau(n)} \right) \right\| + I M_1 \lambda_{\tau(n)}.
\end{align}
Accordingly, (C2) and \eqref{t:tau} imply that  
\begin{align}\label{f2}
\limsup_{n\to\infty} f \left(x_{\tau(n)} \right) \leq f^\star.
\end{align}

Choose a subsequence $(x_{\tau(n_k)})_{k\in\mathbb{N}}$ of $(x_{\tau(n)})_{n\geq m_1}$ arbitrarily.
The property of the limit superior of $(f (x_{\tau(n)}))_{n\geq m_1}$ and \eqref{f2} guarantee that  
\begin{align}\label{f_2}
\limsup_{k\to\infty} f \left(x_{\tau(n_k)} \right) \leq \limsup_{n\to\infty} f \left(x_{\tau(n)} \right) \leq f^\star.
\end{align}
The boundedness of $(x_{\tau(n_k)})_{k\in\mathbb{N}}$ ensures that there exists $(x_{\tau(n_{k_l})})_{l\in\mathbb{N}}$ $(\subset (x_{\tau(n_k)})_{k\in\mathbb{N}})$ such that $(x_{\tau(n_{k_l})})_{l\in\mathbb{N}}$ weakly converges to $x_\star \in H$. 
Then, (A6) and \eqref{t:tau} ensure that $x_\star \in X$.
Moreover, the weakly lower semicontinuity of $f$ and \eqref{f_2} guarantee that
\begin{align*}
f \left(x_\star \right) \leq \liminf_{l\to\infty} f\left(x_{\tau\left(n_{k_l}\right)}\right) 
\leq \limsup_{l \to \infty} f\left(x_{\tau \left(n_{k_l} \right)}   \right)
\leq f^\star; \text{ i.e., } x_\star \in X^\star.
\end{align*}
Therefore, $(x_{\tau(n_{k_l})})_{l\in\mathbb{N}}$ weakly converges to $x_\star \in X^\star$.
From Cases 1 and 2, there exists a subsequence of $(x_n)_{n\in \mathbb{N}}$ that weakly converges to a point in $X^\star$.

Suppose that assumption (i) in Theorem \ref{thm:2} holds.
Since $f := \sum_{i\in \mathcal{I}} f^{(i)}$ is strongly convex, $X^\star$ consists of 
one point, denoted by $x^\star$.
In Case 1, the strong convexity of $f$ guarantees that there exists $\beta > 0$ such that, for all $\alpha \in (0,1)$ and for all $l \in \mathbb{N}$, 
\begin{align*}
\frac{\beta}{2} \alpha \left(1-\alpha \right) \left\|x_{n_l} - x^\star \right\|^2
\leq \alpha f \left(x_{n_l}\right) + \left(1-\alpha \right) f^\star 
       - f \left(\alpha x_{n_l} + \left(1-\alpha \right) x^\star \right).
\end{align*}
Accordingly, from the existence of $\lim_{n\to\infty}\|x_{n} - x^\star\|$ 
and \eqref{f1},
\begin{align*}
\frac{\beta}{2} \alpha \left(1-\alpha \right) \lim_{l\to \infty}
\left\|x_{n_l} - x^\star \right\|^2
&\leq \lim_{l\to\infty} \left(\alpha f \left(x_{n_l}\right) + \left(1-\alpha \right) f^\star \right)\\
&\quad + \limsup_{l\to\infty} 
 \left(- f \left(\alpha x_{n_l} + \left(1-\alpha \right) x^\star \right) \right)\\
&\leq f^\star - \liminf_{l\to\infty}
  f \left(\alpha x_{n_l} + \left(1-\alpha \right) x^\star \right),
\end{align*}
which, together with the weak convergence of $(x_{n_l})_{l\in\mathbb{N}}$ to $x^\star$
and the weakly lower semicontinuity of $f$, implies that
\begin{align*}
\frac{\beta}{2} \alpha \left(1-\alpha \right) \lim_{l\to \infty}
\left\|x_{n_l} - x^\star \right\|^2
\leq f^\star - f \left(\alpha x^\star + \left(1-\alpha \right) x^\star \right)
= 0.
\end{align*}
That is, $(x_{n_l})_{l\in \mathbb{N}}$ strongly converges to $x^\star$.
Therefore, from \cite[Theorem 5.11]{b-c}, the whole sequence $(x_n)_{n\in\mathbb{N}}$ strongly converges to $x^\star$.

In Case 2, the strong convexity of $f$ leads to the deduction that, 
for all $\alpha \in (0,1)$ and for all $l\in \mathbb{N}$,
\begin{align*}
\frac{\beta}{2} \alpha \left(1-\alpha \right) \limsup_{l\to \infty}
\left\|x_{\tau(n_{k_l})} - x^\star \right\|^2
&\leq \alpha \limsup_{l\to\infty} f \left(x_{\tau(n_{k_l})}\right) + \left(1-\alpha \right) f^\star\\
&\quad - \liminf_{l\to\infty}
  f \left(\alpha x_{\tau(n_{k_l})} + \left(1-\alpha \right) x^\star \right).
\end{align*}
The weak convergence of $(x_{\tau(n_{k_l})})_{l\in \mathbb{N}}$ to $x^\star$,
the weakly lower semicontinuity of $f$, and \eqref{f_2} imply that 
\begin{align*}
\frac{\beta}{2} \alpha \left(1-\alpha \right) \limsup_{l\to \infty}
\left\|x_{\tau(n_{k_l})} - x^\star \right\|^2
&\leq  f^\star
- f \left(\alpha x^\star + \left(1-\alpha \right) x^\star \right) = 0,
\end{align*}
which implies that $(x_{\tau(n_{k_l})})_{l\in \mathbb{N}}$ strongly converges to 
$x^\star$.

When another subsequence $(x_{\tau(n_{k_m})})_{m\in \mathbb{N}}$ $(\subset (x_{\tau(n_{k})})_{k\in \mathbb{N}})$ can be chosen,
a discussion similar to the one for showing the weak convergence of $(x_{\tau(n_{k_l})})_{l\in \mathbb{N}}$ to a point in $X^\star$ guarantees that 
$(x_{\tau(n_{k_m})})_{m\in \mathbb{N}}$ also weakly converges to a point in $X^\star$.
Furthermore, a discussion similar to the one for showing the strong convergence 
of $(x_{\tau(n_{k_l})})_{l\in \mathbb{N}}$ to $x^\star$ ensures that 
$(x_{\tau(n_{k_m})})_{m\in \mathbb{N}}$ strongly converges to the same $x^\star$.
Hence, it is guaranteed that $(x_{\tau(n_k)})_{k\in \mathbb{N}}$ strongly converges to 
$x^\star$.
Since $(x_{\tau(n_k)})_{k\in \mathbb{N}}$ is an arbitrary subsequence of $(x_{\tau(n)})_{n \geq m_1}$,
$(x_{\tau(n)})_{n \geq m_1}$ strongly converges to $x^\star$; i.e., 
$\lim_{n\to\infty} \Gamma_{\tau(n)} = \lim_{n\to\infty} \| x_{\tau(n)} - x^\star \|=0$.
Accordingly, Proposition \ref{mainge} ensures that
\begin{align*}
\limsup_{n \to \infty} \left\| x_n - x^\star  \right\| \leq \limsup_{n\to \infty} \Gamma_{\tau(n)+1} = 0,
\end{align*}
which implies that, in Case 2, the whole sequence $(x_n)_{n\in\mathbb{N}}$ converges to $x^\star$.

Suppose that assumption (ii) in Theorem \ref{thm:2} holds.
Let $x^\star \in X^\star$ be the unique solution to Problem \ref{prob:1}.
In Case 1, it is guaranteed that $(x_n)_{n\in\mathbb{N}}$ converges to $x^\star \in X^\star$.
In Case 2, the convergence of $(x_{\tau(n_{k_l})})_{l\in\mathbb{N}}$ to $x^\star$ 
is guaranteed.
A discussion similar to the one for showing the strong convergence of 
$(x_{\tau(n)})_{n \geq m_1}$ to $x^\star$ (see the above paragraph) ensures that 
$(x_{\tau(n)})_{n \geq m_1}$ converges to $x^\star \in X^\star$.
Proposition \ref{mainge} thus leads to the convergence of the whole sequence $(x_n)_{n\in\mathbb{N}}$ to $x^\star$.
This completes the proof.
\end{proof}

\subsection{Convergence rate analysis of Algorithm \ref{algo:1} with diminishing step size}\label{subsec:3.3}
The following corollary establishes the rate of convergence of Algorithm \ref{algo:1}
for unconstrained nonsmooth convex optimization.

\begin{cor}\label{cor:1}
Consider Problem \ref{prob:1} when $Q^{(i)} = \mathrm{Id}$ $(i\in \mathcal{I})$
and suppose that the assumptions in Theorem \ref{thm:2} hold.
Then, for a large enough $n \in \mathbb{N}$,
\begin{align*}
f(x_n) - f^\star \leq I M_1 \lambda_n,
\end{align*}
where $M_1 := \max_{i\in \mathcal{I}} M_1^{{(i)}^2} < \infty$ 
and $M_1^{(i)}$ $(i\in \mathcal{I})$ is defined as in Assumption \ref{assumption0}.
\end{cor}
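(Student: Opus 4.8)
The plan is to specialize the machinery already built for Theorem \ref{thm:2} to the trivial mapping. Since $Q^{(i)}=\mathrm{Id}$ forces $Q_\alpha^{(i)}=\alpha^{(i)}\mathrm{Id}+(1-\alpha^{(i)})\mathrm{Id}=\mathrm{Id}$, every term $\|x_n-Q_\alpha^{(i)}(x_n)\|$ occurring in Lemma \ref{lem:0}(ii) vanishes, and $X=\bigcap_{i\in\mathcal{I}}\mathrm{Fix}(\mathrm{Id})=H$, so $f^\star=\inf_H f$ and $f(x_n)\geq f^\star$ for every $n$. Under the hypotheses of Theorem \ref{thm:2}, Assumption \ref{assum:0} together with Proposition \ref{prop:sub} give $M_1<\infty$, (A4) supplies $x^\star\in X^\star$ with $f(x^\star)=f^\star$, and $(x_n)_{n\in\mathbb{N}}$ converges to a point of $X^\star$. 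First I would insert $x=x^\star$ into Lemma \ref{lem:0}(ii); after the two simplifications it collapses to
\[
\frac{2\lambda_n}{I}\bigl(f(x_n)-f^\star\bigr)\leq \|x_n-x^\star\|^2-\|x_{n+1}-x^\star\|^2+2M_1\lambda_n^2 .
\]

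The key observation is that the desired estimate $f(x_n)-f^\star\leq IM_1\lambda_n$ is exactly $\tfrac{2\lambda_n}{I}(f(x_n)-f^\star)\leq 2M_1\lambda_n^2$, which by the displayed inequality holds at every index $n$ for which $\|x_n-x^\star\|\leq\|x_{n+1}-x^\star\|$, i.e.\ at every step where the distance to $x^\star$ fails to decrease. I would therefore locate such indices for arbitrarily large $n$ by reusing the construction in Proposition \ref{mainge} from Case 2 of the proof of Theorem \ref{thm:2}: putting $\Gamma_n:=\|x_n-x^\star\|$, the indices $\tau(n)$ satisfy $\Gamma_{\tau(n)}\leq\Gamma_{\tau(n)+1}$ with $\tau(n)\to\infty$. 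Feeding $n=\tau(n)$ into the displayed inequality then yields $f(x_{\tau(n)})-f^\star\leq IM_1\lambda_{\tau(n)}$, which is precisely the specialization of \eqref{fd}, so the bound holds along a sequence of indices tending to infinity.

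The hard part is that the reduction above only controls $f(x_n)-f^\star$ at the non-descent steps of $\Gamma_n$: at a step of strict decrease the nonnegative quantity $\|x_n-x^\star\|^2-\|x_{n+1}-x^\star\|^2$ sits on the wrong side and cannot be discarded, so the clean per-iterate inequality cannot be forced at every large $n$. Consequently the assertion is intrinsically one about the subsequence $(\tau(n))$, and the eventually-monotone alternative (Case 1, where $\Gamma_n$ decreases and Proposition \ref{mainge} does not apply) must be treated on its own. There I would fall back on the summability argument of Theorem \ref{thm:2}, namely that the partial sums of $\sum_n\lambda_n M_n(x^\star)$ are bounded above while $\sum_n\lambda_n=\infty$ by (C3), which forces $\liminf_{n\to\infty}\{f(x_n)-f^\star-IM_1\lambda_n\}\leq 0$ and hence recovers the bound along a subsequence. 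Reconciling the two cases into the single claim ``for a large enough $n$'' is the delicate bookkeeping step, and I expect it to be the main obstacle in writing the proof rigorously.
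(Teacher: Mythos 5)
Your reduction of Lemma \ref{lem:0}(ii) to
\[
\frac{2\lambda_n}{I}\bigl(f(x_n)-f^\star\bigr)\le \|x_n-x^\star\|^2-\|x_{n+1}-x^\star\|^2+2M_1\lambda_n^2
\]
and your treatment of Case 2 via $\tau(n)$ and the specialization of \eqref{fd} match the paper. The genuine gap is exactly where you flag it: in Case 1 (the eventually Fej\'er-monotone alternative) your summability argument only yields $\liminf_{n\to\infty}\{f(x_n)-f^\star-IM_1\lambda_n\}\le 0$, i.e.\ the bound along a subsequence, whereas the corollary asserts it for all sufficiently large $n$. The ingredient you never invoke is the \emph{conclusion} of Theorem \ref{thm:2} rather than just its machinery: since the corollary inherits hypothesis (i) or (ii) of Theorem \ref{thm:2}, $X^\star=\{x^\star\}$ and the whole sequence $(x_n)_{n\in\mathbb{N}}$ converges strongly to $x^\star$, so by continuity of $f$ (assumption (A2)) one has $f(x_n)\to f^\star$ and hence $M_n(x^\star)=f(x_n)-f^\star-IM_1\lambda_n\to 0$. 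The paper uses this convergence to argue, by contradiction, that $M_n(x^\star)\le 0$ for all $n\ge k_1$, which with $Q^{(i)}=\mathrm{Id}$ is precisely the claimed inequality. So the missing idea is not ``delicate bookkeeping between the two cases'' but the use of the strong convergence of $(x_n)$ to upgrade your $\liminf$ statement in Case 1.

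Two further remarks. First, your worry about ``reconciling'' the cases is moot: they are mutually exclusive alternatives, and one simply proves the estimate in each. Second, be aware that the paper's own closing step in Case 1 is itself loose --- the negation of ``$M_n(x^\star)\le 0$ eventually'' produces a subsequence with $M_{n_j}(x^\star)>0$, not a uniform $\gamma>0$, and $M_n(x^\star)\to 0$ alone does not force $M_n(x^\star)\le 0$ eventually --- so even after importing the strong-convergence argument you should present that deduction more carefully than the paper does. Likewise note that in Case 2 both you and the paper obtain the inequality only at the indices $\tau(n)$, which is how the qualifier ``for a large enough $n$'' must be read there.
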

The larger the number of users $I$, the greater the $M_1 := \max_{i\in \mathcal{I}} M_1^{{(i)}^2}$.
Accordingly, Corollary \ref{cor:1} implies that, 
when the same step size sequence is used, the efficiency of Algorithm \ref{algo:1} 
with $Q^{(i)} = \mathrm{Id}$ $(i\in \mathcal{I})$ may decrease as the number of users 
$I$ increases.
 
\begin{proof}
In Case 1 in the proof of Theorem \ref{thm:2}, $\liminf_{n\to \infty} M_n(x^\star) \leq 0$ holds, where 
$M_n(x)$ $(n\in \mathbb{N}, x\in H)$ is defined by \eqref{Mn} and $\{x^\star\} = X^\star$.
Let us prove that there exists $k_1 \in \mathbb{N}$ such that, for all $n \geq k_1$,
$M_n(x^\star) \leq 0$.
If this assertion does not hold, there exist $\gamma > 0$ and a subsequence 
$(M_{n_j}(x^\star))_{j\in \mathbb{N}}$ $(\subset (M_n(x^\star))_{n\in\mathbb{N}})$ such that 
$\gamma < M_{n_j}(x^\star)$ for all $j\in \mathbb{N}$. 
Since Theorem \ref{thm:2} implies that $(x_n)_{n\in \mathbb{N}}$ strongly converges to $x^\star$, $0 < \gamma \leq \lim_{j\to \infty} M_{n_j}(x^\star) \leq 0$, which is a contradiction. 
Hence, $M_n(x^\star) \leq 0$ $(n \geq k_1)$.
Since $Q^{(i)} = \mathrm{Id}$ $(i\in \mathcal{I})$ implies that $\| x_n - Q_\alpha^{(i)}(x_n) \| = 0$ $(i\in \mathcal{I}, n\in \mathbb{N})$,
we have that, for all $n \geq k_1$,
$M_n(x^\star) = f(x_n) - f^\star - I M_1 \lambda_n \leq 0$.

In Case 2 in the proof of Theorem \ref{thm:2}, the condition $Q^{(i)} = \mathrm{Id}$ $(i\in \mathcal{I})$ 
and \eqref{fd} lead to the existence of $k_2 \in \mathbb{N}$ such that,
for all $n \geq k_2$, $f(x_{\tau(n)}) - f^\star < I M_1 \lambda_{\tau(n)}$.
This completes the proof. 
\end{proof}

The following provides the rate of convergence of Algorithm \ref{algo:1}
for constrained nonsmooth convex optimization under specific conditions.

\begin{cor}\label{cor:2}
Suppose that the assumptions in Theorem \ref{thm:2} hold. 
If there exists $\beta^{(i)} > 0$ $(i\in \mathcal{I})$ 
such that $\alpha^{(i)} > \beta^{{(i)}^2}/(\beta^{{(i)}^2} + 2)$ and 
$\mathrm{d}(x_n, X) := \| x_n - P_X (x_n) \| \leq \beta^{(i)} \|x_n - Q_{\alpha}^{(i)}(x_n)\|$ $(i\in \mathcal{I}, n\in\mathbb{N})$ and if $(\|x_n - Q^{(i)}(x_n)\|)_{n\in \mathbb{N}}$ $(i\in \mathcal{I})$ is monotone decreasing,
then, for all $i\in \mathcal{I}$ and for all $n\in \mathbb{N}$,
\begin{align*}
&\left\| x_n - Q^{(i)}(x_n)  \right\|^2
\leq \frac{I \left( \mathrm{d} \left(x_0, X \right)^2 + 3 M_1 \sum_{k=0}^n \lambda_k^2 \right)}{\left( 1 - \alpha^{(i)} \right) \left\{ \left(\beta^{{(i)}^2} + 2 \right) \alpha^{(i)} - \beta^{{(i)}^2} \right\} \left(n+1\right)},
\end{align*}
where $(\lambda_n)_{n\in\mathbb{N}}$ satisfies $\sum_{n=0}^\infty \lambda_n^2 < \infty$,
$M_1 := \max_{i\in \mathcal{I}} M_1^{{(i)}^2} < \infty$, and 
$M_1^{(i)}$ $(i\in \mathcal{I})$ is defined as in Assumption \ref{assumption0}.
Moreover, for a large enough $n\in \mathbb{N}$,
\begin{align*}
f(x_n) - f^\star
\leq I \left\{ \left(\sqrt{M_1} + M_2 \right) \sqrt{\frac{I M_3}{n+1}} + M_1 \lambda_n \right\},
\end{align*}
where $M_2 := \max_{i\in \mathcal{I}} M_2^{(i)} < \infty$,
$M_2^{(i)}$ $(i\in \mathcal{I})$ is defined as in Assumption \ref{assumption0},
$M_3 := \max_{i\in \mathcal{I}} M_3^{(i)} < \infty$,
and $M_{3}^{(i)} := (\mathrm{d}(x_0,X)^2 + 3 M_1 \sum_{k=0}^\infty \lambda_k^2)/
(( 1 - \alpha^{(i)}) \{ (\beta^{{(i)}^2} + 2) \alpha^{(i)} - \beta^{{(i)}^2} \})$ $(i\in \mathcal{I})$.
\end{cor}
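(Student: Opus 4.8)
The plan is to track the squared distance $\mathrm{d}(x_n,X)^2=\|x_n-P_X(x_n)\|^2$ to the constraint set rather than the distance to a fixed minimizer. I would start from Lemma \ref{lem:0}(i), which holds for every $x\in X$, and insert the \emph{moving} anchor $x=P_X(x_n)\in X$. Since $P_X(x_{n+1})$ is the point of $X$ nearest to $x_{n+1}$, one has $\mathrm{d}(x_{n+1},X)^2\le\|x_{n+1}-P_X(x_n)\|^2$, so, writing $a_n^{(i)}:=\|x_n-Q^{(i)}(x_n)\|$, Lemma \ref{lem:0}(i) yields
\begin{align*}
\mathrm{d}(x_{n+1},X)^2
&\le \mathrm{d}(x_n,X)^2-\frac{2}{I}\sum_{i\in\mathcal{I}}\alpha^{(i)}\left(1-\alpha^{(i)}\right)\left(a_n^{(i)}\right)^2\\
&\quad +2M_1\lambda_n^2-\frac{2\lambda_n}{I}\sum_{i\in\mathcal{I}}\left\langle x_n-P_X(x_n),g_n^{(i)}\right\rangle .
\end{align*}

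The main work, and the step I expect to be the crux, is to control the inner-product term and turn the whole right-hand side into a telescoping quantity plus an $O(\lambda_n^2)$ remainder. By the Cauchy--Schwarz inequality and $\|g_n^{(i)}\|\le M_1^{(i)}\le\sqrt{M_1}$, each summand is bounded by $\mathrm{d}(x_n,X)\sqrt{M_1}$, so the cross term is at most $2\lambda_n\sqrt{M_1}\,\mathrm{d}(x_n,X)$. Young's inequality $2\lambda_n\sqrt{M_1}\,\mathrm{d}(x_n,X)\le M_1\lambda_n^2+\mathrm{d}(x_n,X)^2$ then produces the extra $M_1\lambda_n^2$ that upgrades $2M_1\lambda_n^2$ to the advertised $3M_1\lambda_n^2$. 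The leftover $\mathrm{d}(x_n,X)^2$ must next be reabsorbed into the negative fixed-point residuals: using $x_n-Q_\alpha^{(i)}(x_n)=(1-\alpha^{(i)})(x_n-Q^{(i)}(x_n))$ together with the hypothesis $\mathrm{d}(x_n,X)\le\beta^{(i)}\|x_n-Q_\alpha^{(i)}(x_n)\|=\beta^{(i)}(1-\alpha^{(i)})a_n^{(i)}$, I square and average over $i$ to get $\mathrm{d}(x_n,X)^2\le\frac{1}{I}\sum_{i\in\mathcal{I}}\beta^{{(i)}^2}(1-\alpha^{(i)})^2(a_n^{(i)})^2$. Combining this with the residual sum collapses the coefficient of $(a_n^{(i)})^2$ to $-\frac{1}{I}(1-\alpha^{(i)})\{(\beta^{{(i)}^2}+2)\alpha^{(i)}-\beta^{{(i)}^2}\}$, which is negative precisely because $\alpha^{(i)}>\beta^{{(i)}^2}/(\beta^{{(i)}^2}+2)$. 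This gives
\begin{align*}
\frac{1}{I}\sum_{i\in\mathcal{I}}(1-\alpha^{(i)})\left\{\left(\beta^{{(i)}^2}+2\right)\alpha^{(i)}-\beta^{{(i)}^2}\right\}\left(a_n^{(i)}\right)^2
\le \mathrm{d}(x_n,X)^2-\mathrm{d}(x_{n+1},X)^2+3M_1\lambda_n^2 .
\end{align*}

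I would then sum this over $n=0,\dots,N$; the distance terms telescope to $\mathrm{d}(x_0,X)^2-\mathrm{d}(x_{N+1},X)^2\le\mathrm{d}(x_0,X)^2$, and $\sum_n\lambda_n^2$ is finite by hypothesis. Fixing one index $i$ and discarding the remaining nonnegative terms, the monotone decrease of $(a_n^{(i)})_{n\in\mathbb{N}}$ gives $(N+1)(a_N^{(i)})^2\le\sum_{n=0}^N(a_n^{(i)})^2$, and dividing by the positive coefficient yields the first displayed bound with $\sum_{k=0}^N\lambda_k^2$ in the numerator. In particular $(a_n^{(i)})^2\le IM_3^{(i)}/(n+1)\le IM_3/(n+1)$, obtained by replacing $\sum_{k=0}^N\lambda_k^2$ with its finite supremum $\sum_{k=0}^\infty\lambda_k^2$ in the definition of $M_3^{(i)}$.

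For the function-value estimate I would reuse the quantity $M_n(x^\star)$ from \eqref{Mn}. Exactly as in the proof of Corollary \ref{cor:1}, the strong convergence of $(x_n)_{n\in\mathbb{N}}$ to $x^\star$ guaranteed by Theorem \ref{thm:2}, combined with $\|x_n-Q_\alpha^{(i)}(x_n)\|=(1-\alpha^{(i)})a_n^{(i)}\to0$ from the first part and the continuity of $f$, forces $M_n(x^\star)\le0$ for all large $n$; that is, $f(x_n)-f^\star\le(\sqrt{M_1}+M_2)\sum_{i\in\mathcal{I}}\|x_n-Q_\alpha^{(i)}(x_n)\|+IM_1\lambda_n$. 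Finally, inserting $\|x_n-Q_\alpha^{(i)}(x_n)\|=(1-\alpha^{(i)})a_n^{(i)}\le a_n^{(i)}\le\sqrt{IM_3/(n+1)}$ into the sum over the $I$ users gives the stated rate $f(x_n)-f^\star\le I\{(\sqrt{M_1}+M_2)\sqrt{IM_3/(n+1)}+M_1\lambda_n\}$. The only delicate point is the cross-term bookkeeping in the second paragraph, namely producing the $3M_1$ and simultaneously the exact coefficient $(\beta^{{(i)}^2}+2)\alpha^{(i)}-\beta^{{(i)}^2}$; everything after the telescoping is routine.
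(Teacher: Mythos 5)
Your proposal is correct and follows essentially the same route as the paper: the same moving anchor $P_X(x_n)$ inserted into Lemma \ref{lem:0}(i), the same Young-type bound on the cross term producing the $3M_1\lambda_n^2$, the same absorption of $\mathrm{d}(x_n,X)^2$ via the hypothesis $\mathrm{d}(x_n,X)\le\beta^{(i)}\|x_n-Q_\alpha^{(i)}(x_n)\|$ yielding the coefficient $(1-\alpha^{(i)})\{(\beta^{{(i)}^2}+2)\alpha^{(i)}-\beta^{{(i)}^2}\}$, the same telescoping-plus-monotonicity step, and the same reuse of the $M_n(x^\star)\le 0$ argument from Corollary \ref{cor:1} for the function-value rate. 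The only (immaterial) difference is that you apply Cauchy--Schwarz to the inner product before Young's inequality, whereas the paper applies $2\langle u,v\rangle\le\|u\|^2+\|v\|^2$ directly and then bounds $\|g_n^{(i)}\|$.
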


Consider the case where $\alpha^{(i)} := 1/2$ and 
$Q^{(i)} := (1/(1-\alpha^{(i)})) (P_X - \alpha^{(i)} \mathrm{Id})$ $(i\in \mathcal{I})$; 
i.e., $Q_\alpha^{(i)} = P_X$ $(i\in \mathcal{I})$.
Then, $Q^{(i)}$ $(i\in \mathcal{I})$ is nonexpansive \cite[Proposition 4.25]{b-c}. 
Moreover, $\beta^{(i)} = 1$ $(i\in \mathcal{I})$ can be chosen such that $\alpha^{(i)} = 1/2 >  \beta^{{(i)}^2}/(\beta^{{(i)}2} + 2) = 1/3$ and $\mathrm{d}(x_n,X) = \beta^{(i)} \|x_n - Q_\alpha^{(i)}(x_n)\|$ $(i\in \mathcal{I}, n\in \mathbb{N})$.
Corollary \ref{cor:2} thus implies that, if $(\| x_n - P_X (x_n)\|)_{n\in\mathbb{N}}$ is monotone decreasing, Algorithm \ref{algo:1} with $\lambda_n : =1/(n+1)$ $(n\in \mathbb{N})$ satisfies 
$f(x_n) - f^\star \leq 
I \{ (\sqrt{M_1} + M_2) \sqrt{I M_3/(n+1)}
+ M_1/(n+1) \}$. 

The rate of convergence of Algorithm \ref{algo:1} depends on the number of users $I$ 
and the step size sequence $(\lambda_n)_{n\in\mathbb{N}}$.
Since the larger the $I$, the greater the $M_1 := \max_{i\in \mathcal{I}} M_1^{{(i)}^2}$ and the $M_2 := \max_{i\in \mathcal{I}} M_2^{(i)}$,
Corollary \ref{cor:2} implies that, 
when the same step size sequence is used, the efficiency of Algorithm \ref{algo:1} 
may decrease as the number of users $I$ increases, as seen in Corollary \ref{cor:1}.
Section \ref{sec:4} presents examples such that 
$(\| x_n - Q^{(i)}(x_n) \|)_{n\in\mathbb{N}}$ generated by Algorithm \ref{algo:1}
is monotone decreasing.

\begin{proof}
Set $z_n := P_X (x_n)$ $(n\in \mathbb{N})$. 
Then, $\mathrm{d}(x_{n+1},X):= \inf_{y\in X} \|x_{n+1} - y\| \leq \| x_{n+1} - z_n\|$ $(n\in \mathbb{N})$.
Accordingly, Lemma \ref{lem:0}(i) guarantees that, for all $n\in \mathbb{N}$,
\begin{align*}
\mathrm{d}\left(x_{n+1},X \right)^2 
&\leq \mathrm{d}\left(x_{n},X \right)^2 
-\frac{2}{I} \sum_{i\in \mathcal{I}} \alpha^{(i)} \left( 1 - \alpha^{(i)} \right)
\left\| x_n - Q^{(i)}(x_n) \right\|^2
+ 2 M_1 \lambda_n^2\\
&\quad + \frac{2\lambda_n}{I} \sum_{i\in \mathcal{I}} \left\langle z_n - x_n, g_n^{(i)} \right\rangle,
\end{align*}
which implies that, for all $N\in \mathbb{N}$,
\begin{align*}
&\frac{2}{I} \sum_{n=0}^N \sum_{i\in \mathcal{I}} \alpha^{(i)} \left( 1 - \alpha^{(i)} \right) \left\| x_n - Q^{(i)}(x_n) \right\|^2\\
&\quad \leq \mathrm{d} \left(x_0, X \right)^2 - \mathrm{d} \left(x_{N+1}, X \right)^2
+ 2 M_1 \sum_{n=0}^N \lambda_n^2
+ \frac{2}{I} \sum_{n=0}^N \lambda_n \sum_{i\in \mathcal{I}} \left\langle z_n - x_n, g_n^{(i)} \right\rangle.
\end{align*}
From $2\|x\| \|y\| \leq \|x\|^2 + \|y\|^2$ $(x,y\in H)$ and the Cauchy-Schwarz inequality,
$(2/I) \sum_{n=0}^N \sum_{i\in \mathcal{I}} \langle z_n - x_n, \lambda_n g_n^{(i)} \rangle
\leq (1/I) \sum_{n=0}^N \sum_{i\in \mathcal{I}}
      (\| z_n - x_n \|^2 + \lambda_n^2 \| g_n^{(i)}\|^2)$,
which, together with the definition of $M_1$ and 
$\| x_n - z_n \| \leq \beta^{(i)} \|x_n - Q_{\alpha}^{(i)}(x_n)\|$ 
$(i\in \mathcal{I}, n\in\mathbb{N})$,
implies that 
\begin{align*}
\frac{2}{I} \sum_{n=0}^N \sum_{i\in \mathcal{I}} \left\langle z_n - x_n, \lambda_n g_n^{(i)} \right\rangle
&\leq \frac{1}{I} \sum_{n=0}^N \sum_{i\in \mathcal{I}}
      \beta^{{(i)}^2} \left\|x_n - Q_\alpha^{(i)}(x_n) \right\|^2 
      + M_1 \sum_{n=0}^N \lambda_n^2.    
\end{align*}
Accordingly, for all $N\in \mathbb{N}$,
\begin{align*}
&\frac{1}{I} \sum_{n=0}^N \sum_{i\in \mathcal{I}}  
 \left( 1 - \alpha^{(i)} \right) \left\{ \left(\beta^{{(i)}^2} + 2 \right) \alpha^{(i)} - \beta^{{(i)}^2}  \right\} 
\left\| x_n - Q^{(i)}(x_n) \right\|^2\\
&\quad \leq \mathrm{d} \left(x_0,X \right)^2 + 3 M_1 \sum_{n=0}^N \lambda_n^2.
\end{align*}
From the monotone decreasing property of $(\|x_n - Q^{(i)}(x_n)\|)_{n\in \mathbb{N}}$ $(i\in \mathcal{I})$, 
for all $j\in \mathcal{I}$ and for all $N\in \mathbb{N}$, 
\begin{align*}
&\quad \frac{\left(N+1 \right)}{I} 
\left( 1 - \alpha^{(j)} \right) \left\{ \left(\beta^{{(j)}^2} + 2 \right) \alpha^{(j)} - \beta^{{(j)}^2}  \right\} \left\| x_N - Q^{(j)}\left(x_N\right) \right\|^2\\
&\leq \frac{\left(N+1 \right)}{I}
\sum_{i\in \mathcal{I}}  
 \left( 1 - \alpha^{(i)} \right) \left\{ \left(\beta^{{(i)}^2} + 2 \right) \alpha^{(i)} - \beta^{{(i)}^2}  \right\} \left\| x_N - Q^{(i)}\left(x_N\right) \right\|^2\\
&\leq \mathrm{d} \left(x_0,X \right)^2 + 3 M_1 \sum_{n=0}^N \lambda_n^2,
\end{align*}
which implies that, for all $j\in \mathcal{I}$ and for all $N\in \mathbb{N}$,
\begin{align*}
&\left\| x_N - Q^{(j)} \left(x_N \right)  \right\|^2
\leq \frac{I \left( \mathrm{d} \left(x_0,X \right)^2 + 3 M_1 \sum_{n=0}^N \lambda_n^2 \right)}{\left( 1 - \alpha^{(j)} \right) \left\{ \left(\beta^{{(j)}^2} + 2 \right) \alpha^{(j)} - \beta^{{(j)}^2} \right\} \left(N+1\right)}.
\end{align*}

In Case 1 in the proof of Theorem \ref{thm:2},
$\liminf_{n\to\infty} M_n(x^\star) \leq 0$, where $\{ x^\star \}= X^\star$. 
A discussion similar to the one for obtaining $M_n(x^\star) \leq 0$ $(n \geq k_1)$
in the proof of Corollary \ref{cor:1} implies that
there exists $k_3 \in \mathbb{N}$ such that, for all $n \geq k_3$, 
$M_n (x^\star) = f(x_n) - f^\star 
   - (\sqrt{M_1} + M_2) \sum_{i\in \mathcal{I}} \|x_n - Q_\alpha^{(i)}(x_n)\| - IM_1 \lambda_n \leq 0$.
In Case 2 in the proof of Theorem \ref{thm:2}, \eqref{fd} leads to the existence of 
$k_4 \in \mathbb{N}$ such that, for all $n \geq k_4$,
$f(x_{\tau(n)}) - f^\star < (\sqrt{M_1} + M_2) \sum_{i\in \mathcal{I}} \|x_{\tau(n)} 
- Q_\alpha^{(i)}(x_{\tau(n)})\| + IM_1 \lambda_{\tau(n)}$.
Accordingly, for a large enough $n\in \mathbb{N}$,
\begin{align*}
f(x_n) - f^\star
&\leq \left(\sqrt{M_1} + M_2 \right) \sum_{i\in \mathcal{I}} 
\left( 1 - \alpha^{(i)} \right) \left\|x_n - Q^{(i)}(x_n) \right\| + I M_1 \lambda_n\\ 
&\leq \left(\sqrt{M_1} + M_2 \right) \sum_{i\in \mathcal{I}} 
\left( 1 - \alpha^{(i)} \right) \sqrt{\frac{I M_3}{n+1}} + I M_1 \lambda_n\\
&\leq I \left(\sqrt{M_1} + M_2 \right) 
\sqrt{\frac{I M_3}{n+1}} + I M_1 \lambda_n.
\end{align*}
This completes the proof.
\end{proof}

\section{Incremental Subgradient Method}\label{sec:3-1}
The section presents a method for solving Problem \ref{prob:1}
under the assumption that 
\begin{enumerate}
\item[(A7)]
each user can communicate with his/her neighbors,
\end{enumerate}
where user $i$'s neighbors are users $(i-1)$ and $(i+1)$ $(i\in \mathcal{I})$ and user $0$ (resp. user $(I+1)$) 
stands for user $I$ (resp. user $1$).
This assumption implies that the network considered here is a ring-shaped network in which the users form a circle and pass along messages in cyclic order.

\begin{algo}\label{algo:2}
\text{}

Step 0. User $i$ $(i\in {\mathcal{I}})$ sets $\alpha^{(i)}$ and $(\lambda_n)_{n\in\mathbb{N}} \subset (0,\infty)$.
User $1$ sets $x_0 := x_0^{(0)} \in H$. 

Step 1. User $i$ $(i\in {\mathcal{I}})$ computes $x_n^{(i)} \in H$ using
\begin{align*}
x_n^{\left( i \right)} :=   
Q_{\alpha}^{\left( i \right)} \left( x_n^{\left(i-1\right)} \right) 
  - \lambda_n g_n^{\left(i \right)}, \text{ where } 
g_n^{\left(i\right)} \in \partial f^{\left(i \right)} \left( Q_{\alpha}^{\left(i\right)} \left(x_n^{\left(i-1\right)} \right) \right).
\end{align*} 

Step 2. User $I$ sets 
\begin{align*}
x_{n+1} = x_{n+1}^{\left(0\right)} :=  x_n^{\left(I\right)}
\end{align*}
and transmits it to user $1$. 
The algorithm sets $n := n+1$ and returns to Step 1.
\end{algo}

From (A3) and (A7), user $i$ $(i\in \mathcal{I})$ can compute 
$x_n^{(i)} :=   
Q_{\alpha}^{(i)} ( x_n^{(i-1)}) - \lambda_n g_n^{(i)}$, where 
$g_n^{(i)} \in \partial f^{(i)}( Q_{\alpha}^{(i)} (x_n^{(i-1)}))$, by using 
information $x_n^{(i-1)}$ transmitted from user $(i-1)$ and its own private information.

Now, let us consider the differences between Algorithms \ref{algo:1} and \ref{algo:2}.
In Algorithm \ref{algo:1}, user $i$ computes $x_n^{(i)}$ by using $x_n \in H$, $\lambda_n \in (0,\infty)$, and its own private information $\partial f^{(i)}$ and $Q_\alpha^{(i)}$, and each point $x_n^{(i)}$ is broadcast to all users.
As a result, all users have $(x_{n+1} := (1/I) \sum_{i\in\mathcal{I}} x_n^{(i)})_{n\in\mathbb{N}}$, which strongly converges to a point in $X^\star$ (Theorem \ref{thm:2}).
In Algorithm \ref{algo:2}, user $i$ computes $x_n^{(i)}$ by using $\lambda_n \in (0,\infty)$, $\partial f^{(i)}$, $Q_\alpha^{(i)}$, and the point $x_n^{(i-1)}$ transmitted from user $(i-1)$, and point $x_n^{(i)}$ is transmitted to user $(i+1)$.
User $i$ in Algorithm \ref{algo:2} has $(x_n^{(i)})_{n\in\mathbb{N}}$, which strongly converges to a point in $X^\star$
(Theorem \ref{thm:4}).

The following assumptions are made here.

\begin{assum}\label{a8}
For all $i\in \mathcal{I}$, there exist $N_1^{(i)}, N_2^{(i)} \in \mathbb{R}$ such that 
\begin{align*}
&\sup \left\{ \left\| g \right\|  \colon g\in \partial f^{\left(i\right)} \left(Q_{\alpha}^{\left(i\right)} \left(x_n^{(i-1)}\right) \right), \text{ } n\in \mathbb{N}  \right\} \leq N_1^{(i)},\\
&\sup \left\{ \left\| g \right\|  \colon g\in \partial f^{\left(i\right)} \left(x_n \right), \text{ } n\in \mathbb{N}  \right\} \leq N_2^{(i)}.
\end{align*} 
\end{assum}
\begin{assum}\label{a8_1}
The sequence $(x_n^{(i)})_{n\in\mathbb{N}}$ $(i\in\mathcal{I})$ is bounded.
\end{assum}

From a discussion similar to the one for obtaining the relationship between Assumptions \ref{assumption0} and \ref{assum:0},
Assumption \ref{a8_1} implies Assumption \ref{a8}.
Assumption \ref{a8} is used to perform a convergence analysis of Algorithm \ref{algo:2} with a constant step-size rule (Subsection \ref{subsec:4.1}) while Assumption \ref{a8_1} is used to analyze Algorithm \ref{algo:2} with a diminishing step-size rule 
for the same reason described in Section \ref{sec:3}.
The same discussion as for \eqref{equation:0} describing the existence of a simple,
bounded, closed convex set $X^{(i)}$ $(i\in\mathcal{I})$ satisfying $X^{(i)} \supset \mathrm{Fix}(Q^{(i)})$ leads to
\begin{align*}
x_n^{(i)} := P^{(i)} \left( Q_{\alpha}^{\left( i \right)} \left( x_n^{\left(i-1\right)} \right) 
  - \lambda_n g_n^{\left(i \right)}  \right)
\end{align*} 
instead of $x_n^{(i)}$ for Algorithm \ref{algo:2}.
The boundedness of $X^{(i)}$ guarantees that Assumption \ref{a8_1} holds (see also \eqref{equation:0}).

The following lemma can be shown by referring to the proof of Lemma \ref{lem:0}.

\begin{lem}\label{lem:0-1}
Suppose that $(x_n^{(i)})_{n\in\mathbb{N}}$ $(i\in\mathcal{I})$ is the sequence generated by Algorithm \ref{algo:2}
and that Assumptions (A1)--(A4), (A7), and \ref{a8} hold. 
The following properties then hold:
\begin{enumerate}
\item[{\em (i)}] 
For all $n\in \mathbb{N}$ and for all $x\in X$,
\begin{align*}
\left\| x_{n+1} - x \right\|^2 
&\leq \left\| x_n -x \right\|^2 
- 2 \sum_{i\in {\mathcal{I}}} \alpha^{\left(i \right)} \left( 1 -\alpha^{\left(i\right)} \right)
\left\| x_n^{\left(i-1 \right)} - Q^{\left(i \right)} \left( x_n^{\left(i-1 \right)}  \right)  \right\|^2\\
&\quad + 2 I N_1 \lambda_n^2 - 2\lambda_n \sum_{i\in {\mathcal{I}}} 
\left\langle x_n^{\left(i-1 \right)} - x, g_n^{\left(i\right)} \right\rangle,
\end{align*}
where $N_1 := \max_{i\in {\mathcal{I}}} N_1^{{(i)}^2} < \infty$.
\item[{\em (ii)}]
For all $n\in \mathbb{N}$ and for all $x\in X$,
\begin{align*}
\left\| x_{n+1} - x \right\|^2 
&\leq \left\| x_n -x \right\|^2 + 2\lambda_n \left( f(x) - f(x_n)  \right)\\
&\quad + 2 I N_1 \lambda_n^2 + 2\lambda_n \bigg\{
\sqrt{N_1} \sum_{i\in {\mathcal{I}}}  \left\| Q_{\alpha}^{\left( i \right)} \left( x_n^{(i-1)} \right) - x_n^{(i-1)} \right\|\\
&\quad + N_2 \sum_{i\in {\mathcal{I}}}  \left\| Q_{\alpha}^{\left( i \right)} \left( x_n^{(i-1)} \right) - x_n \right\| \bigg\},
\end{align*}
where $N_2 := \max_{i\in {\mathcal{I}}} N_2^{(i)} < \infty$.
\end{enumerate}
\end{lem}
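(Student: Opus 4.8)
The plan is to mirror the proof of Lemma \ref{lem:0} almost line by line, the single structural change being that the averaging step (which in Algorithm \ref{algo:1} relied on the convexity of $\|\cdot\|^2$ together with $x_{n+1} = (1/I)\sum_{i\in\mathcal{I}} x_n^{(i)}$) is replaced by a telescoping sum along the incremental iterates $x_n = x_n^{(0)}, x_n^{(1)}, \ldots, x_n^{(I)} = x_{n+1}$. Throughout I fix $x\in X \subset \mathrm{Fix}(Q^{(i)})$ $(i\in\mathcal{I})$ and $n\in\mathbb{N}$.

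For part (i), I would first derive a per-user recursion bounding $\|x_n^{(i)} - x\|^2$ in terms of $\|x_n^{(i-1)} - x\|^2$. Applying the identity $-2\langle a,b\rangle = \|a-b\|^2 - \|a\|^2 - \|b\|^2$ to $2\langle x_n^{(i)} - x_n^{(i-1)} + \lambda_n g_n^{(i)}, x_n^{(i-1)} - x\rangle$ and observing that $x_n^{(i)} - x_n^{(i-1)} + \lambda_n g_n^{(i)} = Q_\alpha^{(i)}(x_n^{(i-1)}) - x_n^{(i-1)}$, Proposition \ref{mainge1}(iii) applied at the point $x_n^{(i-1)}$ bounds that inner product above by $-2(1-\alpha^{(i)})\|x_n^{(i-1)} - Q^{(i)}(x_n^{(i-1)})\|^2$. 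Combining this with $\|x_n^{(i-1)} - x_n^{(i)}\|^2 \leq 2(1-\alpha^{(i)})^2\|x_n^{(i-1)} - Q^{(i)}(x_n^{(i-1)})\|^2 + 2N_1\lambda_n^2$ (from $\|a-b\|^2 \leq 2\|a\|^2 + 2\|b\|^2$ and Assumption \ref{a8}) yields
\begin{align*}
\|x_n^{(i)} - x\|^2
&\leq \|x_n^{(i-1)} - x\|^2 - 2\alpha^{(i)}(1-\alpha^{(i)})\|x_n^{(i-1)} - Q^{(i)}(x_n^{(i-1)})\|^2\\
&\quad + 2N_1\lambda_n^2 - 2\lambda_n\langle x_n^{(i-1)} - x, g_n^{(i)}\rangle.
\end{align*}
Summing over $i\in\mathcal{I}$ telescopes the distance terms via $x_n^{(0)} = x_n$ and $x_n^{(I)} = x_{n+1}$, and the constant $2N_1\lambda_n^2$ accumulates to $2IN_1\lambda_n^2$, producing part (i).

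For part (ii), I would start from part (i) and estimate $\sum_{i\in\mathcal{I}}\langle x - x_n^{(i-1)}, g_n^{(i)}\rangle$. Since $g_n^{(i)} \in \partial f^{(i)}(Q_\alpha^{(i)}(x_n^{(i-1)}))$, splitting $\langle x - x_n^{(i-1)}, g_n^{(i)}\rangle = \langle x - Q_\alpha^{(i)}(x_n^{(i-1)}), g_n^{(i)}\rangle + \langle Q_\alpha^{(i)}(x_n^{(i-1)}) - x_n^{(i-1)}, g_n^{(i)}\rangle$, then applying the subgradient inequality to the first summand and Cauchy--Schwarz with $\|g_n^{(i)}\| \leq \sqrt{N_1}$ to the second, gives a bound in terms of $f^{(i)}(x) - f^{(i)}(Q_\alpha^{(i)}(x_n^{(i-1)}))$ plus $\sqrt{N_1}\|Q_\alpha^{(i)}(x_n^{(i-1)}) - x_n^{(i-1)}\|$. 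To recover $f(x_n) = \sum_{i\in\mathcal{I}} f^{(i)}(x_n)$, I would add and subtract $f^{(i)}(x_n)$ and bound each difference $f^{(i)}(x_n) - f^{(i)}(Q_\alpha^{(i)}(x_n^{(i-1)})) \leq N_2\|x_n - Q_\alpha^{(i)}(x_n^{(i-1)})\|$ using a subgradient of $f^{(i)}$ at $x_n$ and Assumption \ref{a8}. Substituting this estimate into the term $-2\lambda_n\sum_{i\in\mathcal{I}}\langle x_n^{(i-1)} - x, g_n^{(i)}\rangle$ of part (i) and discarding the nonpositive residual terms then yields part (ii).

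The main obstacle I anticipate is keeping track of the two distinct residual quantities forced by the incremental structure. In the parallel setting every subgradient is evaluated at $Q_\alpha^{(i)}(x_n)$, so all displacement terms collapse to $\|x_n - Q^{(i)}(x_n)\|$; here, by contrast, the $i$-th subgradient lives at $Q_\alpha^{(i)}(x_n^{(i-1)})$, and bridging back to the common anchor $x_n = x_n^{(0)}$ generates both $\|Q_\alpha^{(i)}(x_n^{(i-1)}) - x_n^{(i-1)}\|$ and $\|Q_\alpha^{(i)}(x_n^{(i-1)}) - x_n\|$, which explains the two sums appearing in (ii). Carrying these two families of terms separately, together with careful sign bookkeeping across the telescoping sum in part (i), is the delicate point; everything else is a routine transcription of the estimates used for Lemma \ref{lem:0}.
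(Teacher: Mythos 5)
Your proposal is correct and follows essentially the same route as the paper: the paper likewise obtains part (i) by replacing $x_n$ with $x_n^{(i-1)}$ in the per-user estimate of Lemma \ref{lem:0}(i) and telescoping along $x_n = x_n^{(0)}, \ldots, x_n^{(I)} = x_{n+1}$, and part (ii) by the same splitting of $\langle x - x_n^{(i-1)}, g_n^{(i)}\rangle$, the subgradient inequality, and the bound $f^{(i)}(x_n) - f^{(i)}(Q_\alpha^{(i)}(x_n^{(i-1)})) \leq N_2\|x_n - Q_\alpha^{(i)}(x_n^{(i-1)})\|$. Your remark about the two distinct residual families is exactly the point where the incremental proof diverges from the parallel one, and it is handled the same way in the paper.
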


\begin{proof}
(i) 
The sequence $(x_n^{(i)})_{n\in \mathbb{N}}$ $(i\in\mathcal{I})$ in Algorithm \ref{algo:1} is defined by
$x_n^{(i)} := Q_\alpha^{(i)} (x_n) -\lambda_n g_n^{(i)}$ while $(x_n^{(i)})_{n\in \mathbb{N}}$ $(i\in\mathcal{I})$ in Algorithm \ref{algo:2} is defined by
$x_n^{(i)} := Q_\alpha^{(i)} (x_n^{(i-1)}) -\lambda_n g_n^{(i)}$.
Hence, by replacing $x_n$ in the proof of Lemma \ref{lem:0}(i) by $x_{n}^{(i-1)}$, we find that, for all $n\in\mathbb{N}$, for all $i\in{\mathcal{I}}$, and for all $x\in X$, 
\begin{align}
\left\| x_n^{(i)} - x \right\|^2 
&\leq \left\| x_n^{(i-1)} - x \right\|^2 - 2 \alpha^{(i)}  \left( 1 - \alpha^{(i)} \right)\left\| x_n^{(i-1)} - Q^{(i)} \left(x_n^{(i-1)}\right) \right\|^2\nonumber\\ 
&\quad +2 N_1 \lambda_n^2 -2  \lambda_n \left\langle x_n^{(i-1)} - x, g_n^{\left(i\right)} \right\rangle,\label{bdd}
\end{align}
where $N_1 := \max_{i\in {\mathcal{I}}} N_1^{{(i)}^2}$ and $N_1 < \infty$ holds from Assumption \ref{a8}.
Therefore, for all $n\in\mathbb{N}$ and for all $x\in X$,
\begin{align*}
\left\| x_{n+1} - x \right\|^2
&= \left\| x_n^{(I)} - x \right\|^2\\
&\leq \left\| x_n -x \right\|^2 
- 2 \sum_{i\in {\mathcal{I}}} \alpha^{\left(i \right)} \left( 1 -\alpha^{\left(i\right)} \right)
\left\| x_n^{(i-1)} - Q^{\left(i \right)} \left( x_n^{(i-1)}  \right)  \right\|^2\\ 
&\quad + 2 I N_1 \lambda_n^2 - 2 \lambda_n\sum_{i\in {\mathcal{I}}} 
\left\langle x_n^{(i-1)} - x, g_n^{\left(i\right)} \right\rangle. 
\end{align*}

(ii)
The same discussion as in the proof of Lemma \ref{lem:0}(ii) implies that, for all $n\in\mathbb{N}$
and for all $x\in X$, 
\begin{align*}
\sum_{i\in {\mathcal{I}}} \left\langle x -  x_n^{\left(i-1\right)}, g_n^{\left(i\right)}  \right\rangle
&\leq f(x) - f(x_n) + \sum_{i\in {\mathcal{I}}} \left( f^{\left(i\right)}(x_n) 
  -  f^{\left(i \right)} \left( Q_{\alpha}^{\left( i \right)} \left( x_n^{\left(i-1\right)} \right) \right) \right)\\
&\quad +\sqrt{N_1} \sum_{i\in {\mathcal{I}}}  \left\| Q_{\alpha}^{\left( i \right)} \left( x_n^{\left(i-1\right)} \right) - x_n^{\left(i-1\right)} \right\|.
\end{align*}
Set $N_2 := \max_{i\in {\mathcal{I}}} N_2^{(i)} < \infty$.
Since $g\in \partial f^{(i)} (x_n)$ $(n\in\mathbb{N})$ implies that, for all $i\in {\mathcal{I}}$ and for all $n\in\mathbb{N}$,
$f^{(i)} (x_n) - f^{(i)} (Q_\alpha^{(i)} (x_n^{(i-1)})) \leq \langle x_n - Q_\alpha^{(i)} (x_n^{(i-1)}), g \rangle
\leq N_2 \|x_n - Q_\alpha^{(i)} (x_n^{(i-1)})\|$,
it is found that 
\begin{align*}
\sum_{i\in {\mathcal{I}}} \left\langle x -  x_n, g_n^{\left(i\right)}  \right\rangle
&\leq f(x) - f(x_n) 
+ \sqrt{N_1} \sum_{i\in {\mathcal{I}}}  \left\| Q_{\alpha}^{\left( i \right)} \left( x_n^{(i-1)} \right) - x_n^{(i-1)} \right\|\\
&\quad + N_2 \sum_{i\in {\mathcal{I}}}  \left\| Q_{\alpha}^{\left( i \right)} \left( x_n^{(i-1)} \right) - x_n \right\|.
\end{align*}
Accordingly, Lemma \ref{lem:0-1}(i) leads to Lemma \ref{lem:0-1}(ii).
This completes the proof.
\end{proof}

\subsection{Constant step-size rule}\label{subsec:4.1}
Let us perform a convergence analysis of Algorithm \ref{algo:2} with a constant step size.

\begin{thm}\label{thm:3}
Suppose that Assumptions (A1)--(A4), (A7), \ref{assum:1}, and \ref{a8} hold. Then, $(x_n)_{n\in\mathbb{N}}$ in Algorithm \ref{algo:2} satisfies the relations
\begin{align*}
&\liminf_{n\to\infty}  \left\| x_n^{\left(i-1\right)} -  Q^{\left(i \right)} \left(x_n^{\left(i-1\right)} \right)  \right\|^2 
  \leq \frac{N_\lambda \lambda}{\alpha^{\left(i\right)} \left(1-\alpha^{\left(i\right)} \right)}
  \text{ } \left(i\in {\mathcal{I}} \right),\\
&\liminf_{n\to \infty} f \left(x_n \right) 
\leq f^\star  + I \left( \frac{I\sqrt{N_1} N_2}{2} + N_1 \right) \lambda
+N_2 \sum_{i\in{\mathcal{I}}} \sum_{j=1}^{i-1} 
\sqrt{\frac{\left(1-\alpha^{(j)}\right) N_\lambda \lambda}{\alpha^{\left(j \right)}}}\\
&\qquad\qquad\qquad\quad + \left( \sqrt{N_1} + N_2\right) \sum_{i\in{\mathcal{I}}} \sqrt{\frac{\left(1-\alpha^{(j)}\right) N_\lambda \lambda}{\alpha^{\left(i \right)}}},
\end{align*} 
where $N_1$ and $N_2$ are as in Lemma \ref{lem:0-1} and, for some $x \in X$, 
$N_\lambda := \sup_{n\in\mathbb{N}} (I N_1 \lambda + |\sum_{i\in {\mathcal{I}}} \langle x - x_n^{(i-1)}, g_n^{(i)}\rangle|)$. 
\end{thm}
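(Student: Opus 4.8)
The plan is to follow the proof of Theorem \ref{thm:1} step by step, replacing Lemma \ref{lem:0} by Lemma \ref{lem:0-1} and inserting the extra estimates forced by the layered (incremental) updates. I would begin by fixing $x\in X$ and setting $N_\lambda := \sup_{n\in\mathbb{N}}(IN_1\lambda + |\sum_{i\in\mathcal{I}}\langle x - x_n^{(i-1)}, g_n^{(i)}\rangle|)$; since the assertion is vacuous when $N_\lambda=\infty$, I assume $N_\lambda<\infty$. For the first displayed relation, I would substitute the constant step $\lambda_n\equiv\lambda$ into Lemma \ref{lem:0-1}(i) and bound its last two terms by $2\lambda N_\lambda$ through
\begin{align*}
2IN_1\lambda^2 - 2\lambda\sum_{i\in\mathcal{I}}\langle x_n^{(i-1)} - x, g_n^{(i)}\rangle
\leq 2\lambda\Big(IN_1\lambda + \big|\textstyle\sum_{i\in\mathcal{I}}\langle x - x_n^{(i-1)}, g_n^{(i)}\rangle\big|\Big)
\leq 2\lambda N_\lambda ,
\end{align*}
which reduces Lemma \ref{lem:0-1}(i) to $\|x_{n+1}-x\|^2\leq\|x_n-x\|^2-2\sum_{i\in\mathcal{I}}\alpha^{(i)}(1-\alpha^{(i)})\|x_n^{(i-1)}-Q^{(i)}(x_n^{(i-1)})\|^2+2\lambda N_\lambda$. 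Running exactly the contradiction argument behind \eqref{inf:1} (assuming the limit inferior exceeds $N_\lambda\lambda+2\delta$ forces $\|x_n-x\|^2\to-\infty$) gives $\liminf_n\sum_{i\in\mathcal{I}}\alpha^{(i)}(1-\alpha^{(i)})\|x_n^{(i-1)}-Q^{(i)}(x_n^{(i-1)})\|^2\leq N_\lambda\lambda$, and discarding all but the $i$-th summand yields the first relation.

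The genuinely new work lies in the bound for $\liminf_n f(x_n)$, where the mixed terms $\|Q_\alpha^{(i)}(x_n^{(i-1)})-x_n\|$ in Lemma \ref{lem:0-1}(ii) must be rewritten through the fixed-point residuals. Here I would use $x_n^{(j)}-x_n^{(j-1)}=(Q_\alpha^{(j)}(x_n^{(j-1)})-x_n^{(j-1)})-\lambda g_n^{(j)}$, the identity $\|Q_\alpha^{(j)}(x_n^{(j-1)})-x_n^{(j-1)}\|=(1-\alpha^{(j)})\|Q^{(j)}(x_n^{(j-1)})-x_n^{(j-1)}\|$, and $\|g_n^{(j)}\|\leq\sqrt{N_1}$ to telescope from $x_n=x_n^{(0)}$ up to layer $i-1$:
\begin{align*}
\big\|Q_\alpha^{(i)}(x_n^{(i-1)})-x_n\big\|
\leq \sum_{j=1}^{i}(1-\alpha^{(j)})\big\|Q^{(j)}(x_n^{(j-1)})-x_n^{(j-1)}\big\| + (i-1)\sqrt{N_1}\,\lambda .
\end{align*}
Substituting this, together with $\|Q_\alpha^{(i)}(x_n^{(i-1)})-x_n^{(i-1)}\|=(1-\alpha^{(i)})\|Q^{(i)}(x_n^{(i-1)})-x_n^{(i-1)}\|$, into Lemma \ref{lem:0-1}(ii), then merging the diagonal $j=i$ contribution with the $\sqrt{N_1}$ term to form the coefficient $(\sqrt{N_1}+N_2)$, keeping the off-diagonal $\sum_{i}\sum_{j=1}^{i-1}$ terms with coefficient $N_2$, and bounding $\sum_{i\in\mathcal{I}}(i-1)\leq I^2/2$ (so that the remainders $(i-1)\sqrt{N_1}\lambda$ yield $I^2\sqrt{N_1}N_2\lambda/2$, which together with $2IN_1\lambda^2$ becomes $I(I\sqrt{N_1}N_2/2+N_1)\lambda$ after division by $2\lambda$) reproduces exactly the right-hand side of the theorem.

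With these substitutions in place I would close by mimicking the nested contradiction of Theorem \ref{thm:1}: for the fixed $i$, inequality \eqref{inf:2} supplies a subsequence along which $\|x_{n_k}^{(i-1)}-Q^{(i)}(x_{n_k}^{(i-1)})\|$ approaches its limit inferior, giving the analog of \eqref{4}; then, assuming the claimed $f$-bound fails for all large $n$, Lemma \ref{lem:0-1}(ii) applied at $x^\star\in X^\star$ (which exists by (A4) with $f(x^\star)=f^\star$) forces $\|x_{n+1}-x^\star\|^2<\|x_n-x^\star\|^2-2\lambda\epsilon$ and hence $\|x_n-x^\star\|^2\to-\infty$, a contradiction, establishing the analog of \eqref{3}; finally letting $\epsilon\downarrow 0$ delivers the stated estimate. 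I expect the telescoping estimate for $\|Q_\alpha^{(i)}(x_n^{(i-1)})-x_n\|$, and the careful extraction of a single subsequence along which every layer-residual can be replaced by its own limit-inferior bound, to be the main obstacle; the remaining steps are a routine transcription of the Theorem \ref{thm:1} argument with Lemma \ref{lem:0} replaced by Lemma \ref{lem:0-1}.
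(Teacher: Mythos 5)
Your proposal is correct and follows essentially the same route as the paper's proof of Theorem \ref{thm:3}: the contradiction argument with Lemma \ref{lem:0-1}(i) for the residual bound, then the contradiction argument with Lemma \ref{lem:0-1}(ii) combined with the triangle-inequality telescoping of $\|x_n - Q_\alpha^{(i)}(x_n^{(i-1)})\|$ through the layers (yielding the $N_2\sum_i\sum_{j=1}^{i-1}$ terms and the $I^2\sqrt{N_1}N_2\lambda/2$ remainder), and finally $\epsilon\downarrow 0$. The only cosmetic difference is that you expand the mixed terms before running the second contradiction while the paper telescopes afterward along the extracted subsequence; both orderings give the stated bound.
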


\begin{proof}
Choose $x\in X$ arbitrarily 
and set $N_\lambda := \sup_{n\in\mathbb{N}} (I N_1 \lambda + |\sum_{i\in {\mathcal{I}}} \langle x - x_n^{(i-1)}, g_n^{(i)}\rangle|)$.
Since Theorem \ref{thm:3} holds when $N_\lambda = \infty$, it can be assumed that $N_\lambda < \infty$.
First, let us show that
\begin{align}\label{infe:1}
\liminf_{n\to\infty} \sum_{i\in {\mathcal{I}}} \alpha^{\left(i\right)} \left(  1- \alpha^{\left(i\right)} \right) 
\left\| x_n^{\left(i-1\right)} - Q^{\left(i \right)} \left(x_n^{\left(i-1\right)} \right) \right\|^2 \leq N_\lambda \lambda.
\end{align}
Let us assume that \eqref{infe:1} does not hold. 
Accordingly, from the same discussion as in the proof of Theorem \ref{thm:1},
$\delta$ $(> 0)$ and $n_0$ $(\in \mathbb{N})$ can be chosen such that, for all $n \geq n_0$,
$\sum_{i\in {\mathcal{I}}} \alpha^{(i)} (  1- \alpha^{(i)} ) 
\| x_n^{(i-1)} - Q^{(i )} (x_n^{(i-1)} ) \|^2 > N_\lambda \lambda 
+  \delta$.
Hence, Lemma \ref{lem:0-1}(i) leads to the finding that, for all $n \geq n_0$,
\begin{align*}
\left\| x_{n+1} - x \right\|^2 
&< \left\| x_n -x \right\|^2  
- 2 \left\{ N_\lambda \lambda + \delta \right\} + 2 N_\lambda \lambda\\
&=  \left\| x_n -x \right\|^2 -  2 \delta.
\end{align*}
Therefore, induction shows that
$0 \leq \left\| x_{n+1} - x \right\|^2 < \left\| x_{n_0} -x \right\|^2 -  2 \delta (n + 1 -n_{0} )$
$(n \geq n_0)$,
which is a contradiction. 
Therefore, \eqref{infe:1} holds. 
This means that
\begin{align}\label{infe:2}
\liminf_{n\to\infty} \left\| x_n^{\left(i-1\right)} - Q^{\left(i \right)} \left(x_n^{\left(i-1\right)} \right) \right\|^2 \leq \frac{N_\lambda \lambda}{\alpha^{\left(i \right)} \left(1- \alpha^{\left(i\right)} \right)} \text{ } 
\left( i\in {\mathcal{I}} \right).
\end{align}

Let $i\in {\mathcal{I}}$ be fixed arbitrarily. 
Inequality \eqref{infe:2} and the property of the limit inferior of $(\| x_n^{(i)} - Q^{(i)} (x_n^{(i)}) \|^2)_{n\in\mathbb{N}}$ guarantee the existence of a subsequence $(x_{n_k}^{(i)})_{k\in\mathbb{N}}$ of $(x_n^{(i)})_{n\in\mathbb{N}}$ such that 
$\lim_{k\to\infty} \| x_{n_k}^{(i-1)} - Q^{(i)} (x_{n_k}^{(i-1)}) \|^2 =
\liminf_{n\to\infty} \| x_n^{(i-1)} - Q^{(i)} (x_n^{(i-1)} ) \|^2
\leq N_\lambda \lambda/(\alpha^{(i)}(1- \alpha^{(i)}))$.
Therefore, for all $\epsilon > 0$, there exists $k_0 \in \mathbb{N}$ such that, for all $k \geq k_0$,
\begin{align}\label{4-1}
\left\| x_{n_k}^{\left(i-1\right)} - Q^{\left(i \right)} \left(x_{n_k}^{\left(i-1\right)} \right) \right\| \leq 
\sqrt{\frac{N_\lambda \lambda}{\alpha^{\left(i \right)} \left(1- \alpha^{\left(i\right)} \right)} + \epsilon}.
\end{align}
Now, let us prove that, for all $k \geq k_0$,
\begin{align}\label{3-1}
\begin{split}
\liminf_{n\to\infty} f(x_n)
&\leq f^\star  + I N_1 \lambda 
+ \sqrt{N_1} \sum_{i\in{\mathcal{I}}} \left\| x_{n_k}^{\left(i-1\right)} - Q_{\alpha}^{\left(i \right)} \left( x_{n_k}^{\left(i-1\right)} \right)   \right\|\\ 
&\quad + N_2 \sum_{i\in{\mathcal{I}}} \left\| x_{n_k} - Q_{\alpha}^{\left(i \right)} \left( x_{n_k}^{\left(i-1\right)} \right)   \right\| + 2 \epsilon.
\end{split}
\end{align}
Let us assume that \eqref{3-1} does not hold for all $k \geq k_0$. 
Then, (A5) and the property of the limit inferior of $(f(x_n))_{n\in\mathbb{N}}$ guarantee that $x^\star \in X^\star$ and $n_1 \in \mathbb{N}$ exist such that, for all $n \geq n_1$,
\begin{align*}
f(x_n) - f \left( x^\star  \right)
&> I N_1 \lambda 
+ \sqrt{N_1} \sum_{i\in{\mathcal{I}}} \left\| x_{n}^{\left(i-1\right)} - Q_{\alpha}^{\left(i \right)} \left( x_{n}^{\left(i-1\right)} \right)   \right\|\\ 
&\quad + N_2 \sum_{i\in{\mathcal{I}}} \left\| x_{n} - Q_{\alpha}^{\left(i \right)} \left( x_{n}^{\left(i-1\right)} \right)   \right\| + \epsilon.
\end{align*}
Therefore, Lemma \ref{lem:0-1}(ii) means that, for all $n \geq n_1$,
\begin{align*}
&\quad \left\| x_{n+1} - x^\star \right\|^2\\ 
&\leq \left\| x_n -x^\star \right\|^2 + 2 I N_1 \lambda^2
 + 2\lambda \left( f \left(x^\star \right) - f(x_n)  \right)\\
&\quad + 2\lambda \bigg\{ \sqrt{N_1} \sum_{i\in {\mathcal{I}}} 
 \left\| x_n^{\left(i-1\right)} - Q_\alpha^{\left(i \right)} \left( x_n^{\left(i-1\right)}  \right)  \right\|
+ N_2 \sum_{i\in {\mathcal{I}}} 
 \left\| x_n - Q_\alpha^{\left(i \right)} \left( x_n^{\left(i-1\right)}  \right)  \right\| \bigg\}\\
&< \left\| x_n -x^\star \right\|^2 + 2 I N_1 \lambda^2\\
&\quad - 2\lambda 
\bigg\{ I N_1 \lambda 
+ \sqrt{N_1} \sum_{i\in {\mathcal{I}}} 
 \left\| x_n^{\left(i-1\right)} - Q_\alpha^{\left(i \right)} \left( x_n^{\left(i-1\right)}  \right)  \right\|\\
&\quad + N_2 \sum_{i\in {\mathcal{I}}} 
 \left\| x_n - Q_\alpha^{\left(i \right)} \left( x_n^{\left(i-1\right)}  \right)  \right\|
+ \epsilon \bigg\} + 2\lambda 
\bigg\{
\sqrt{N_1} \sum_{i\in {\mathcal{I}}} 
 \left\| x_n^{\left(i-1\right)} - Q_\alpha^{\left(i \right)} \left( x_n^{\left(i-1\right)}  \right)  \right\|\\
&\quad + N_2 \sum_{i\in {\mathcal{I}}} 
 \left\| x_n - Q_\alpha^{\left(i \right)} \left( x_n^{\left(i-1\right)}  \right)  \right\|
\bigg\}\\
&=  \left\| x_n -x^\star \right\|^2 - 2\lambda \epsilon,
\end{align*}
which means that $0\leq \| x_{n+1} - x^\star \|^2 < \| x_{n_1} - x^\star \|^2 -2 \lambda \epsilon (n+1 -n_1)$ $(n\geq n_1)$. 
Hence, there is a contradiction.
Accordingly, \eqref{3-1} holds for all $k \geq k_0$.

Furthermore, the triangle inequality implies that, for all $k \geq k_0$,
\begin{align*}
&\quad \sqrt{N_1} \sum_{i\in{\mathcal{I}}} \left\| x_{n_k}^{\left(i-1\right)} - Q_{\alpha}^{\left(i \right)} \left( x_{n_k}^{\left(i-1\right)} \right)   \right\| 
 + N_2 \sum_{i\in{\mathcal{I}}} \left\| x_{n_k} - Q_{\alpha}^{\left(i \right)} \left( x_{n_k}^{\left(i-1\right)} \right)   \right\|\\
&\leq \sqrt{N_1} \sum_{i\in{\mathcal{I}}} \left\| x_{n_k}^{\left(i-1\right)} - Q_{\alpha}^{\left(i \right)} \left( x_{n_k}^{\left(i-1\right)} \right)   \right\| 
+  N_2 \sum_{i\in{\mathcal{I}}} \left\| x_{n_k}^{(0)} - x_{n_k}^{\left(i-1\right)} \right\|\\
&\quad + N_2 \sum_{i\in{\mathcal{I}}} \left\| x_{n_k}^{\left(i-1\right)} - Q_{\alpha}^{\left(i \right)} \left( x_{n_k}^{\left(i-1\right)} \right)   \right\|\\
&\leq \left( \sqrt{N_1} + N_2\right) \sum_{i\in{\mathcal{I}}} \left( 1- \alpha^{(i)} \right) \left\| x_{n_k}^{\left(i-1\right)} - Q^{\left(i \right)} \left( x_{n_k}^{\left(i-1\right)} \right)   \right\|\\
&\quad + N_2 \sum_{i\in{\mathcal{I}}} \sum_{j=1}^{i-1} \left\| x_{n_k}^{(j-1)} - x_{n_k}^{\left(j\right)} \right\|.
\end{align*}
Moreover, the definition of $x_n^{(i)}$ $(n\in \mathbb{N}, i\in \mathcal{I})$ and the triangle inequality mean that, for all $k \geq k_0$,
\begin{align*}
\sum_{i\in{\mathcal{I}}} \sum_{j=1}^{i-1} \left\| x_{n_k}^{(j-1)} - x_{n_k}^{\left(j\right)} \right\|
&\leq \sum_{i\in{\mathcal{I}}} \sum_{j=1}^{i-1} 
\left\| x_{n_k}^{(j-1)} - Q_\alpha^{\left(j\right)} \left( x_{n_k}^{\left(j-1\right)} \right)  \right\|
 +  \sum_{i\in{\mathcal{I}}} \sum_{j=1}^{i-1} \sqrt{N_1} \lambda\\ 
&=  \sum_{i\in{\mathcal{I}}} \sum_{j=1}^{i-1} 
 \left\| x_{n_k}^{(j-1)} - Q_\alpha^{\left(j\right)} \left( x_{n_k}^{\left(j-1\right)} \right)  \right\|
 + \frac{I^2 \sqrt{N_1} \lambda}{2}.
\end{align*}
Accordingly, \eqref{4-1} guarantees that
\begin{align*}
&\quad \sqrt{N_1} \sum_{i\in{\mathcal{I}}} \left\| x_{n_k}^{\left(i-1\right)} - Q_{\alpha}^{\left(i \right)} \left( x_{n_k}^{\left(i-1\right)} \right)   \right\| 
 + N_2 \sum_{i\in{\mathcal{I}}} \left\| x_{n_k} - Q_{\alpha}^{\left(i \right)} \left( x_{n_k}^{\left(i-1\right)} \right)   \right\|\\
&\leq \left( \sqrt{N_1} + N_2\right) \sum_{i\in{\mathcal{I}}} \left( 1- \alpha^{(i)} \right) \sqrt{\frac{N_\lambda \lambda}{\alpha^{\left(i \right)} \left(1- \alpha^{\left(i\right)} \right)} + \epsilon}\\
&\quad +N_2 \sum_{i\in{\mathcal{I}}} \sum_{j=1}^{i-1} 
\left(1-\alpha^{(j)}\right)\sqrt{\frac{N_\lambda \lambda}{\alpha^{\left(j \right)} \left(1- \alpha^{\left(j\right)} \right)} + \epsilon}
+ \frac{I^2 \sqrt{N_1} N_2 \lambda}{2}.
\end{align*}
Therefore, \eqref{3-1} leads to the finding that, for all $\epsilon > 0$,
\begin{align*}
\liminf_{n\to\infty} f(x_n)
&\leq f^\star  + I \left(\frac{I\sqrt{N_1}N_2}{2} + N_1 \right) \lambda\\ 
&\quad + \left( \sqrt{N_1} + N_2\right) \sum_{i\in{\mathcal{I}}} \left( 1- \alpha^{(i)} \right) \sqrt{\frac{N_\lambda \lambda}{\alpha^{\left(i \right)} \left(1- \alpha^{\left(i\right)} \right)} + \epsilon}\\
&\quad +N_2 \sum_{i\in{\mathcal{I}}} \sum_{j=1}^{i-1} 
\left(1-\alpha^{(j)}\right)\sqrt{\frac{N_\lambda \lambda}{\alpha^{\left(j \right)} \left(1- \alpha^{\left(j\right)} \right)} + \epsilon}
+ 2\epsilon.
\end{align*}
Hence, the arbitrary property of $\epsilon$ $(>0)$ leads to the deduction that 
\begin{align*}
&\quad \liminf_{n\to\infty} f(x_n)\\
&\leq f^\star  + I \left( \frac{I \sqrt{N_1} N_2}{2} + N_1 \right) \lambda
 +N_2 \sum_{i\in{\mathcal{I}}} \sum_{j=1}^{i-1} 
\left(1-\alpha^{(j)}\right)\sqrt{\frac{N_\lambda \lambda}{\alpha^{\left(j \right)} \left(1- \alpha^{\left(j\right)} \right)}}\\
&\quad + \left( \sqrt{N_1} + N_2\right) \sum_{i\in{\mathcal{I}}} \left( 1- \alpha^{(i)} \right) \sqrt{\frac{N_\lambda \lambda}{\alpha^{\left(i \right)} \left(1- \alpha^{\left(i\right)} \right)}}\\
&= f^\star  + I \left( \frac{I \sqrt{N_1} N_2}{2} + N_1 \right) \lambda\\ 
&\quad + \left( \sqrt{N_1} + N_2\right) \sum_{i\in{\mathcal{I}}} \sqrt{\frac{\left(1-\alpha^{(j)}\right) N_\lambda  \lambda}{\alpha^{\left(i \right)}}}
+N_2 \sum_{i\in{\mathcal{I}}} \sum_{j=1}^{i-1} 
\sqrt{\frac{\left(1-\alpha^{(j)}\right) N_\lambda \lambda}{\alpha^{\left(j \right)}}}.
\end{align*}
This completes the proof.
\end{proof}

\subsection{Diminishing step-size rule}\label{subsec:4.2}
Let us perform a convergence analysis of Algorithm \ref{algo:2} with a diminishing step size.

\begin{thm}\label{thm:4}
Suppose that Assumptions (A1)--(A4), (A7), \ref{assum:2}, and \ref{a8_1} hold. 
Then there exists a subsequence of $(x_n^{(i)})_{n\in\mathbb{N}}$ $(i\in\mathcal{I})$ generated by Algorithm \ref{algo:2} 
that weakly converges to a point in $X^\star$.
If either (i) or (ii) in Theorem \ref{thm:2} holds,
$(x_n^{(i)})_{n\in\mathbb{N}}$ $(i\in\mathcal{I})$ strongly converges to a unique point in $X^\star$.\footnote{Figure \ref{fig:10} shows the existence of a subsequence of $(x_n)_{n\in \mathbb{N}}$ generated by Algorithm \ref{algo:2} that converges to a solution to Problem \ref{prob:2} when all $f^{(i)}$ are convex while Figure \ref{fig:12} indicates the convergence of $(x_n)_{n\in \mathbb{N}}$ generated by Algorithm \ref{algo:2} to the solution to Problem \ref{prob:2}
when only $f^{(1)}$ is strongly convex.}
\end{thm}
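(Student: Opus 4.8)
The plan is to follow the two-case argument of Theorem \ref{thm:2} in structure, replacing Lemma \ref{lem:0} by Lemma \ref{lem:0-1} and keeping track of the fact that the subgradients are now taken at the intermediate iterates $x_n^{(i-1)}$ rather than at a common point. Assumption \ref{a8_1} makes $(x_n^{(i)})_{n\in\mathbb{N}}$ $(i\in\mathcal{I})$, hence also $(x_n)_{n\in\mathbb{N}}$ and $(Q_\alpha^{(i)}(x_n^{(i-1)}))_{n\in\mathbb{N}}$, bounded, so Proposition \ref{prop:sub} gives $N_1,N_2<\infty$. I split into Case 1 (there is $m_0$ with $\|x_{n+1}-x^\star\|\leq\|x_n-x^\star\|$ for all $n\geq m_0$ and all $x^\star\in X^\star$) and Case 2 (the opposite situation, handled through the index map $\tau$ of Proposition \ref{mainge} applied to $\Gamma_n:=\|x_n-x_0^\star\|$ for a fixed $x_0^\star\in X^\star$). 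In either case Lemma \ref{lem:0-1}(i), (C2), and telescoping of $\|x_n-x^\star\|^2$ give $\|x_n^{(i-1)}-Q^{(i)}(x_n^{(i-1)})\|\to 0$ for each $i$ (along $\tau(n)$ in Case 2). From $x_n^{(i)}=Q_\alpha^{(i)}(x_n^{(i-1)})-\lambda_n g_n^{(i)}$, the bound $\|x_n^{(i)}-x_n^{(i-1)}\|\leq(1-\alpha^{(i)})\|x_n^{(i-1)}-Q^{(i)}(x_n^{(i-1)})\|+\lambda_n N_1^{(i)}$ with (C2) yields $\|x_n^{(i)}-x_n^{(i-1)}\|\to 0$; summing along the chain $x_n^{(0)},\ldots,x_n^{(I)}$ shows all intermediate iterates are asymptotically equal, so in particular $\|x_n^{(i)}-x_n\|\to 0$ $(i\in\mathcal{I})$.

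Next I would reproduce the objective-value estimate. Setting $M_n(x):=f(x_n)-f(x)-\sqrt{N_1}\sum_{i\in\mathcal{I}}\|Q_\alpha^{(i)}(x_n^{(i-1)})-x_n^{(i-1)}\|-N_2\sum_{i\in\mathcal{I}}\|Q_\alpha^{(i)}(x_n^{(i-1)})-x_n\|-IN_1\lambda_n$, Lemma \ref{lem:0-1}(ii) becomes $2\lambda_n M_n(x)\leq\|x_n-x\|^2-\|x_{n+1}-x\|^2$. In Case 1, summing this and invoking (C3) exactly as in Theorem \ref{thm:2} forces $\liminf_n M_n(x)\leq 0$; since both correction sums tend to $0$ by the previous paragraph and $\lambda_n\to 0$, this gives $\liminf_n f(x_n)\leq f(x)$ for all $x\in X$. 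In Case 2 the sign $\Gamma_{\tau(n)}^2-\Gamma_{\tau(n)+1}^2<0$ forces $M_{\tau(n)}(x_0^\star)<0$, whence $\limsup_n f(x_{\tau(n)})\leq f^\star$.

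Then I would extract weak cluster points. Taking a subsequence $x_{n_k}\rightharpoonup x_*$ (realizing $\liminf_n f(x_n)$ in Case 1, drawn from $(x_{\tau(n)})$ in Case 2), the decisive observation is that $\|x_{n_k}^{(i-1)}-x_{n_k}\|\to 0$ forces $x_{n_k}^{(i-1)}\rightharpoonup x_*$ as well; applying the demiclosedness of $\mathrm{Id}-Q^{(i)}$ (A6) to $(x_{n_k}^{(i-1)})$, for which $\|x_{n_k}^{(i-1)}-Q^{(i)}(x_{n_k}^{(i-1)})\|\to 0$, gives $x_*\in\mathrm{Fix}(Q^{(i)})$ for every $i$, i.e.\ $x_*\in X$. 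Weak lower semicontinuity of $f$ (from (A2)) together with the $\liminf$/$\limsup$ bounds above yields $x_*\in X^\star$; since all $x_n^{(i)}$ share the same weak limit, this proves the weak-subsequential statement for each user's sequence. In Case 1, Opial's condition \cite[Lemma 1]{opial} applied to the existing limit $\lim_n\|x_n-x^\star\|$ rules out two distinct weak cluster points, so the whole sequence converges weakly to a point of $X^\star$.

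Finally, strong convergence under (i) or (ii) follows exactly as in the last part of Theorem \ref{thm:2}: strong (resp.\ strict, in finite dimension) convexity of one $f^{(i)}$ makes $f$ strongly (resp.\ strictly) convex, so $X^\star=\{x^\star\}$, and inserting the weak convergence and the value bound into the strong-convexity inequality forces $\|x_{n_l}-x^\star\|\to 0$; \cite[Theorem 5.11]{b-c} (Case 1) or Proposition \ref{mainge} (Case 2) then upgrades this to strong convergence of the whole sequence. The main obstacle, I expect, is the bookkeeping separating $x_n^{(i-1)}$, $x_n$, and $Q^{(i)}(x_n^{(i-1)})$: because $Q^{(i)}$ is only quasi-nonexpansive one cannot replace $x_n^{(i-1)}$ by $x_n$ inside $Q^{(i)}$, so the demiclosedness step must be run on the genuine iterates $(x_n^{(i-1)})$, and it is precisely the asymptotic-equality estimate $\|x_n^{(i)}-x_n^{(i-1)}\|\to 0$ that licenses transferring the weak limit from $x_n$ to $x_n^{(i-1)}$.
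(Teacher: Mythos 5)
Your proposal is correct and follows essentially the same route as the paper's proof: the same two-case split via Proposition \ref{mainge}, the same substitution of Lemma \ref{lem:0-1} for Lemma \ref{lem:0}, the same chain estimate $\|x_n^{(i)}-x_n^{(i-1)}\|\to 0$ to transfer weak limits from $x_n$ to the intermediate iterates before applying demiclosedness, and the same Opial/strong-convexity endgame. The only cosmetic difference is that you invoke Assumption \ref{a8_1} for boundedness in Case 1, whereas the paper rederives it there from the monotonicity of $\|x_n-x^\star\|$; both are valid under the stated hypotheses.
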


\begin{proof}
Case 1: 
Suppose there exists $m_0 \in \mathbb{N}$ such that 
$\| x_{n+1} - x^\star \| \leq \| x_n -x^\star \|$ for all $n \geq m_0$ and for all $x^\star \in X^\star$. 
Then, there exists $\lim_{n\to\infty} \| x_n - x^\star \|$ for all $x^\star \in X^\star$. 
Hence, $(x_n)_{n\in\mathbb{N}}$ is bounded.
From the quasi-nonexpansivity of $Q_\alpha^{(1)}$, $(Q_\alpha^{(1)}(x_n))_{n\in\mathbb{N}}$ is also bounded.
Hence, Proposition \ref{prop:sub} guarantees the boundedness of $(g_n^{(1)})_{n\in \mathbb{N}}$. 
Inequality \eqref{bdd} when $i=1$, $x_n^{(0)} := x_n$ $(n\in\mathbb{N})$, and (C2) lead to the boundedness of $(x_n^{(1)})_{n\in \mathbb{N}}$.
Therefore, induction shows that $(x_n^{(i)})_{n\in \mathbb{N}}$ and $(g_n^{(i)})_{n\in \mathbb{N}}$ $(i\in \mathcal{I})$ are bounded; i.e., $N_1$ and $N_2$ defined as in Lemma \ref{lem:0-1} are finite.
Lemma \ref{lem:0-1}(i) implies that, for all $n \geq m_0$ and for all $x^\star \in X^\star$,
\begin{align*}
&\quad 2 \sum_{i\in{\mathcal{I}}} \alpha^{\left(i \right)} \left(1-\alpha^{\left(i\right)} \right)
\left\| x_n^{\left(i-1\right)} - Q^{\left(i\right)} \left(x_n^{\left(i-1\right)} \right) \right\|^2\\
&\leq \left\| x_n - x^\star \right\|^2 - \left\| x_{n+1} - x^\star \right\|^2 + 2 I N_1 \lambda_n^2 
 - 2\lambda_n \sum_{i\in{\mathcal{I}}} \left\langle x_n^{\left(i-1\right)} - x^\star, g_n^{\left(i\right)} \right\rangle.
\end{align*}
Accordingly, the existence of $\lim_{n\to\infty} \| x_n - x^\star \|$ $(x^\star \in X^\star)$ and (C2) guarantee that
\begin{align}\label{qi-1}
\lim_{n\to\infty} \left\| x_n^{\left(i-1\right)} - Q_\alpha^{(i)} \left(x_n^{\left(i-1\right)} \right) \right\| =
\lim_{n\to\infty} \left\| x_n^{\left(i-1\right)} - Q^{\left(i\right)} \left(x_n^{\left(i-1\right)} \right) \right\| = 0 \text{ } \left(i\in{\mathcal{I}}\right).
\end{align}
Moreover, since 
$\|x_n^{(i-1)} - x_n^{(i)}\| = \| x_n^{(i-1)} - Q_\alpha^{(i)} (x_n^{(i-1)}) + \lambda_n g_n^{(i)} \|
\leq \| x_n^{(i-1)} - Q_\alpha^{(i)} (x_n^{(i-1)})\| +  \sqrt{N_1}\lambda_n$ $(n\in\mathbb{N}, i\in \mathcal{I})$,
\eqref{qi-1} and (C2) ensure that $\lim_{n\to\infty} \|x_n^{(i-1)} - x_n^{(i)}\| = 0$ $(i\in \mathcal{I})$.
Since the triangle inequality implies that 
$\| x_n - x_n^{(i-1)} \| \leq \sum_{j=1}^{i-1} \| x_n^{(j-1)} - x_n^{(j)} \|$ $(n\in\mathbb{N}, i\in \mathcal{I})$,
\begin{align}\label{qi-4}
\lim_{n\to\infty} \left\| x_n - x_n^{\left(i-1\right)} \right\| = 0 \text{ } (i\in\mathcal{I}).
\end{align}
From $\| x_n - Q_\alpha^{(i)} (x_n^{(i-1)}) \| \leq \| x_n - x_n^{(i-1)} \| + \| x_n^{(i-1)} - Q_\alpha^{(i)} (x_n^{(i-1)}) \|$ $(n\in\mathbb{N}, i\in \mathcal{I})$,
\begin{align}\label{qi-3}
\lim_{n\to\infty} \left\| x_n - Q_\alpha^{(i)} \left(x_n^{\left(i-1\right)}\right) \right\| = 0
\text{ } (i\in\mathcal{I}).
\end{align}
Here, let us define that, for all $n \in \mathbb{N}$ and for all $x\in X$,
\begin{align}\label{Nn}
\begin{split}
N_n (x) &:= f(x_n) - f(x) - \sqrt{N_1} \sum_{i\in{\mathcal{I}}}\left\| Q_\alpha^{(i)} \left(x_n^{\left(i-1\right)} \right) - x_n^{\left(i-1\right)}  \right\|\\
&\quad - N_2 \sum_{i\in{\mathcal{I}}}\left\| Q_\alpha^{(i)} \left(x_n^{\left(i-1\right)} \right) - x_n \right\|
- I N_1 \lambda_n.
\end{split}
\end{align}
Then, Lemma \ref{lem:0-1}(ii) leads to the finding that, for all $n \in \mathbb{N}$ and for all $x\in X$,
\begin{align}\label{iii-1}
2\lambda_n N_n(x) \leq \left\| x_n - x \right\|^2 - \left\| x_{n+1} - x \right\|^2.
\end{align} 
A discussion similar to the one for obtaining $\liminf_{n\to\infty} M_n(x) \leq 0$ $(x\in X)$
guarantees that 
$\liminf_{n\to\infty} N_n(x) \leq 0$ $(x\in X)$, which, together with (C2), \eqref{qi-1}, and \eqref{qi-3}, implies that
$\liminf_{n\to\infty} f (x_n ) \leq f(x)$ $( x\in X )$.
Accordingly, there exists a subsequence $(x_{n_l})_{l\in \mathbb{N}}$ of $(x_n)_{n\in\mathbb{N}}$ such that 
$\lim_{l\to\infty} f ( x_{n_l} ) =  \liminf_{n\to\infty} f(x_n ) \leq f(x)$ $( x\in X )$.
Since $(x_{n_l})_{l\in \mathbb{N}}$ is bounded, there exists 
$(x_{n_{l_m}})_{m\in \mathbb{N}}$ $(\subset (x_{n_l})_{l\in \mathbb{N}})$ such that $(x_{n_{l_m}})_{m\in \mathbb{N}}$ weakly converges to $x_* \in H$.
Equation \eqref{qi-4} guarantees that $(x_{n_{l_m}}^{(i-1)})_{m\in \mathbb{N}}$ $(i\in\mathcal{I})$ weakly converges to $x_*$.
Thus, (A6) and \eqref{qi-1} ensure that $x_* \in X$.
From the same discussion as in the proof of Theorem \ref{thm:2}, 
$(x_n)_{n\in\mathbb{N}}$ weakly converges to a point in $X^\star$.
Moreover, \eqref{qi-4} implies that $(x_n^{(i)})_{n\in\mathbb{N}}$ $(i\in\mathcal{I})$
weakly converges to a point in $X^\star$.

Case 2: 
Suppose that $x_0^* \in X^\star$ and $(x_{n_j})_{j\in\mathbb{N}}$ $(\subset (x_n)_{n\in\mathbb{N}})$ exist such that 
$\Gamma_{n_j} := \| x_{n_j} - x_0^* \| < \| x_{n_j +1} - x_0^* \|$ for all $j\in\mathbb{N}$.
Assumption \ref{a8_1} and the quasi-nonexpansivity of $Q_\alpha^{(i)}$ $(i\in\mathcal{I})$ guarantee the boundedness of 
$(Q_\alpha^{(i)} (x_n^{(i-1)}))_{n\in\mathbb{N}}$ $(i\in\mathcal{I})$.
Hence, Proposition \ref{prop:sub} ensures that $N_1, N_2 < \infty$.
Proposition \ref{mainge} means the existence of $m_1 \in \mathbb{N}$ such that 
$\Gamma_{\tau(n)} < \Gamma_{\tau(n)+1}$ for all $n \geq m_1$, where $\tau(n)$ is as in Proposition \ref{mainge}.
Lemma \ref{lem:0-1}(i) means that, for all $n\geq m_1$, 
\begin{align*}
&\quad 2 \sum_{i\in{\mathcal{I}}} \alpha^{\left(i \right)} \left(1-\alpha^{\left(i\right)} \right)
\left\| x_{\tau(n)}^{\left(i-1\right)} - Q^{\left(i\right)} \left(x_{\tau(n)}^{\left(i-1\right)} \right) \right\|^2\\
&\leq \Gamma_{\tau(n)}^2 - \Gamma_{\tau(n) +1}^2 + 2 I N_1 \lambda_{\tau(n)}^2 
 - 2\lambda_{\tau(n)} \sum_{i\in{\mathcal{I}}} \left\langle x_{\tau(n)}^{\left(i-1\right)} - x_0^*, g_{\tau(n)}^{\left(i\right)} \right\rangle\\
&< \left( 2 I N_1 \lambda_{\tau(n)} -2 \sum_{i\in{\mathcal{I}}} \left\langle x_{\tau(n)}^{\left(i-1\right)} - x_0^*, g_{\tau(n)}^{\left(i\right)} \right\rangle \right) \lambda_{\tau(n)},
\end{align*}
which, together with $\lim_{n\to\infty} \tau(n)=\infty$ and (C2), implies that 
\begin{align}\label{Q0}
\lim_{n\to\infty} \left\| x_{\tau(n)}^{\left(i-1\right)} - Q^{\left(i\right)} \left(x_{\tau(n)}^{\left(i-1\right)} \right) \right\| = 0 \text{ } 
\left(i\in {\mathcal{I}} \right).
\end{align}
The same discussions for obtaining \eqref{qi-1}, \eqref{qi-4}, and \eqref{qi-3} imply that
\begin{align}
&\lim_{n\to\infty} \left\| x_{\tau(n)}^{\left(i-1\right)} - Q_\alpha^{\left(i\right)} \left(x_{\tau(n)}^{\left(i-1\right)} \right) \right\| = 0 \text{ } (i\in\mathcal{I}),\label{Q1}\\
&\lim_{n\to\infty} \left\| x_{\tau(n)} - x_{\tau(n)}^{\left(i-1\right)} \right\| = 0 \text{ } (i\in\mathcal{I}),\label{Q2}\\
&\lim_{n\to\infty} \left\| x_{\tau(n)} - Q_\alpha^{\left(i\right)} \left(x_{\tau(n)}^{\left(i-1\right)} \right) \right\| = 0 \text{ } (i\in\mathcal{I}).\label{Q3}
\end{align}
Inequality \eqref{iii-1} and $\lambda_{\tau(n)} > 0$ $(n \geq m_1)$ mean that
$N_{\tau(n)} (x_0^*) < 0$ $(n \geq m_1)$; i.e., for all $n \geq m_1$,
\begin{align}\label{ftau}
\begin{split}
f \left(x_{\tau(n)} \right) - f^\star
&< 
\sqrt{N_1} \sum_{i\in{\mathcal{I}}}\left\| Q_\alpha^{(i)} \left(x_{\tau(n)}^{\left(i-1\right)} \right) - x_{\tau(n)}^{\left(i-1\right)}  \right\|
+ I N_1 \lambda_{\tau(n)}\\
&\quad + N_2 \sum_{i\in{\mathcal{I}}}\left\| Q_\alpha^{(i)} \left(x_{\tau(n)}^{\left(i-1\right)} \right) - x_{\tau(n)} \right\|.
\end{split}
\end{align}
Accordingly, (C2), \eqref{Q1}, and \eqref{Q3} imply that  
$\limsup_{n\to\infty} f (x_{\tau(n)} ) \leq f^\star$,
which implies that, for any subsequence $(x_{\tau(n_k)})_{k\in\mathbb{N}}$ $(\subset (x_{\tau(n)})_{n \geq m_1})$,
$\lim_{k\to\infty} f (x_{\tau(n_k)} ) \leq  \limsup_{n\to\infty} f (x_{\tau(n)} ) \leq f^\star$.
From the boundedness of $(x_{\tau(n_k)})_{k\in\mathbb{N}}$, 
there is $(x_{\tau(n_{k_l})})_{l\in\mathbb{N}}$ $(\subset (x_{\tau(n_k)})_{k\in\mathbb{N}})$, which weakly converges to $x_\star \in H$.
Equation \eqref{Q2} implies that $(x_{\tau(n_{k_l})}^{(i-1)})_{l\in\mathbb{N}}$ $(i\in\mathcal{I})$ weakly converges to $x_\star$.
Hence, (A6) and \eqref{Q0} lead to $x_\star \in X$.
The same discussion as in the proof of Theorem \ref{thm:2} guarantees that $x_\star \in X^\star$.
Therefore, there exists a subsequence of $(x_n^{(i)})_{n\in\mathbb{N}}$ $(i\in \mathcal{I})$
that weakly converges to a point in $X^\star$.

Let us assume that either (i) or (ii) is satisfied. 
A discussion similar to the one for proving the strong convergence of $(x_n)_{n\in \mathbb{N}}$ in Algorithm \ref{algo:1} to a unique point in $X^\star$ guarantees 
that $(x_n)_{n\in \mathbb{N}}$ in Algorithm \ref{algo:2} strongly converges to 
$x^\star \in X^\star$.
From $\lim_{n\to \infty} \| x_n - x_n^{(i-1)} \| = 0$ $(i\in \mathcal{I})$ (see \eqref{qi-4} and \eqref{Q2}), we can conclude that $(x_n^{(i)})$ $(i\in \mathcal{I})$ strongly converges to $x^\star$. 
This completes the proof.
\end{proof}

Regarding the relationship between the proposed algorithms (Algorithms \ref{algo:1} and \ref{algo:2}) and the distributed random projection method \cite{lee2013}, we have the following remark.
\begin{rem}\label{rem:1}
{\em
Suppose that user $i$'s objective function $f^{(i)}$ is convex and differentiable and that user $i$'s constraint set $C^{(i)}$ is 
defined as the intersection of finitely many simple closed convex constraints; i.e., 
\begin{align*}
C^{(i)} := \bigcap_{k \in \mathcal{J}^{(i)}} C_k^{(i)},
\end{align*} 
where $\mathcal{J}^{(i)}$ is finite and $C_k^{(i)}$ $(k\in \mathcal{J}^{(i)})$ is a nonempty, closed convex set of $\mathbb{R}^N$
such that $P_{C_k^{(i)}}$ can be computed efficiently. 
At iteration $n$ of the method \cite{lee2013}, user $i$ calculates the weighted average of the $x_n^{(j)}$ received
from its local neighbors $j$ and determines the iteration value by using the gradient information of its own objective function and 
the metric projection onto a constraint $C_{\Omega_n^{(i)}}^{(i)}$ $(\Omega_n^{(i)} \in \mathcal{J}^{(i)})$ selected {\em randomly} from its constraint set $C^{(i)}$; i.e.,
\begin{align}\label{lee}
\begin{split}
&v_n^{(i)} := \sum_{j \in N_n^{(i)}} w_{ij,n}x_n^{(j)},\\
&x_{n+1}^{(i)} := P_{C_{\Omega_n^{(i)}}^{(i)}} \left( v_n^{(i)} - \alpha_n \nabla f^{(i)} \left( v_n^{(i)}  \right)  \right),
\end{split}
\end{align} 
where $N_n^{(i)}$ stands for the set of user $i$ and the users that send information to user $i$, 
$w_{ij,n} \geq 0$ $(j\in N_n^{(i)})$ with $\sum_{j \in N_n^{(i)}} w_{ij,n} = 1$ $(i\in \mathcal{I})$, and $\alpha_n > 0$.
Proposition 1 in \cite{lee2013} indicates that, under certain assumptions, the sequence $(x_n^{(i)})_{n\in\mathbb{N}}$ $(i\in\mathcal{I})$ generated by Algorithm \eqref{lee}
converges almost surely to the minimizer of $\sum_{i\in\mathcal{I}} f^{(i)}$ over $\bigcap_{i\in\mathcal{I}} C^{(i)}$.  

Algorithm \ref{algo:1} (resp. Algorithm \ref{algo:2}) can be applied to the problem considered in \cite{lee2013}
under Assumption (A5) (resp. Assumption (A7)) and the assumption that
user $i$ can use {\em all} $P_{C_k^{(i)}}$ $(k\in \mathcal{J}^{(i)})$ 
at each iteration. 
Since the product of metric projections or the weighted average of metric projections is a special case of a quasi-nonexpansive mapping,
$Q^{(i)}$ in Algorithms \ref{algo:1} and \ref{algo:2} can be given, for example, by 
\begin{align*}
Q^{(i)} := \prod_{k\in \mathcal{J}^{(i)}} P_{C_k^{(i)}} \text{ or } Q^{(i)} := \sum_{k\in \mathcal{J}^{(i)}} w_k^{(i)} P_{C_k^{(i)}},
\end{align*}
where $(w_k^{(i)})_{k\in \mathcal{J}^{(i)}}$ $(i\in\mathcal{I})$ satisfies $\sum_{k\in \mathcal{J}^{(i)}} w_k^{(i)} = 1$.  
}
\end{rem}

\subsection{Convergence rate analysis of Algorithm \ref{algo:2} with diminishing step size}\label{subsec:4.3}
Here we first discuss the rate of convergence of Algorithm \ref{algo:2} for unconstrained nonsmooth convex optimization.

\begin{cor}\label{cor:3}
Consider Problem \ref{prob:1} when $Q^{(i)} = \mathrm{Id}$ $(i\in \mathcal{I})$
and suppose that the assumptions in Theorem \ref{thm:4} hold.
Then, for a large enough $n \in \mathbb{N}$,
\begin{align*}
f(x_n) - f^\star \leq I \left\{ \frac{(I-1)}{2} \sqrt{N_1} N_2  + N_1 \right\} \lambda_n,
\end{align*}
where $N_1 := \max_{i\in \mathcal{I}} N_1^{{(i)}^2} < \infty$, 
$N_2 := \max_{i\in \mathcal{I}} N_2^{{(i)}} < \infty$, 
and $N_1^{(i)}$ and $N_2^{(i)}$ $(i\in \mathcal{I})$ are defined as in Assumption \ref{a8}.
\end{cor}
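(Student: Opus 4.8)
The plan is to reproduce the two-case argument used for Corollary \ref{cor:1}, but now with the incremental quantity $N_n$ from \eqref{Nn} in place of $M_n$ and with Theorem \ref{thm:4} supplying the convergence. The first observation is that $Q^{(i)} = \mathrm{Id}$ forces $Q_\alpha^{(i)} = \alpha^{(i)}\mathrm{Id} + (1-\alpha^{(i)})\mathrm{Id} = \mathrm{Id}$, so every term $\|Q_\alpha^{(i)}(x_n^{(i-1)}) - x_n^{(i-1)}\|$ in \eqref{Nn} vanishes and every term $\|Q_\alpha^{(i)}(x_n^{(i-1)}) - x_n\|$ collapses to $\|x_n^{(i-1)} - x_n\|$. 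Hence $N_n(x^\star) = f(x_n) - f^\star - N_2\sum_{i\in\mathcal{I}}\|x_n^{(i-1)} - x_n\| - IN_1\lambda_n$, and the whole corollary will follow once I bound the residual sum $\sum_{i\in\mathcal{I}}\|x_n^{(i-1)} - x_n\|$.

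In Case 1 of the proof of Theorem \ref{thm:4} it is already shown that $\liminf_{n\to\infty} N_n(x^\star) \leq 0$, where $\{x^\star\} = X^\star$. I would upgrade this to eventual nonpositivity exactly as in Corollary \ref{cor:1}: if no $k_1$ with $N_n(x^\star)\leq 0$ for $n\geq k_1$ existed, some subsequence would satisfy $N_{n_j}(x^\star) > \gamma > 0$; but Theorem \ref{thm:4} gives $x_n \to x^\star$ strongly, so $f(x_n)\to f^\star$ by continuity of $f$, while $\lambda_n\to 0$ and $\|x_n^{(i-1)}-x_n\|\to 0$ (from \eqref{qi-4}), forcing $\lim_j N_{n_j}(x^\star) = 0$ and contradicting $\gamma>0$. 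Thus $N_n(x^\star)\leq 0$ for all large $n$, i.e. $f(x_n) - f^\star \leq N_2\sum_{i\in\mathcal{I}}\|x_n^{(i-1)}-x_n\| + IN_1\lambda_n$.

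The remaining step, and the only genuinely new computation, is the telescoping bound on the residual. Because $Q_\alpha^{(i)} = \mathrm{Id}$, the update reduces to $x_n^{(i)} = x_n^{(i-1)} - \lambda_n g_n^{(i)}$, so $x_n^{(i-1)} = x_n - \lambda_n\sum_{j=1}^{i-1} g_n^{(j)}$, and the triangle inequality together with $\|g_n^{(j)}\|\leq N_1^{(j)}\leq\sqrt{N_1}$ (Assumption \ref{a8}) give $\|x_n - x_n^{(i-1)}\| \leq (i-1)\sqrt{N_1}\,\lambda_n$. Summing over $i\in\mathcal{I} = \{1,\dots,I\}$ yields $\sum_{i\in\mathcal{I}}\|x_n^{(i-1)}-x_n\| \leq \sqrt{N_1}\,\lambda_n\sum_{i=1}^{I}(i-1) = (I(I-1)/2)\sqrt{N_1}\,\lambda_n$, so that $f(x_n)-f^\star \leq (I(I-1)/2)\sqrt{N_1}N_2\lambda_n + IN_1\lambda_n = I\{((I-1)/2)\sqrt{N_1}N_2 + N_1\}\lambda_n$, which is the claimed rate. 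Finally, Case 2 is handled by specializing \eqref{ftau}: with $Q_\alpha^{(i)}=\mathrm{Id}$ it reads $f(x_{\tau(n)}) - f^\star < IN_1\lambda_{\tau(n)} + N_2\sum_{i\in\mathcal{I}}\|x_{\tau(n)}^{(i-1)} - x_{\tau(n)}\|$, and the same telescoping bound applied at index $\tau(n)$ reproduces the estimate for $x_{\tau(n)}$. I expect no deep obstacle here; the only points requiring care are the bookkeeping of the partial sums $\sum_{j=1}^{i-1} g_n^{(j)}$ in the telescoping estimate and the invocation of the uniform subgradient bound $\sqrt{N_1} = \max_{i}N_1^{(i)}$, which is what converts $\|g_n^{(j)}\|$ into the constant appearing in the final inequality.
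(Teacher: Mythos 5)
Your proposal is correct and follows essentially the same route as the paper: specialize $N_n$ from \eqref{Nn} under $Q_\alpha^{(i)}=\mathrm{Id}$, upgrade $\liminf_{n\to\infty}N_n(x^\star)\leq 0$ to eventual nonpositivity in Case 1 via the strong convergence from Theorem \ref{thm:4}, use \eqref{ftau} in Case 2, and close with the telescoping bound $\sum_{i\in\mathcal{I}}\|x_n-x_n^{(i-1)}\|\leq (I(I-1)/2)\sqrt{N_1}\lambda_n$, which is exactly the paper's inequality \eqref{difference}. No gaps.
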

Corollary \ref{cor:3} indicates that, 
when the same step size sequence is used, the efficiency of Algorithm \ref{algo:2} 
with $Q^{(i)} = \mathrm{Id}$ $(i\in \mathcal{I})$ may decrease as the number of users 
$I$ increases.
This can also be seen in Corollary \ref{cor:1}, indicating the rate of convergence of 
Algorithm \ref{algo:1} with $Q^{(i)} = \mathrm{Id}$ $(i\in \mathcal{I})$.
 
\begin{proof}
The triangle inequality ensures that
$\| x_n - x_n^{(i-1)} \| \leq \sum_{j=1}^{i-1} \| x_n^{(j-1)} - x_n^{(j)} \|$
$(n\in \mathbb{N})$, which, together with 
the definition of $x_n^{(i)}$ $(i\in \mathcal{I}, n\in \mathbb{N})$, implies 
$\| x_n - x_n^{(i-1)} \| \leq \sum_{j=1}^{i-1} \sqrt{N_1} \lambda_n = (i-1) \sqrt{N_1} \lambda_n$
$(i\in \mathcal{I}, n\in \mathbb{N})$.
Accordingly, for all $n\in \mathbb{N}$, 
\begin{align}\label{difference}
\sum_{i\in \mathcal{I}} \left\| x_n - x_n^{(i-1)} \right\| 
\leq \sum_{i\in \mathcal{I}} (i-1) \sqrt{N_1} \lambda_n = \frac{I(I-1)}{2} \sqrt{N_1} \lambda_n. 
\end{align}

In Case 1 in the proof of Theorem \ref{thm:4},
$\liminf_{n\to \infty} N_n(x^\star) \leq 0$ holds, where 
$N_n(x)$ $(n\in \mathbb{N}, x\in H)$ is defined by \eqref{Nn} and $\{x^\star\} = X^\star$.
The same discussion as in the proof of Corollary \ref{cor:1} leads to the existence of 
$k_1 \in \mathbb{N}$ such that, for all $n \geq k_1$,
$N_n(x^\star) \leq 0$.
From $Q^{(i)} = \mathrm{Id}$ $(i\in \mathcal{I})$, for all $n \geq k_1$,
$N_n(x^\star) = f(x_n) - f^\star - N_2 \sum_{i\in \mathcal{I}} \| x_n - x_n^{(i-1)} \|  - I N_1 \lambda_n \leq 0$.
Hence, \eqref{difference} implies that, for all $n \geq k_1$, 
\begin{align*}
f(x_n) - f^\star 
&\leq N_2 \sum_{i\in \mathcal{I}} \left\| x_n - x_n^{(i-1)} \right\|  + I N_1 \lambda_n\\
&\leq \frac{I(I-1)}{2} \sqrt{N_1} N_2 \lambda_n + I N_1 \lambda_n.
\end{align*}

In Case 2 in the proof of Theorem \ref{thm:4}, 
the condition $Q^{(i)} = \mathrm{Id}$ $(i\in \mathcal{I})$,
\eqref{ftau}, and \eqref{difference} lead to the existence of $k_2 \in \mathbb{N}$ such that,
for all $n \geq k_2$, 
$f(x_{\tau(n)}) - f^\star < N_2 \sum_{i\in \mathcal{I}} \| x_{\tau(n)} - x_{\tau(n)}^{(i-1)} \| + I N_1 \lambda_{\tau(n)} \leq (I(I-1)/2) \sqrt{N_1} N_2 \lambda_{\tau(n)} + I N_1 \lambda_{\tau(n)}$.
This completes the proof. 
\end{proof}

The following corollary establishes the rate of convergence of Algorithm \ref{algo:2}
for constrained nonsmooth convex optimization under specific conditions.

\begin{cor}\label{cor:4}
Suppose that the assumptions in Theorem \ref{thm:4} hold. 
If there exists $\beta^{(i)} > 0$ $(i\in \mathcal{I})$ 
such that $\alpha^{(i)} > \beta^{{(i)}^2}/(\beta^{{(i)}^2} + 2)$ and 
$\mathrm{d}(x_n^{(i-1)}, X) := \| x_n^{(i-1)} - P_X (x_n^{(i-1)}) \| \leq \beta^{(i)} \|x_n^{(i-1)} - Q_{\alpha}^{(i)}(x_n^{(i-1)})\|$ $(i\in \mathcal{I}, n\in\mathbb{N})$ and if $(\|x_n^{(i-1)} - Q^{(i)}(x_n^{(i-1)})\|)_{n\in \mathbb{N}}$ $(i\in \mathcal{I})$ is monotone decreasing,
then, for all $i\in \mathcal{I}$ and for all $n\in \mathbb{N}$,
\begin{align*}
&\left\| x_n^{(i-1)} - Q^{(i)}\left(x_n^{(i-1)}\right)  \right\|^2
\leq \frac{\mathrm{d}\left(x_0,X \right)^2 + 3 I N_1 \sum_{k=0}^n \lambda_k^2}{\left( 1 - \alpha^{(i)} \right) \left\{ \left(\beta^{{(i)}^2} + 2 \right) \alpha^{(i)} - \beta^{{(i)}^2} \right\} \left(n+1\right)},
\end{align*}
where $(\lambda_n)_{n\in\mathbb{N}}$ satisfies $\sum_{n=0}^\infty \lambda_n^2 < \infty$,
$N_1 := \max_{i\in \mathcal{I}} N_1^{{(i)}^2} < \infty$, and 
$N_1^{(i)}$ $(i\in \mathcal{I})$ is defined as in Assumption \ref{a8}.
Moreover, for a large enough $n\in \mathbb{N}$,
\begin{align*}
f(x_n) - f^\star 
\leq 
I \left\{ \left( \sqrt{N_1} + \frac{(I+1)N_2}{2} \right) \sqrt{\frac{N_3}{n+1}}
+ \left( \frac{(I-1) \sqrt{N_1} N_2}{2} + N_1\right) \lambda_n
\right\},
\end{align*}
where $N_2 := \max_{i\in \mathcal{I}} N_2^{(i)} < \infty$,
$N_2^{(i)}$ $(i\in \mathcal{I})$ is defined as in Assumption \ref{a8},
$N_3 := \max_{i\in \mathcal{I}} N_3^{(i)} < \infty$,
and $N_3^{(i)} := (\mathrm{d}(x_0,X)^2 + 3 I N_1 \sum_{k=0}^\infty \lambda_k^2)/
(( 1 - \alpha^{(i)}) \{ (\beta^{{(i)}^2} + 2) \alpha^{(i)} - \beta^{{(i)}^2} \})$ $(i\in \mathcal{I})$.
\end{cor}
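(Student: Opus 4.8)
The plan is to follow the two-part structure of the proof of Corollary \ref{cor:2}, replacing Lemma \ref{lem:0} by Lemma \ref{lem:0-1} and, crucially, telescoping over the inner sub-iterations of Algorithm \ref{algo:2} rather than treating one outer step as a single block. The key device is to introduce, for each sub-iteration, the sub-iteration-specific projection $z_n^{(i-1)} := P_X(x_n^{(i-1)})$ and to exploit that $\mathrm{d}(x_n^{(i)},X) \le \|x_n^{(i)} - z_n^{(i-1)}\|$, since $z_n^{(i-1)} \in X$.

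For the first (feasibility) estimate, I would apply inequality \eqref{bdd} with $x = z_n^{(i-1)}$, so that its left side dominates $\mathrm{d}(x_n^{(i)},X)^2$ while $\|x_n^{(i-1)} - z_n^{(i-1)}\|^2$ equals $\mathrm{d}(x_n^{(i-1)},X)^2$. The inner-product term $-2\lambda_n\langle x_n^{(i-1)} - z_n^{(i-1)}, g_n^{(i)}\rangle$ is bounded by $\|z_n^{(i-1)} - x_n^{(i-1)}\|^2 + \lambda_n^2\|g_n^{(i)}\|^2$ via Cauchy--Schwarz and $2ab \le a^2 + b^2$; then the hypothesis $\mathrm{d}(x_n^{(i-1)},X) \le \beta^{(i)}\|x_n^{(i-1)} - Q_\alpha^{(i)}(x_n^{(i-1)})\|$, the identity $x_n^{(i-1)} - Q_\alpha^{(i)}(x_n^{(i-1)}) = (1-\alpha^{(i)})(x_n^{(i-1)} - Q^{(i)}(x_n^{(i-1)}))$, and $\|g_n^{(i)}\|^2 \le N_1$ collapse all quadratic terms into
\begin{align*}
\mathrm{d}(x_n^{(i)},X)^2 \le \mathrm{d}(x_n^{(i-1)},X)^2 - (1-\alpha^{(i)})\{(\beta^{{(i)}^2}+2)\alpha^{(i)} - \beta^{{(i)}^2}\}\left\| x_n^{(i-1)} - Q^{(i)}(x_n^{(i-1)})\right\|^2 + 3N_1\lambda_n^2.
\end{align*}
Since $x_n^{(I)} = x_{n+1} = x_{n+1}^{(0)}$, summing over $i\in\mathcal{I}$ and then over $n=0,\dots,N$ telescopes the distance terms and produces the constant $3IN_1\sum_{n=0}^N\lambda_n^2$; dropping $\mathrm{d}(x_{N+1},X)^2 \ge 0$ and invoking the monotone decreasing property of $(\|x_n^{(i-1)} - Q^{(i)}(x_n^{(i-1)})\|)_{n\in\mathbb{N}}$ to bound the sum below by an $(N+1)$-multiple of the last term yields the first displayed bound.

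For the second (objective) estimate, I would reuse Cases 1 and 2 in the proof of Theorem \ref{thm:4} exactly as Corollary \ref{cor:3} does: Case 1 gives $N_n(x^\star) \le 0$ for large $n$ and Case 2 gives \eqref{ftau}, so that for large $n$
\begin{align*}
f(x_n) - f^\star \le \sqrt{N_1}\sum_{i\in\mathcal{I}}\left\| Q_\alpha^{(i)}(x_n^{(i-1)}) - x_n^{(i-1)}\right\| + N_2\sum_{i\in\mathcal{I}}\left\| Q_\alpha^{(i)}(x_n^{(i-1)}) - x_n\right\| + I N_1\lambda_n.
\end{align*}
I would then bound $\|Q_\alpha^{(i)}(x_n^{(i-1)}) - x_n^{(i-1)}\| = (1-\alpha^{(i)})\|x_n^{(i-1)} - Q^{(i)}(x_n^{(i-1)})\| \le (1-\alpha^{(i)})\sqrt{N_3/(n+1)}$ from the first part, and for the middle term use the triangle inequality $\|Q_\alpha^{(i)}(x_n^{(i-1)}) - x_n\| \le \|Q_\alpha^{(i)}(x_n^{(i-1)}) - x_n^{(i-1)}\| + \sum_{j=1}^{i-1}\|x_n^{(j-1)} - x_n^{(j)}\|$ together with the step estimate $\|x_n^{(j-1)} - x_n^{(j)}\| \le (1-\alpha^{(j)})\sqrt{N_3/(n+1)} + \sqrt{N_1}\lambda_n$. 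Collecting the resulting terms with $\sum_{i=1}^I i = I(I+1)/2$ and $\sum_{i=1}^I (i-1) = I(I-1)/2$, and bounding each $1-\alpha^{(i)}$ by $1$, reproduces the stated coefficient $I(\sqrt{N_1} + (I+1)N_2/2)$ of $\sqrt{N_3/(n+1)}$ and $I((I-1)\sqrt{N_1}N_2/2 + N_1)$ of $\lambda_n$.

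I expect the main obstacle to be setting up the telescoping in the first part: unlike Corollary \ref{cor:2}, a single reference projection $P_X(x_n)$ does not interact cleanly with the hypotheses, which are stated at each intermediate point $x_n^{(i-1)}$. The resolution is to let the reference point drift with the sub-iteration index, using $z_n^{(i-1)} = P_X(x_n^{(i-1)})$ and the one-sided inequality $\mathrm{d}(x_n^{(i)},X) \le \|x_n^{(i)} - z_n^{(i-1)}\|$; once this is in place, the rest is bookkeeping identical in spirit to Corollaries \ref{cor:2} and \ref{cor:3}.
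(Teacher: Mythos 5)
Your proposal is correct and follows essentially the same route as the paper: the paper likewise uses the drifting reference points $z_n^{(i)} := P_X(x_n^{(i)})$ together with inequality \eqref{bdd} and $\mathrm{d}(x_n^{(i)},X) \leq \|x_n^{(i)} - z_n^{(i-1)}\|$ to telescope over the sub-iterations, bounds the inner-product terms by Cauchy--Schwarz and $2ab \leq a^2+b^2$ combined with the hypothesis on $\beta^{(i)}$, and then derives the objective bound from $N_n(x^\star) \leq 0$ and \eqref{ftau} exactly as you describe. The only (immaterial) difference is that you collapse each sub-iteration into a single recursion before summing, whereas the paper first chains over $i$ to get the outer-iteration inequality and only then estimates the accumulated inner-product sum.
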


Consider the case where $\alpha^{(i)} := 1/2$ and 
$Q^{(i)} := (1/(1-\alpha^{(i)})) (P_X - \alpha^{(i)} \mathrm{Id})$ $(i\in \mathcal{I})$; 
i.e., $Q_\alpha^{(i)} = P_X$ $(i\in \mathcal{I})$ and $Q^{(i)}$ $(i\in \mathcal{I})$ is nonexpansive \cite[Proposition 4.25]{b-c}.
Then,
$\beta^{(i)} = 1$ can be chosen such that $\alpha^{(i)} = 1/2 >  \beta^{{(i)}^2}/(\beta^{{(i)}^2} + 2) = 1/3$ and 
$\mathrm{d}(x_n^{(i-1)}, X) = \beta^{(i)} \|x_n^{(i-1)} - Q_{\alpha}^{(i)}(x_n^{(i-1)})\|$ $(i\in \mathcal{I}, n\in\mathbb{N})$.

Corollary \ref{cor:4} implies that, 
when the same step size sequence is used, the efficiency of Algorithm \ref{algo:2} 
may decrease as the number of users $I$ increases.
This can also be seen in Corollary \ref{cor:2}, indicating the rate of convergence of 
Algorithm \ref{algo:1} for constrained nonsmooth convex optimization.

\begin{proof}
Define $\mathrm{d}(x,X):= \|x- P_X(x)\|$ $(x\in H)$ and $z_n^{(i)} := P_X(x_n^{(i)})$
$(i\in \mathcal{I}, n\in \mathbb{N})$.
From \eqref{bdd}, for all $i\in \mathcal{I}$ and for all $n\in \mathbb{N}$,
\begin{align}
\left\| x_n^{(i)} - z_n^{(i-1)} \right\|^2 
&\leq \mathrm{d}\left(x_n^{(i-1)}, X \right)^2 - 2 \alpha^{(i)}  \left( 1 - \alpha^{(i)} \right)\left\| x_n^{(i-1)} - Q^{(i)} \left(x_n^{(i-1)}\right) \right\|^2\nonumber\\ 
&\quad +2 N_1 \lambda_n^2 + 2 \lambda_n \left\langle z_n^{(i-1)} - x_n^{(i-1)}, g_n^{\left(i\right)} \right\rangle,
\end{align}
which, together with $\mathrm{d}(x_n^{(i)},X) \leq \| x_n^{(i)} - z_n^{(i-1)} \|$
$(i\in \mathcal{I}, n\in \mathbb{N})$ and the definition of $x_n$ $(n\in \mathbb{N})$, implies that, for all $n\in \mathbb{N}$,
\begin{align*}
\mathrm{d}\left(x_{n+1}, X \right)^2 
&\leq \mathrm{d}\left(x_n, X \right)^2 
   - 2 \sum_{i\in \mathcal{I}} \alpha^{(i)}  \left( 1 - \alpha^{(i)} \right)\left\| x_n^{(i-1)} - Q^{(i)} \left(x_n^{(i-1)}\right) \right\|^2\nonumber\\ 
&\quad +2 I N_1 \lambda_n^2 + 2 \lambda_n \sum_{i\in \mathcal{I}} \left\langle z_n^{(i-1)} - x_n^{(i-1)}, g_n^{\left(i\right)} \right\rangle.
\end{align*}
Furthermore, the Cauchy-Schwarz inequality and $2\|x\| \|y\| \leq \|x\|^2 + \|y\|^2$ $(x,y\in H)$ ensure that,
for all $i\in \mathcal{I}$ and for all $n\in \mathbb{N}$,
$2 \langle z_n^{(i-1)} - x_n^{(i-1)}, \lambda_n g_n^{(i)} \rangle
\leq \| z_n^{(i-1)} - x_n^{(i-1)} \|^2 + \lambda_n^2 \| g_n^{(i)} \|^2$,
which, together with the definition of $N_1$ and 
$\| x_n^{(i-1)} - z_n^{(i-1)} \| \leq \beta^{(i)} \|x_n^{(i-1)} - Q_{\alpha}^{(i)}(x_n^{(i-1)})\|$ 
$(i\in \mathcal{I}, n\in\mathbb{N})$,
implies that 
\begin{align*}
2\sum_{n=0}^N \sum_{i\in \mathcal{I}} \left\langle z_n^{(i-1)} - x_n^{(i-1)}, \lambda_n g_n^{(i)} \right\rangle
&\leq \sum_{n=0}^N \sum_{i\in \mathcal{I}}
      \beta^{{(i)}^2}   
      \left\|x_n^{(i-1)} - Q_\alpha^{(i)}\left(x_n^{(i-1)}\right) \right\|^2\\
&\quad  + I N_1 \sum_{n=0}^N \lambda_n^2.    
\end{align*}
Accordingly, for all $N\in \mathbb{N}$,
\begin{align*}
&\sum_{n=0}^N \sum_{i\in \mathcal{I}}  
 \left( 1 - \alpha^{(i)} \right) \left\{ \left(\beta^{{(i)}^2} + 2 \right) \alpha^{(i)} - \beta^{{(i)}^2}  \right\} 
\left\| x_n^{(i-1)} - Q^{(i)} \left(x_n^{(i-1)} \right) \right\|^2\\
&\quad \leq \mathrm{d}\left(x_0, X \right)^2 + 3 I N_1 \sum_{n=0}^N \lambda_n^2.
\end{align*}
Since $(\|x_n^{(i-1)} - Q^{(i)}(x_n^{(i-1)})\|)_{n\in \mathbb{N}}$ $(i\in \mathcal{I})$ is monotone decreasing, 
for all $j\in \mathcal{I}$ and for all $N\in \mathbb{N}$, 
\begin{align*}
&\quad \left(N+1 \right) 
\left( 1 - \alpha^{(j)} \right) \left\{ \left(\beta^{{(j)}^2} + 2 \right) \alpha^{(j)} - \beta^{{(j)}^2}  \right\} \left\| x_N^{(j-1)} - Q^{(j)}\left(x_N^{(j-1)}\right) \right\|^2\\
&\leq \left(N+1 \right)
\sum_{i\in \mathcal{I}}  
 \left( 1 - \alpha^{(i)} \right) \left\{ \left(\beta^{{(i)}^2} + 2 \right) \alpha^{(i)} - \beta^{{(i)}^2}  \right\} \left\| x_N^{(i-1)} - Q^{(i)}\left(x_N^{(i-1)}\right) \right\|^2\\
&\leq \mathrm{d}\left( x_0,X \right)^2 + 3 I N_1 \sum_{n=0}^N \lambda_n^2,
\end{align*}
which implies that, for all $j\in \mathcal{I}$ and for all $N\in \mathbb{N}$,
\begin{align*}
&\left\| x_N^{(j-1)} - Q^{(j)} \left(x_N^{(j-1)} \right)  \right\|^2
\leq \frac{\mathrm{d}\left(x_0,X \right)^2 + 3 I N_1 \sum_{n=0}^N \lambda_n^2}{\left( 1 - \alpha^{(j)} \right) \left\{ \left(\beta^{{(j)}^2} + 2 \right) \alpha^{(j)} - \beta^{{(j)}^2} \right\} \left(N+1\right)}.
\end{align*}

Since $\| x_n^{(i-1)} - x_n^{(i)} \| \leq \| x_n^{(i-1)} - Q_\alpha^{(i)}(x_n^{(i-1)}) \| + \sqrt{N_1} \lambda_n$ $(i\in \mathcal{I}, n\in \mathbb{N})$,
for all $i\in \mathcal{I}$ and $n\in \mathbb{N}$,
\begin{align*}
\left\| x_n^{(i-1)} - x_n^{(i)} \right\| 
\leq \left( 1- \alpha^{(i)} \right) \sqrt{\frac{N_3}{n+1}} + \sqrt{N_1} \lambda_n.
\end{align*}
Moreover, since the triangle inequality implies that 
$\| x_n - x_n^{(i-1)} \| \leq \sum_{j=1}^{i-1} \| x_n^{(j-1)} - x_n^{(j)} \|$
$(i\in \mathcal{I},n\in \mathbb{N})$, for all $n\in \mathbb{N}$,
\begin{align*}
\sum_{i\in \mathcal{I}} \left\| x_n - x_n^{(i-1)} \right\|
&\leq \sum_{i\in \mathcal{I}} \sum_{j=1}^{i-1} 
     \left\{ \left( 1- \alpha^{(j)} \right) \sqrt{\frac{N_3}{n+1}} + \sqrt{N_1} \lambda_n \right\}\\
&\leq \frac{I(I-1)}{2} \left( \sqrt{\frac{N_3}{n+1}} + \sqrt{N_1} \lambda_n \right). 
\end{align*}
Accordingly, for all $n\in\mathbb{N}$,
\begin{align}
\sum_{i\in \mathcal{I}} \left\| x_n - Q_\alpha^{(i)} \left(x_n^{(i-1)}\right) \right\|
&\leq \sum_{i\in \mathcal{I}} \left\| x_n^{(i-1)} - Q_\alpha^{(i)} \left(x_n^{(i-1)}\right) \right\| 
+ \sum_{i\in \mathcal{I}} \left\| x_n - x_n^{(i-1)} \right\|\nonumber\\
&\leq \frac{I}{2} \left\{ (I+1) \sqrt{\frac{N_3}{n+1}} + (I-1)\sqrt{N_1} \lambda_n \right\}.\label{difference1}
\end{align}

In Case 1 in the proof of Theorem \ref{thm:4},
$\liminf_{n\to\infty} N_n(x^\star) \leq 0$, where $\{ x^\star \}= X^\star$. 
A discussion similar to the one for proving $M_n(x^\star) \leq 0$ $(n \geq k_1)$
(see proof of Corollary \ref{cor:1}) guarantees that 
there exists $k_3 \in \mathbb{N}$ such that, for all $n \geq k_3$, 
$N_n (x^\star) = f(x_n) - f^\star 
   - \sqrt{N_1} \sum_{i\in \mathcal{I}} \|x_n^{(i-1)} - Q_\alpha^{(i)}(x_n^{(i-1)})\| - N_2 \sum_{i\in \mathcal{I}} \| Q_\alpha^{(i)}(x_n^{(i-1)}) -x_n  \| - IN_1 \lambda_n \leq 0$.
From \eqref{ftau} in Case 2 in the proof of Theorem \ref{thm:4}, 
there exists $k_4 \in \mathbb{N}$ such that, for all $n \geq k_4$,
$f(x_{\tau(n)}) - f^\star < 
\sqrt{N_1} \sum_{i\in \mathcal{I}} \|x_{\tau(n)}^{(i-1)} - Q_\alpha^{(i)}(x_{\tau(n)}^{(i-1)})\| + N_2 \sum_{i\in \mathcal{I}} \| Q_\alpha^{(i)}(x_{\tau(n)}^{(i-1)}) 
 -x_{\tau(n)}  \|+ IN_1 \lambda_n$.
Therefore, from $\|x_n^{(i-1)} - Q_\alpha^{(i)} (x_n^{(i-1)})\|^2 \leq N_3/(n+1)$
$(n\in \mathbb{N})$
and \eqref{difference1}, for a large enough $n\in \mathbb{N}$,
\begin{align*}
&\quad f(x_n) - f^\star\\
&\leq 
\sqrt{N_1} \sum_{i\in \mathcal{I}} \left\|x_n^{(i-1)} - Q_\alpha^{(i)}\left(x_n^{(i-1)} \right) \right\| 
+ N_2 \sum_{i\in \mathcal{I}} \left\| Q_\alpha^{(i)}\left(x_n^{(i-1)}\right) -x_n \right\| + IN_1 \lambda_n\\
&\leq \sqrt{N_1} \sum_{i\in \mathcal{I}} \left( 1- \alpha^{(i)}\right) \sqrt{\frac{N_3}{n+1}} + IN_1 \lambda_n\\
&\quad + N_2 \frac{I}{2} \left\{ (I+1) \sqrt{\frac{N_3}{n+1}} + (I-1)\sqrt{N_1} \lambda_n \right\}\\
&\leq 
I \left\{ \left( \sqrt{N_1} + \frac{(I+1)N_2}{2} \right) \sqrt{\frac{N_3}{n+1}}
+ \left( \frac{(I-1) \sqrt{N_1} N_2}{2} + N_1\right) \lambda_n
\right\}.
\end{align*}
This completes the proof.
\end{proof}

\section{Numerical Examples}\label{sec:4}
This section considers the following problem over the intersection of sublevel sets of convex functions \cite[Section 3.2]{neto2009} 
and numerically compares Algorithms \ref{algo:1} and \ref{algo:2} with the method in \cite[(2.1), (3.1), (3.14), (4.3)]{neto2009}.
\begin{prob}\label{prob:2}
Let $f^{(i)} \colon \mathbb{R}^N \to \mathbb{R}$ and 
$g^{(i)} \colon \mathbb{R}^N \to \mathbb{R}$ $(i\in{\mathcal{I}})$ be convex.
\begin{align*}
\text{Minimize } f (x) := \sum_{i\in{\mathcal{I}}} f^{\left(i\right)} (x)  
\text{ subject to } x \in X := \bigcap_{i\in{\mathcal{I}}} \mathrm{lev}_{\leq 0} g^{\left(i \right)} \neq \emptyset,
\end{align*}
where $\mathrm{lev}_{\leq 0} g^{(i)} := \{ x\in \mathbb{R}^N \colon g^{(i)} (x) \leq 0 \}$.
\end{prob}

Let us define the {\em subgradient projection} \cite[Proposition 2.3]{b-c2001}, \cite[Subchapter 4.3]{vasin}
relative to $g^{(i)}$ $(i\in{\mathcal{I}})$ for all $x\in \mathbb{R}^N$ by 
\begin{align*}
Q_{\mathrm{sp}}^{\left(i\right)} (x) := 
\begin{cases}
\displaystyle{x - \frac{g^{\left(i\right)}(x)}{\left\| z^{(i)}(x) \right\|^2}z^{(i)}(x)} &\text{ if } g^{\left(i\right)}(x) > 0,\\
x &\text{ otherwise},
\end{cases}
\end{align*}
where $z^{(i)}(x) \in \partial g^{(i)}(x)$ $(i\in {\mathcal{I}}, x\in \mathbb{R}^N)$.
The mapping $Q_{\mathrm{sp}}^{\left(i\right)}$ $(i\in {\mathcal{I}})$
is quasi-firmly nonexpansive, and $\mathrm{Id} - Q_{\mathrm{sp}}^{(i)}$ $(i\in{\mathcal{I}})$ is demiclosed in the sense of the Euclidean space setting \cite[Lemma 3.1]{b-c2014}.  
Moreover, $\mathrm{Fix}(Q_{\mathrm{sp}}^{(i)})  
= \mathrm{lev}_{\leq 0} g^{(i)}$. 
Hence, Problem \ref{prob:2} is an example of Problem \ref{prob:1}
that can be solved by Algorithms \ref{algo:1} 
and \ref{algo:2} (see Theorems \ref{thm:1}, \ref{thm:2}, \ref{thm:3}, and \ref{thm:4}).

Here it is assumed that $\mathrm{lev}_{\leq 0} g^{(p)}$ is bounded for some $p\in {\mathcal{I}}$ 
(see also \cite[Proposition 3.4]{neto2009}).
Accordingly, a closed ball $Y$ with a large enough radius can be chosen so that $Y \supset \mathrm{lev}_{\leq 0} g^{(p)} \supset X$.
Hence, setting $X^{(i)} := Y$ $(i\in\mathcal{I})$ in \eqref{equation:0} satisfies 
Assumptions \ref{assum:0} and \ref{a8_1}.

The following is the incremental subgradient method (ISM) \cite[(2.1), (3.1), (3.14), (4.3)]{neto2009} 
used for solving Problem \ref{prob:2} given $x_0 \in \mathbb{R}^N$ and $(\lambda_n)_{n\in\mathbb{N}}$ $(\subset (0,\infty))$:
\begin{align}\label{neto}
\begin{cases}
x_n^{\left(0\right)} := x_n,\\
x_n^{\left(i\right)} := P_Y \left( x_n^{\left(i-1\right)} - \lambda_n g_n^{\left(i\right)} \right), \text{ } 
g_n^{\left(i\right)} \in \partial f^{\left(i \right)} \left( x_n^{\left(i-1\right)} \right) \text{ } 
\left( i \in \mathcal{I} \right),\\
y_n^{\left(0\right)} := x_n^{\left(I\right)},\\
y_n^{\left(i\right)} := Q_{\mathrm{sp}}^{\left(i\right)} \left(y_n^{\left(i-1\right)}  \right) \text{ }
\left( i \in \mathcal{I} \right),\\
x_{n+1} := y_n^{\left(I\right)}.
\end{cases}
\end{align}
Theorem 2.5 in \cite{neto2009} guarantees that, if $(\| x_n - P_X (x_n) \|)_{n\in\mathbb{N}}$ is bounded and if 
$\lim_{n\to\infty} \max \{0, f(P_X (x_n)) - f(x_n) \} = 0$,
$(x_n)_{n\in\mathbb{N}}$ generated by \eqref{neto} with (C2) and (C3) satisfies 
$\lim_{n\to\infty} \| x_n - P_X (x_n) \|=0$ and $\lim_{n\to\infty} f(x_n) = f^\star$.

In an experiment, we define that, for all $i\in \mathcal{I}$, 
$f^{(i)}(x) := | a^{(i)} x + b^{(i)}|$ $(x\in \mathbb{R})$ and 
$g^{(i)}(x) := \langle c^{(i)}, x \rangle + d^{(i)}$ 
$(\langle c^{(i)}, x \rangle > - d^{(i)})$ or $0$ $(\langle c^{(i)}, x \rangle \leq - d^{(i)})$, 
where $a^{(i)} > 0$, $b^{(i)}, d^{(i)} \in \mathbb{R}$, and $c^{(i)} \in \{ x:= (x_1,x_2,\ldots,x_I) \in\mathbb{R}^I \colon x_i > 0 \text{ } (i\in\mathcal{I}) \}$.
We modified $g^{(1)}(x) := \| x \| - 2 C$, where $C > 0$, to satisfy $\mathrm{lev}_{\leq 0}g^{(1)} \subset 
Y := \{ x\in \mathbb{R}^I \colon \|x\| \leq 2C \}$. 
The experiment was one using a 27-inch iMac with a 3.20 GHz Intel(R) Core(TM) i5-4570 CPU processor, 
24 GB, 1600 MHz DDR3 memory, and Mac OSX Yosemite (Version 10.10.3) operating system. 
ISM (Algorithm \eqref{neto}), Algorithm \ref{algo:1}, and Algorithm \ref{algo:2} were written in Python 3.4.3, 
and gnuplot 5.0 (patchlevel 0) was used to graph the results.
We set $I := 2, 8, 16, 64, 256$ and $\alpha^{(i)} := 1/2$ $(i\in\mathcal{I})$ and used 
$a^{(i)} \in (0,100], b^{(i)} \in [-100,100], c^{(i)}$ with $\| c^{(i)}\| = 1$, $d^{(i)} \in [-\sqrt[I]{C}, \sqrt[I]{C}]$, $\bar{a}^{(i)} \in \partial f^{(i)}(-b^{(i)}/a^{(i)})$, and 
$\bar{c}^{(i)} \in \partial g^{(i)}(x)$ $(\langle c^{(i)}, x \rangle = - d^{(i)})$ generated randomly by 
\texttt{numpy.random}\footnote{\url{http://docs.scipy.org/doc/numpy/reference/routines.random.html}} (a Mersenne Twister pseudo-random number generator).

To see how the choice of step size affects the convergence rate of the algorithms, we used
\begin{align}\label{step}
\begin{split}
&\text{Constant step sizes: } \lambda_n := 10^{-3}, 10^{-5} \text{ } \left( n\in\mathbb{N} \right),\\
&\text{Diminishing step sizes: } \lambda_n := \frac{10^{-3}}{(n+1)^{a}}
\text{ } \left(a := 1, 0.1, 0.01, n\in\mathbb{N} \right).
\end{split}
\end{align}
From Theorems \ref{thm:1} and \ref{thm:3}, it can be expected that Algorithms \ref{algo:1} and \ref{algo:2} with small enough constant step sizes approximate solutions to Problem \ref{prob:2}. Numerical results in \cite{iiduka_siopt2013,iiduka} indicate that the existing fixed point optimization algorithms with small step sizes (e.g., $\lambda_n := 10^{-2}/(n+1)^a, 10^{-3}/(n+1)^a, 10^{-5}/(n+1)^a$ $(a := 0.1, 0.01, n\in\mathbb{N})$) have fast convergence. Accordingly, the experiment described in this section used the step sizes in \eqref{step}. We also found that, under the same conditions as in the above paragraph, ISM, Algorithm \ref{algo:1}, and Algorithm \ref{algo:2} when $\lambda_n := 10^{-3}/(n+1)^{a}$ and $\lambda_n := 10^{-5}/(n+1)^{a}$ $(a := 0.1, 0.01, n\in\mathbb{N})$ perform better than when $\lambda_n := 1/(n+1)^{a}$ $(a = 0.1, 0.01, n\in\mathbb{N})$. Only the results for the step sizes in \eqref{step} are given due to lack of space.\
The step size $\lambda_n := 10^{-3}/(n+1)$ $(n\in \mathbb{N})$
satisfying $\sum_{n=0}^\infty \lambda_n^2 < \infty$
was used to illustrate the proposed methods' efficiency and support the convergence rate analysis of the methods (Corollaries \ref{cor:1}, \ref{cor:2}, \ref{cor:3}, and \ref{cor:4}).

We used two performance measures for each $n\in \mathbb{N}$:
\begin{align*}
D_n := \frac{1}{100} \sum_{s=1}^{100} \sum_{i\in\mathcal{I}} \left\| x_n\left(s\right) - Q_{\mathrm{sp}}^{\left(i\right)} \left(x_n\left(s\right)\right) \right\|,
\text{ } 
F_n := \frac{1}{100} \sum_{s=1}^{100} \sum_{i\in\mathcal{I}} f^{\left(i\right)} \left(x_n^{\left(i\right)}\left(s\right)\right), 
\end{align*}
where $(x_n(s))_{n\in\mathbb{N}}$ defined by $x_n (s):= (x_n^{(i)}(s))$ $(n\in\mathbb{N},s=1,2,\ldots,100)$
is the sequence generated by the initial point $x(s)$ 
$(s = 1,2,\ldots,100)$ and each of ISM, Algorithm \ref{algo:1}, and Algorithm \ref{algo:2}.
If $(D_n)_{n\in\mathbb{N}}$ converges to $0$, they converge to some point in 
$\bigcap_{i\in\mathcal{I}} \mathrm{Fix}(Q_{\mathrm{sp}}^{(i)}) = X$.

First, let us consider the case where $I := 64$ and $\lambda_n := 10^{-3}$ $(n\in\mathbb{N})$.
Figures \ref{fig:1} and \ref{fig:3} illustrate the results for ISM, Algorithm \ref{algo:1}, and Algorithm \ref{algo:2}. The y-axes in Figure \ref{fig:1} represent the value of $D_n$ while the y-axes in Figure \ref{fig:3} represent the value of $F_n$. The x-axes in Figures \ref{fig:1}(a) and \ref{fig:3}(a) represent the number of iterations while the x-axes in Figures \ref{fig:1}(b) and \ref{fig:3}(b) represent elapsed time. Figure \ref{fig:1} shows that $(D_n)_{n\in \mathbb{N}}$ generated by Algorithm \ref{algo:1} was stable and monotone decreasing while those generated by ISM and Algorithm \ref{algo:2} were unstable and approximately zero during the early iterations. Figure \ref{fig:3} shows that ISM, Algorithm \ref{algo:1}, and Algorithm \ref{algo:2} minimized $F_n$.

Figure \ref{fig:2} and \ref{fig:4} illustrate the results when $I := 64$ and $\lambda_n := 10^{-5}$ $(n\in\mathbb{N})$. Figures \ref{fig:1} and \ref{fig:2} show that Algorithm \ref{algo:1} when $\lambda_n := 10^{-5}$ ($D_{1000} \approx 10^{-6}$) performed slightly better than when $\lambda_n := 10^{-3}$ ($D_{1000} \approx 10^{-4}$). 
In particular, the figures indicate that $(D_n)_{n\in\mathbb{N}}$ for Algorithm \ref{algo:2} when $\lambda_n := 10^{-5}$ was more stable than when $\lambda_n := 10^{-3}$ and that the behavior of ISM when $\lambda_n = 10^{-5}$ was unstable and almost the same as when $\lambda_n := 10^{-3}$. Figure \ref{fig:4} shows that $(F_n)_{n\in\mathbb{N}}$ for ISM and Algorithm \ref{algo:2} decreased during the early iterations compared with $(F_n)_{n\in\mathbb{N}}$ for Algorithm \ref{algo:1}.

Next, let us consider the case where $I := 64$ and $\lambda_n := 10^{-3}/(n+1)^{0.1}$ $(n\in\mathbb{N})$. Figure \ref{fig:5} shows that $(D_n)_{n\in \mathbb{N}}$ generated by Algorithm \ref{algo:1} was stable while those generated by ISM and Algorithm \ref{algo:2} were unstable and approximately zero during the early iterations, as in the case with $\lambda_n := 10^{-3}$ (Figure \ref{fig:1}). Figure \ref{fig:7} shows that $F_n$ decreased faster with ISM and Algorithm \ref{algo:2} than with Algorithm \ref{algo:1}. Figures \ref{fig:6} and \ref{fig:8} illustrate the behaviors of $D_n$ and $F_n$ when $I:= 64$ and $\lambda_n := 10^{-3}/(n+1)^{0.01}$ $(n\in\mathbb{N})$ and show that the behaviors were almost the same as the ones when $\lambda_n := 10^{-3}/(n+1)^{0.1}$ $(n\in\mathbb{N})$ (Figures \ref{fig:5} and \ref{fig:7}).


Let us fix the step size $\lambda_n := 10^{-3}/(n+1)^{0.01}$ $(n\in\mathbb{N})$ and see how the number of users affects the efficiency of Algorithms \ref{algo:1} and \ref{algo:2}. The behaviors of $D_n$ and $F_n$ for Algorithm \ref{algo:1} when $I := 16, 64, 256$ are illustrated in Figure \ref{fig:9}. Although $(D_n)_{n\in \mathbb{N}}$ and $(F_n)_{n\in\mathbb{N}}$ were stable, the larger the $I$, the greater the number of iterations that were required (Figure \ref{fig:9}(a), (c)) and the longer the elapsed time (Figure \ref{fig:9}(b), (d)). That is, the efficiency of Algorithm \ref{algo:1} decreases as the number of users increases.
The behaviors of $D_n$ and $F_n$ for Algorithm \ref{algo:2} when $I := 16, 64, 256$ are illustrated in Figure \ref{fig:10}. Although $(D_n)_{n\geq 40}$ were unstable, $D_{10} \approx 10^{-5}$ held for the three cases (Figure \ref{fig:10}(a), (b)), and $(F_n)_{n\in\mathbb{N}}$ for the three cases converged in the early stages (Figure \ref{fig:10}(c), (d)). 

Finally, let us consider the case when $\lambda_n := 10^{-3}/(n+1)$ $(n\in \mathbb{N})$ and $f^{(1)}$ replaced by $f^{(1)}(x) := a^{(1)} \| x + b^{(1)} \|^2$ $(x\in \mathbb{R}^I)$, where $a^{(1)} \in (0,100]$ and $b^{(1)} \in [-100,100]^I$ were chosen randomly, to support the convergence analysis of Algorithms \ref{algo:1} 
and \ref{algo:2} discussed in Subsections \ref{subsec:3.2}, \ref{subsec:3.3}, \ref{subsec:4.2}, and \ref{subsec:4.3}
(see also assumption (i) in Theorems \ref{thm:2} and \ref{thm:4} and condition $\sum_{n=0}^\infty \lambda_n^2 < \infty$ in Corollaries \ref{cor:2} and \ref{cor:4}).
Since $f^{(1)}$ is strongly convex, Theorems \ref{thm:2} and \ref{thm:4} guarantee 
that Algorithms \ref{algo:1} and \ref{algo:2} converge to the solution to Problem \ref{prob:2}.   
Moreover, Corollaries \ref{cor:2} and \ref{cor:4} indicate that, under certain assumptions, Algorithm \ref{algo:1} satisfies inequality
\begin{align}\label{rate1}
\begin{split}
&\sum_{i\in \mathcal{I}} \left\| x_n - Q_{\mathrm{sp}}^{(i)} (x_n)  \right\| \leq 
\frac{I \sqrt{I M_3}}{\sqrt{n+1}},\\
&f(x_n) - f^\star \leq  \frac{I \left\{ \left(\sqrt{M_1} + M_2 \right)\sqrt{I M_3} + M_1  \right\}}{\sqrt{n+1}},
\end{split}
\end{align}
while Algorithm \ref{algo:2} satisfies inequality
\begin{align}\label{rate2}
\begin{split}
&\sum_{i\in \mathcal{I}} \left\| x_n^{(i-1)} - Q_{\mathrm{sp}}^{(i)} \left(x_n^{(i-1)} \right)  \right\| \leq 
\frac{I\sqrt{N_3}}{\sqrt{n+1}},\\
&f(x_n) - f^\star \leq 
\frac{I \left\{ \left(2 \sqrt{N_1} + (I+1)N_2 \right) \sqrt{N_3} + 
      \left((I-1) \sqrt{N_1} N_2 + 2 N_1 \right)\right\}}{2 \sqrt{n+1}}.
\end{split}      
\end{align}
Inequalities \eqref{rate1} and \eqref{rate2} imply that the efficiencies of Algorithms \ref{algo:1} and \ref{algo:2} may decrease as the number of users $I$ increases. Figure \ref{fig:11} shows that $(D_n)_{n\in \mathbb{N}}$ generated by Algorithm \ref{algo:1} was monotone decreasing and that, the larger the $I$, the greater the number of iterations that were required (Figure \ref{fig:11}(a), (c)) and the longer the elapsed time (Figure \ref{fig:11}(b), (d)), as seen in Figure \ref{fig:9}. This can be seen in \eqref{rate1}. Figure \ref{fig:12} illustrates the behaviors of $D_n$ and $F_n$ for Algorithm \ref{algo:2}. 
It shows that the behaviors of Algorithm \ref{algo:2} when one $f^{(i)}$ was strongly convex were more stable than when all $f^{(i)}$ were convex (Figures \ref{fig:5}--\ref{fig:8} and \ref{fig:10}). The strong convexity condition of $f^{(1)}$ (i.e., the uniqueness of the solution to Problem \ref{prob:2}) apparently affects the stability of Algorithm \ref{algo:2}. This is consistent with Theorem \ref{thm:4} and indicates that the whole sequence $(x_n)_{n\in \mathbb{N}}$ in Algorithm \ref{algo:2} converges when one $f^{(i)}$ is strongly convex while a subsequence of $(x_n)_{n\in \mathbb{N}}$ converges when all $f^{(i)}$ are convex. Although \eqref{rate2} and Figure \ref{fig:12} show that the efficiency of Algorithm \ref{algo:2} decreases as $I$ increases, Algorithm \ref{algo:2} has fast convergence regardless of the number of users. Furthermore, as shown by Figures \ref{fig:11} and \ref{fig:12}, when $I:=2$, Algorithm \ref{algo:1} performed better than Algorithm \ref{algo:2} in the early stages. This means that Algorithm \ref{algo:1} is well suited for use when the number of users is small.

From the above discussion, we conclude that Algorithm \ref{algo:1} is robust in terms of stability regardless of the number of users and is well suited for small-scale convex optimization problems over fixed point sets of quasi-nonexpansive mappings. We also conclude that Algorithm \ref{algo:2} has fast convergence regardless of the number of users and is well suited for solving large-scale convex optimization problems over fixed point sets of quasi-nonexpansive mappings.

\section{Conclusion and Future Work}\label{sec:5}
This paper described parallel and incremental subgradient methods for minimizing the sum of nondifferentiable, convex functions over the intersection of fixed point sets of quasi-nonexpansive mappings in a real Hilbert space. Investigation of the convergence properties for a constant step-size rule and a diminishing step-size rule showed that, with a small constant step size, the two methods give an approximate solution to the minimization problem and that, with a diminishing sequence, the sequence generated by each of the two methods strongly converges to the solution to the minimization problem under certain assumptions.
The convergence rate of the two methods was analyzed under certain situations.

This paper also numerically compared the proposed methods with an existing method for nonsmooth convex optimization over sublevel sets of convex functions. Numerical examples demonstrated that, for concrete convex optimization problems when the number of users is fixed, the parallel subgradient method with a constant or diminishing step size is more stable than the incremental subgradient method with the same step size while the incremental subgradient method has faster convergence. The numerical examples also demonstrated that the efficiency of the parallel subgradient method decreased as the number of users increased while the incremental subgradient method was robust even with a large number of users.

The proposed methods work well only when each user makes the best use of its own private information while the distributed random projection method \cite{lee2013} works well even when each user randomly sets one projection selected from many projections. This means that consideration should be given to developing distributed random fixed point algorithms that work when one user randomly chooses one quasi-nonexpansive mapping at a time. Consideration should also be given to devising nonsmooth convex optimization algorithms that combine stability and fast convergence, in contrast to the proposed methods. For example, an algorithm combining the parallel and incremental subgradient methods could be devised on the basis of the ideas in \cite{iiduka_hishinuma_siopt2014}. Such an algorithm should be numerically evaluated to see whether it performs better than the two proposed methods.\\

\textbf{Acknowledgements} 
I am sincerely grateful to the editor, Alexander Shapiro, the anonymous associate editor, and the two anonymous reviewers for helping me improve the original manuscript. I also thank Kazuhiro Hishinuma for his input on the numerical examples.


\newpage


\begin{figure}[H]
\subfigure[$D_n$ vs. no. of iterations]{\includegraphics[width=0.8\textwidth]{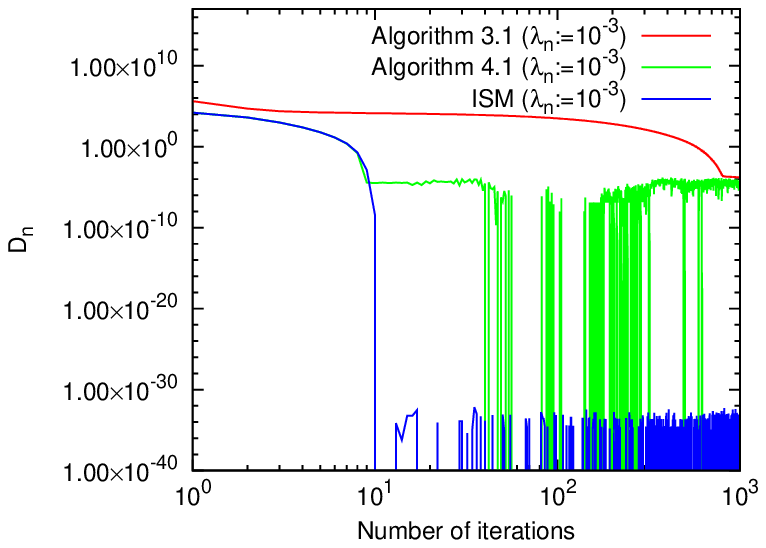}}
\subfigure[$D_n$ vs. elapsed time]{\includegraphics[width=0.8\textwidth]{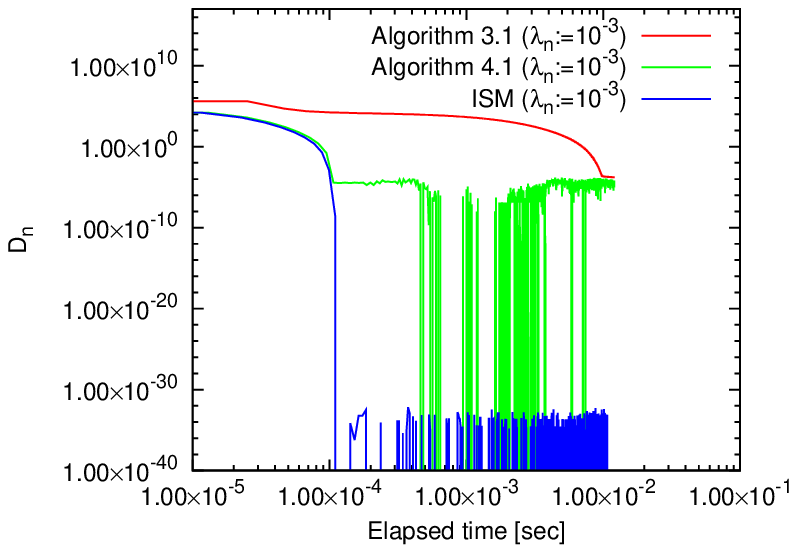}}
\caption{Behavior of $D_n$ for ISM, Algorithm \ref{algo:1}, and Algorithm \ref{algo:2} when $\lambda_n := 10^{-3}$ and $I:= 64$}\label{fig:1}
\end{figure}

\begin{figure}[H]
\subfigure[$F_n$ vs. no. of iterations]{\includegraphics[width=0.8\textwidth]{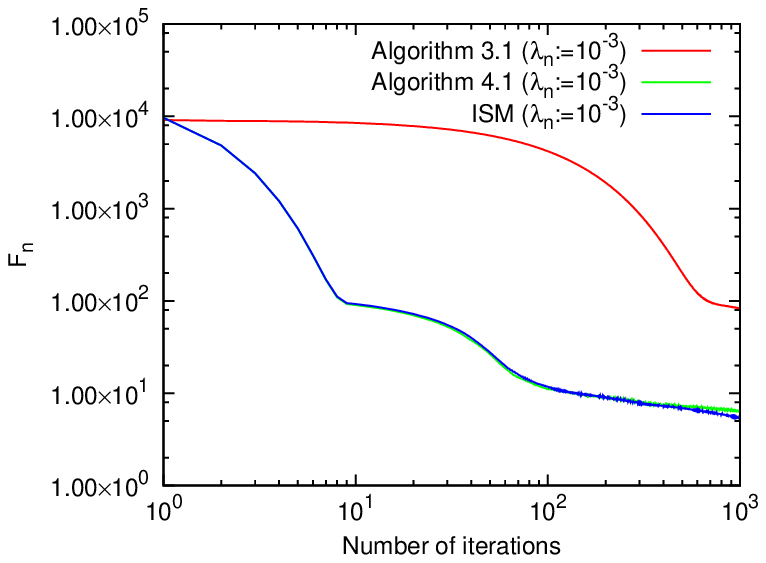}}
\subfigure[$F_n$ vs. elapsed time]{\includegraphics[width=0.8\textwidth]{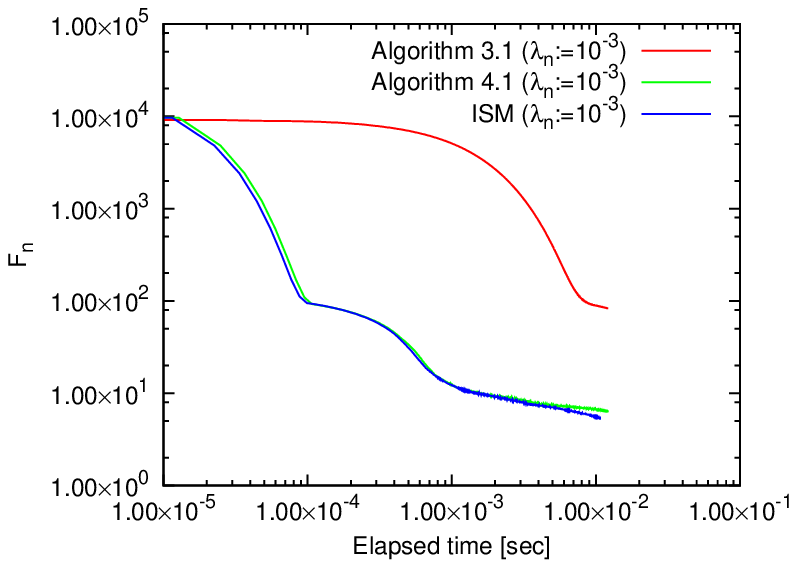}}
\caption{Behavior of $F_n$ for ISM, Algorithm \ref{algo:1}, and Algorithm \ref{algo:2} when $\lambda_n := 10^{-3}$ and $I:= 64$}\label{fig:3}
\end{figure}

\begin{figure}[H]
\subfigure[$D_n$ vs. no. of iterations]{\includegraphics[width=0.8\textwidth]{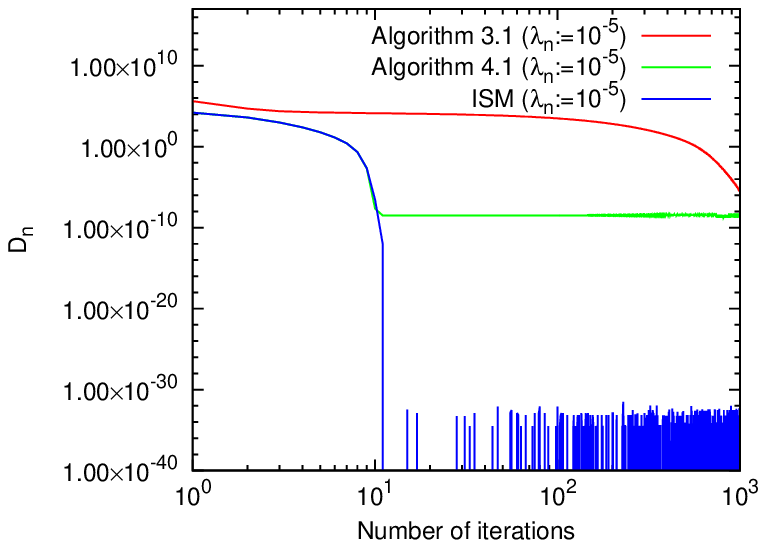}}
\subfigure[$D_n$ vs. elapsed time]{\includegraphics[width=0.8\textwidth]{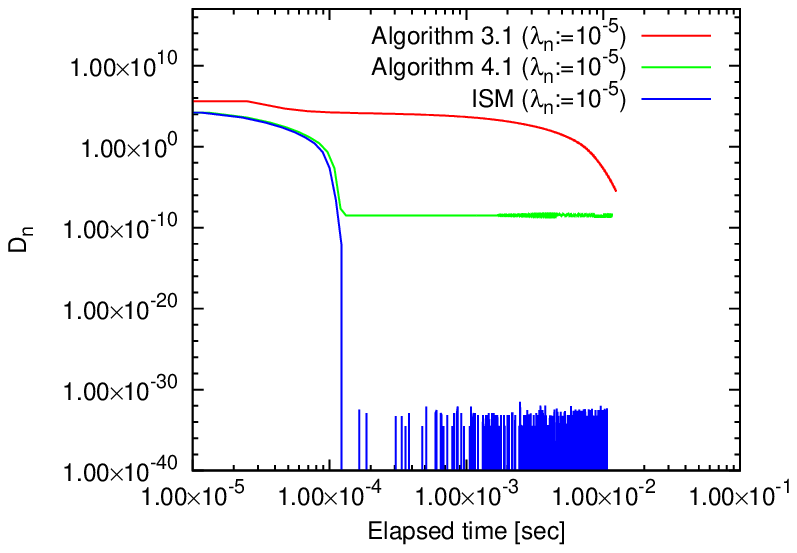}}
\caption{Behavior of $D_n$ for ISM, Algorithm \ref{algo:1}, and Algorithm \ref{algo:2} when $\lambda_n := 10^{-5}$ and $I:= 64$}\label{fig:2}
\end{figure}

\begin{figure}[H]
\subfigure[$F_n$ vs. no. of iterations]{\includegraphics[width=0.8\textwidth]{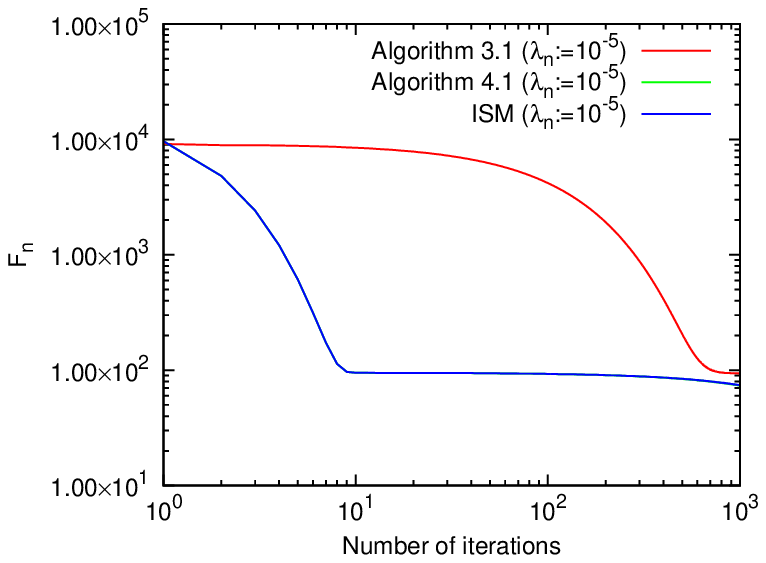}}
\subfigure[$F_n$ vs. elapsed time]{\includegraphics[width=0.8\textwidth]{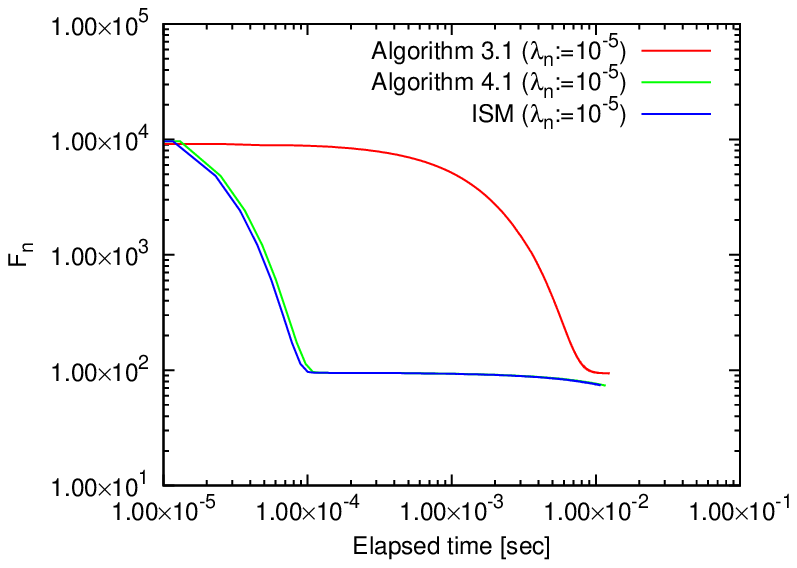}}
\caption{Behavior of $F_n$ for ISM, Algorithm \ref{algo:1}, and Algorithm \ref{algo:2} when $\lambda_n := 10^{-5}$ and $I:= 64$}\label{fig:4}
\end{figure}

\begin{figure}[H]
\subfigure[$D_n$ vs. no. of iterations]{\includegraphics[width=0.8\textwidth]{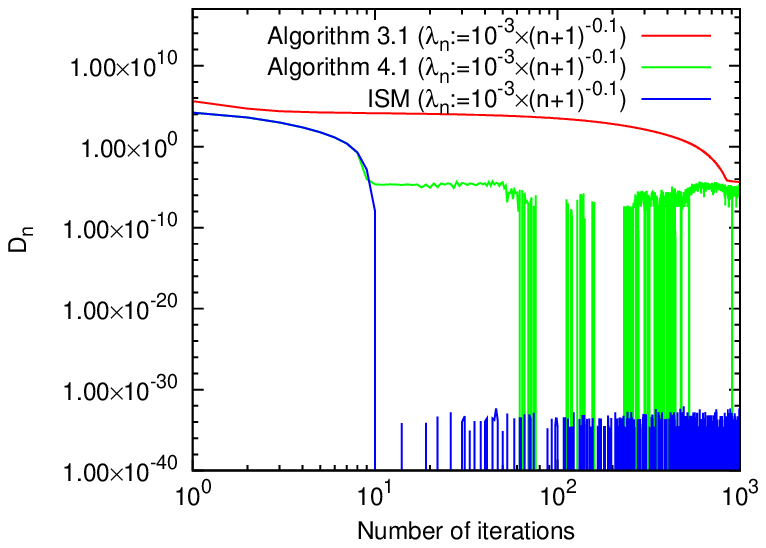}}
\subfigure[$D_n$ vs. elapsed time]{\includegraphics[width=0.8\textwidth]{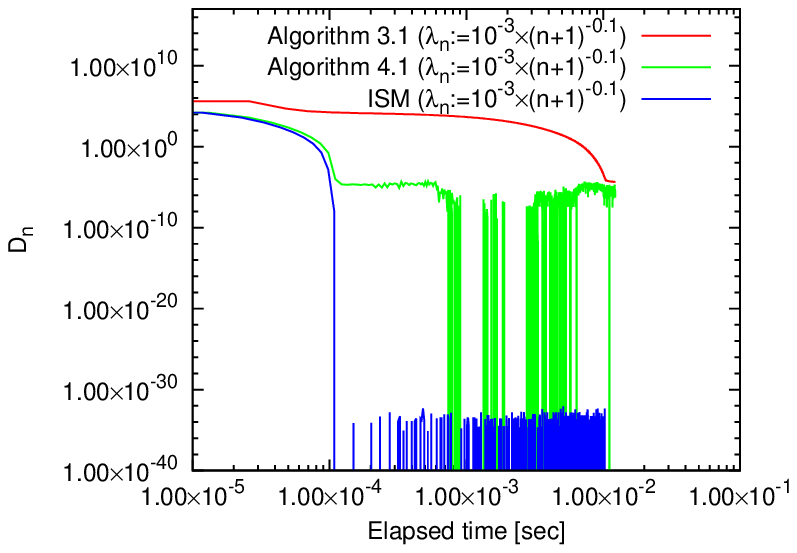}}
\caption{Behavior of $D_n$ for ISM, Algorithm \ref{algo:1}, and Algorithm \ref{algo:2} when $\lambda_n := 10^{-3}/(n+1)^{0.1}$ and $I:= 64$}\label{fig:5}
\end{figure}

\begin{figure}[H]
\subfigure[$F_n$ vs. no. of iterations]{\includegraphics[width=0.8\textwidth]{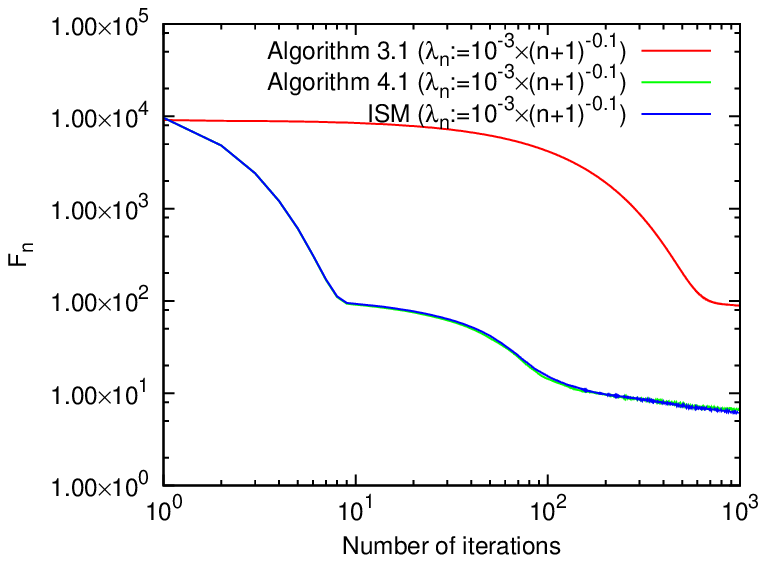}}
\subfigure[$F_n$ vs. elapsed time]{\includegraphics[width=0.8\textwidth]{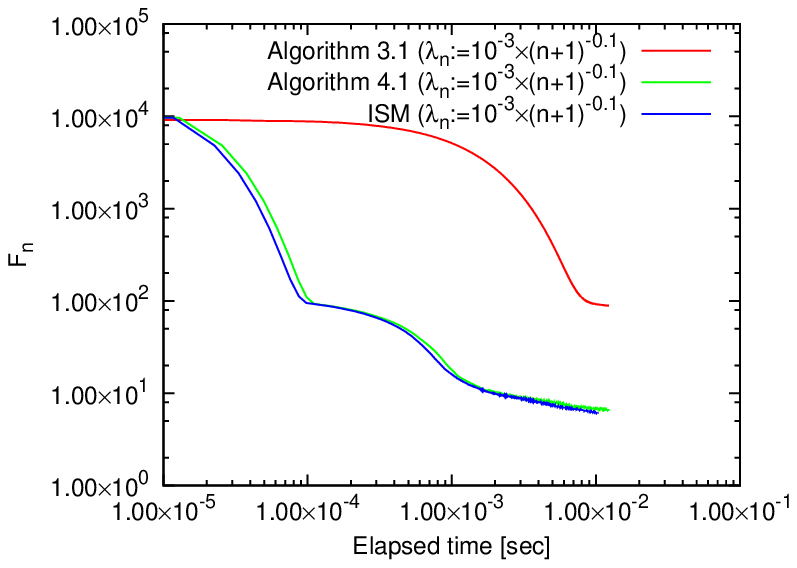}}
\caption{Behavior of $F_n$ for ISM, Algorithm \ref{algo:1}, and Algorithm \ref{algo:2} when $\lambda_n := 10^{-3}/(n+1)^{0.1}$ and $I:= 64$}\label{fig:7}
\end{figure}

\begin{figure}[H]
\subfigure[$D_n$ vs. no. of iterations]{\includegraphics[width=0.8\textwidth]{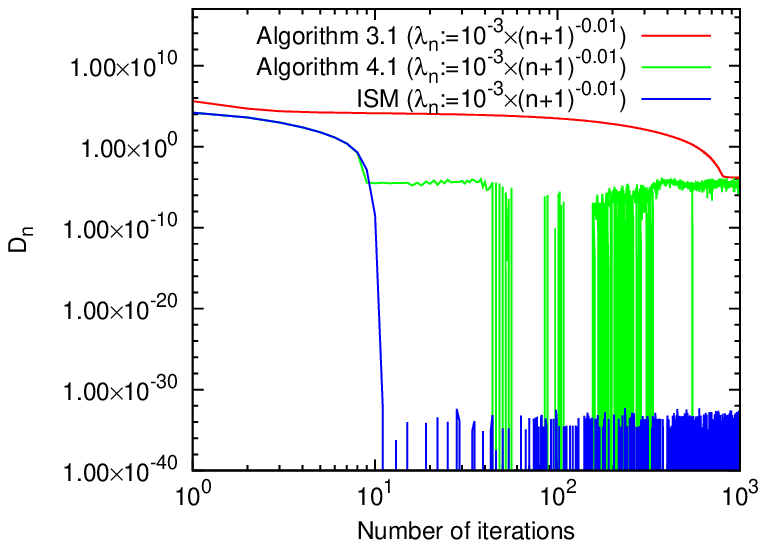}}
\subfigure[$D_n$ vs. elapsed time]{\includegraphics[width=0.8\textwidth]{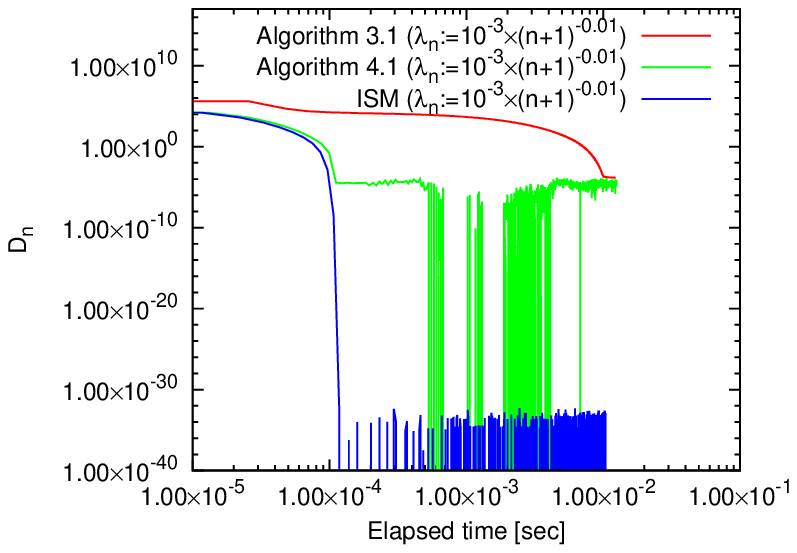}}
\caption{Behavior of $D_n$ for ISM, Algorithm \ref{algo:1}, and Algorithm \ref{algo:2} when $\lambda_n := 10^{-3}/(n+1)^{0.01}$ and $I:= 64$}\label{fig:6}
\end{figure}

\begin{figure}[H]
\subfigure[$F_n$ vs. no. of iterations]{\includegraphics[width=0.8\textwidth]{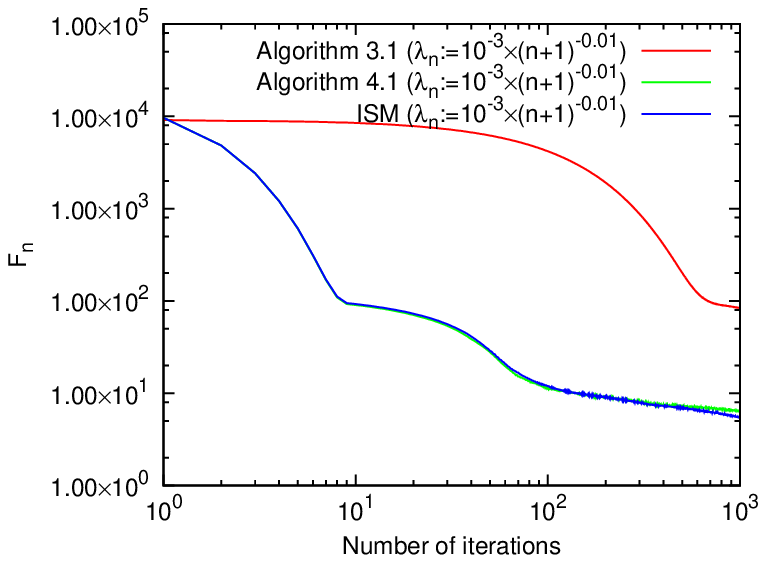}}
\subfigure[$F_n$ vs. elapsed time]{\includegraphics[width=0.8\textwidth]{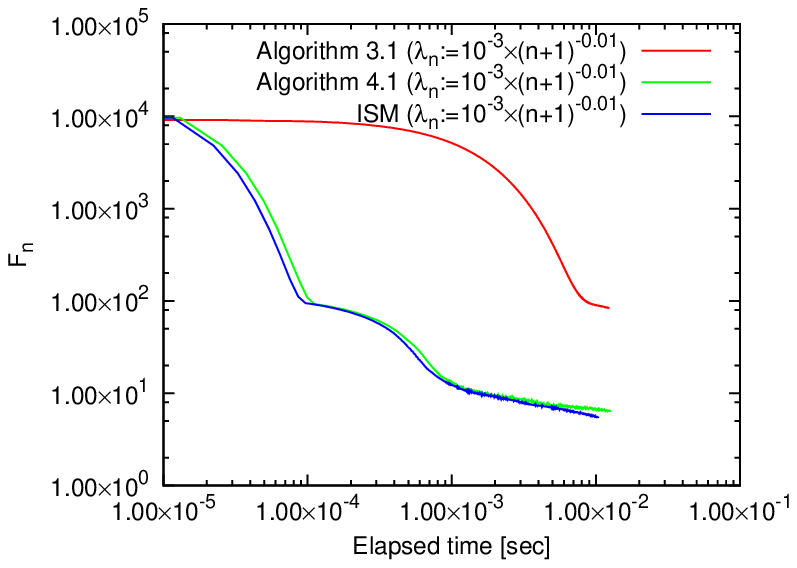}}
\caption{Behavior of $F_n$ for ISM, Algorithm \ref{algo:1}, and Algorithm \ref{algo:2} when $\lambda_n := 10^{-3}/(n+1)^{0.01}$ and $I:= 64$}\label{fig:8}
\end{figure}


\begin{figure}[H]
\subfigure[$D_n$ vs. no. of iterations]{\includegraphics[width=0.5\textwidth]{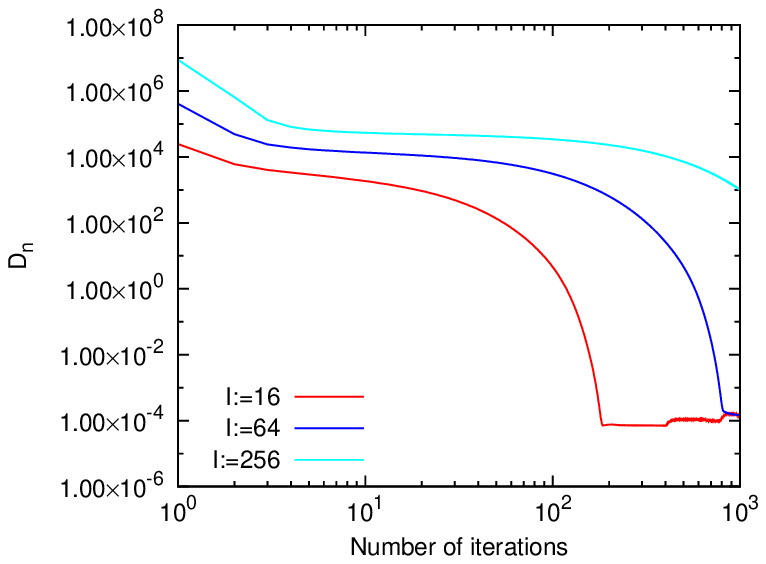}}
\subfigure[$D_n$ vs. elapsed time]{\includegraphics[width=0.5\textwidth]{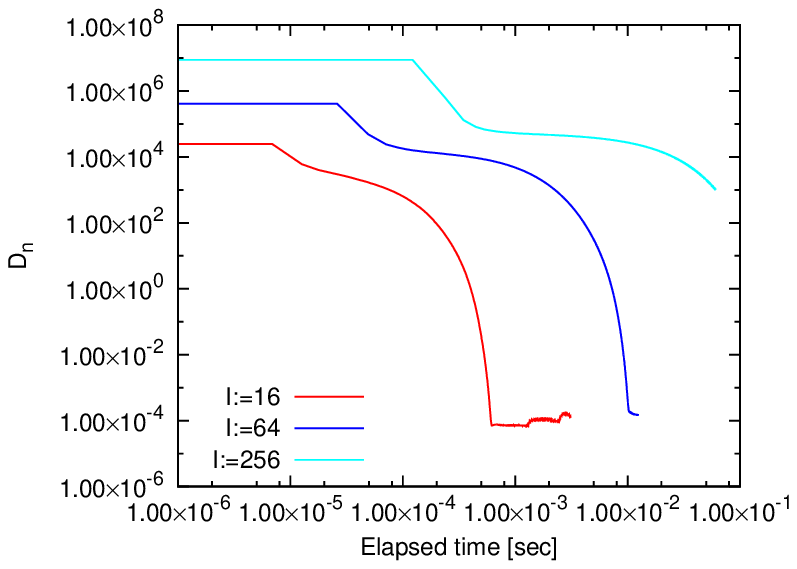}}
\\{}
\subfigure[$F_n$ vs. no. of iterations]{\includegraphics[width=0.5\textwidth]{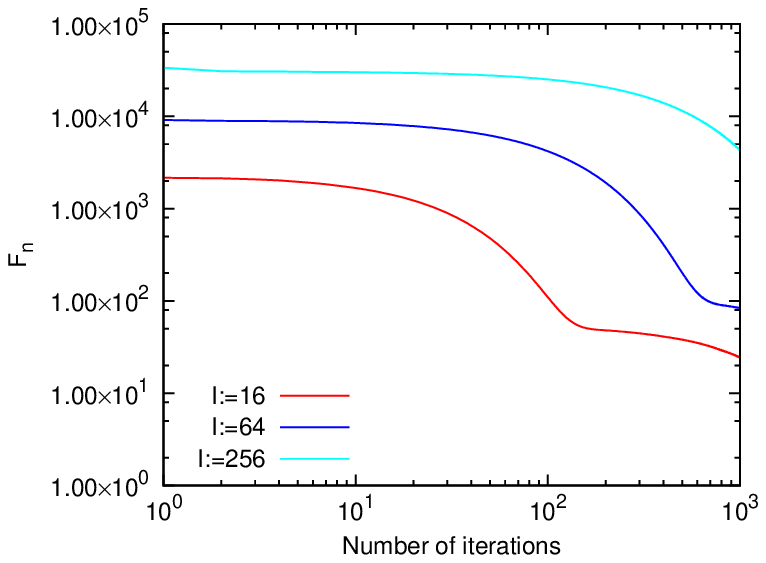}}
\subfigure[$F_n$ vs. elapsed time]{\includegraphics[width=0.5\textwidth]{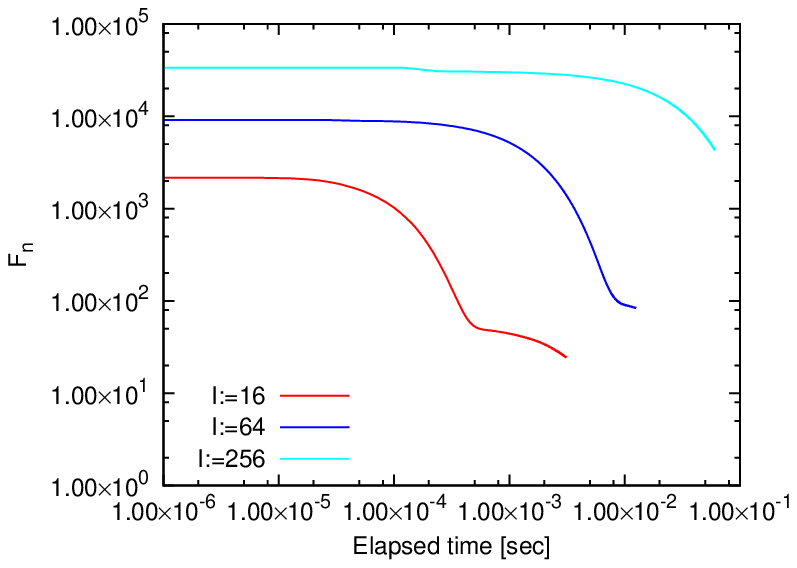}}
\caption{Efficiency of Algorithm \ref{algo:1} for $I$ when $\lambda_n := 10^{-3}/(n+1)^{0.01}$}\label{fig:9}
\end{figure}
\begin{figure}[H]
\subfigure[$D_n$ vs. no. of iterations]{\includegraphics[width=0.5\textwidth]{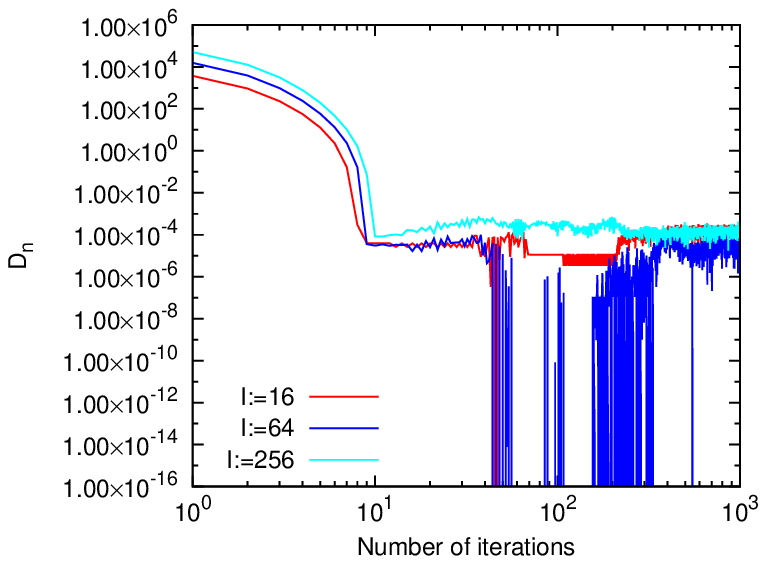}}
\subfigure[$D_n$ vs. elapsed time]{\includegraphics[width=0.5\textwidth]{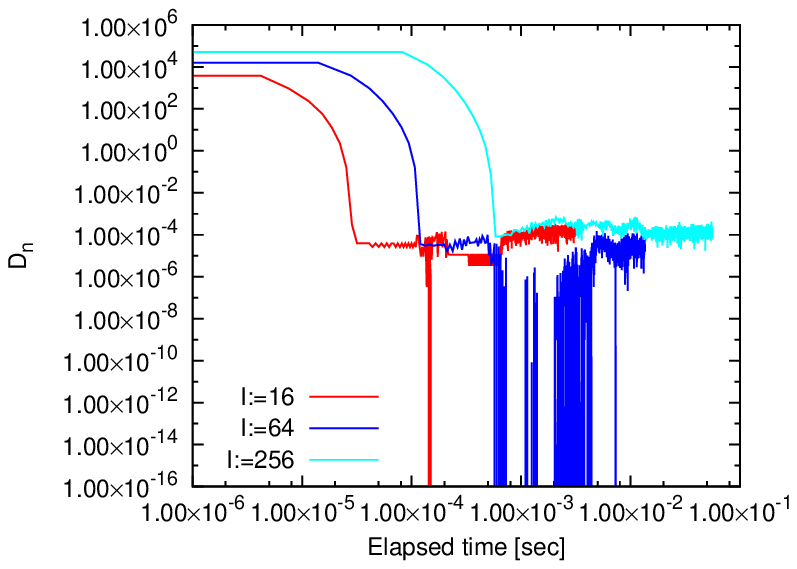}}
\\{}
\subfigure[$F_n$ vs. no. of iterations]{\includegraphics[width=0.5\textwidth]{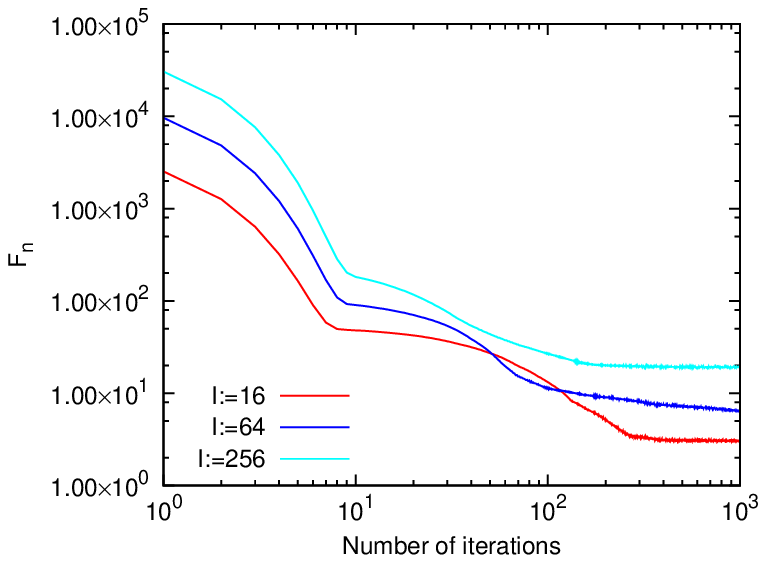}}
\subfigure[$F_n$ vs. elapsed time]{\includegraphics[width=0.5\textwidth]{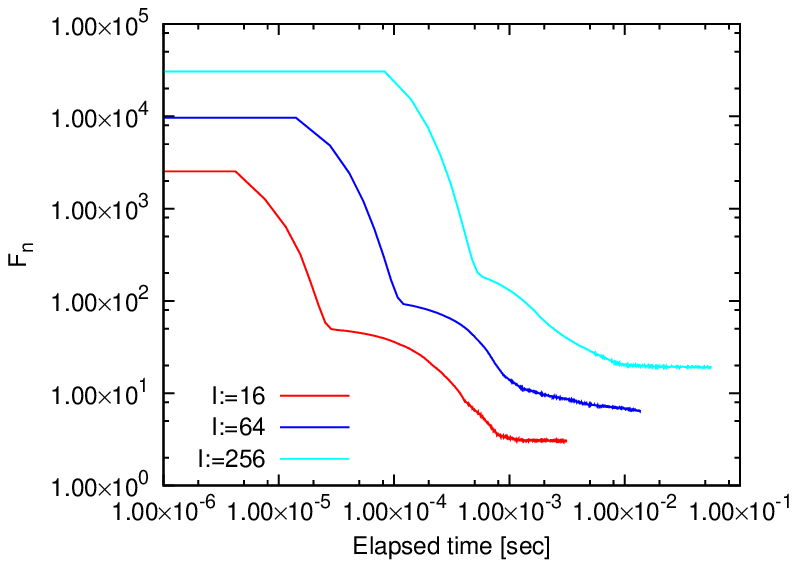}}
\caption{Efficiency of Algorithm \ref{algo:2} for $I$ when $\lambda_n := 10^{-3}/(n+1)^{0.01}$}\label{fig:10}
\end{figure}

\begin{figure}[H]
\subfigure[$D_n$ vs. no. of iterations]{\includegraphics[width=0.5\textwidth]{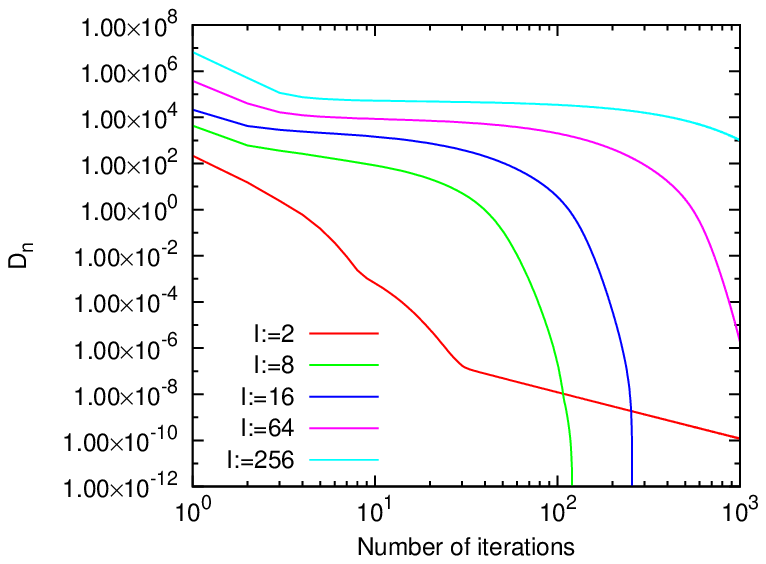}}
\subfigure[$D_n$ vs. elapsed time]{\includegraphics[width=0.5\textwidth]{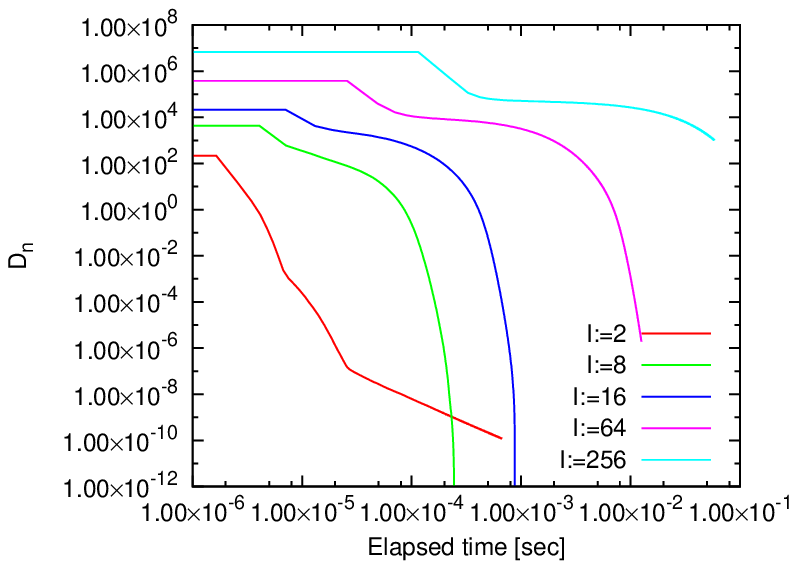}}
\\{}
\subfigure[$F_n$ vs. no. of iterations]{\includegraphics[width=0.5\textwidth]{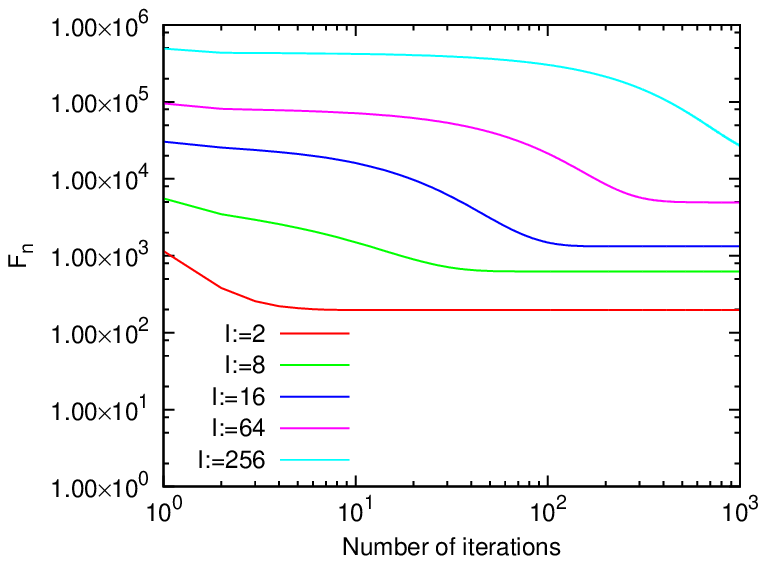}}
\subfigure[$F_n$ vs. elapsed time]{\includegraphics[width=0.5\textwidth]{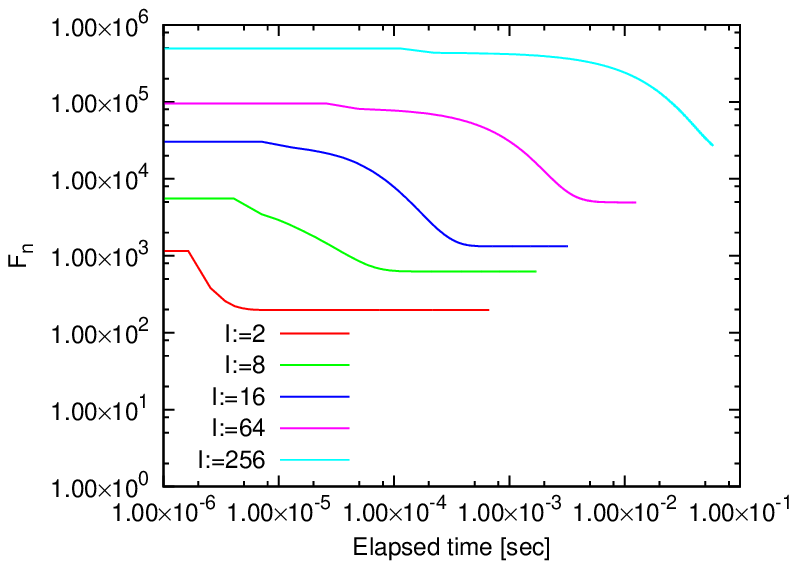}}
\caption{Efficiency of Algorithm \ref{algo:1} for $I$ when only $f^{(1)}$ is strongly convex and $\lambda_n := 10^{-3}/(n+1)$}\label{fig:11}
\end{figure}
\begin{figure}[H]
\subfigure[$D_n$ vs. no. of iterations]{\includegraphics[width=0.5\textwidth]{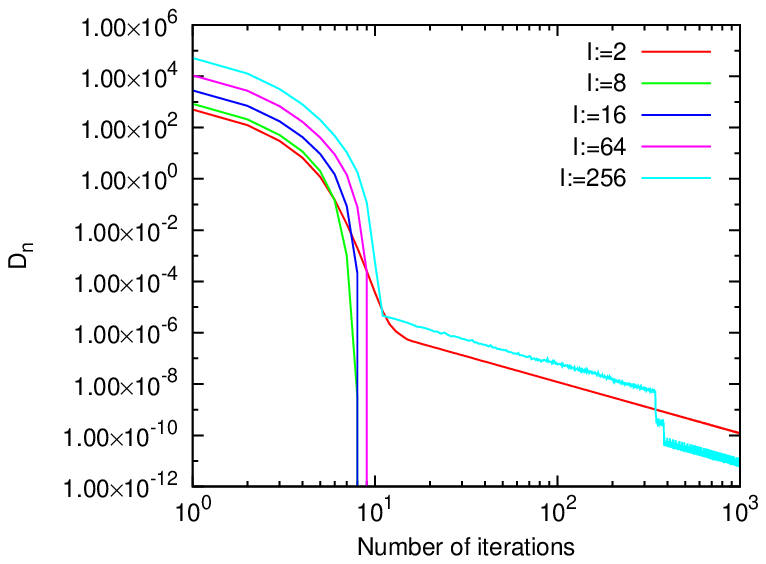}}
\subfigure[$D_n$ vs. elapsed time]{\includegraphics[width=0.5\textwidth]{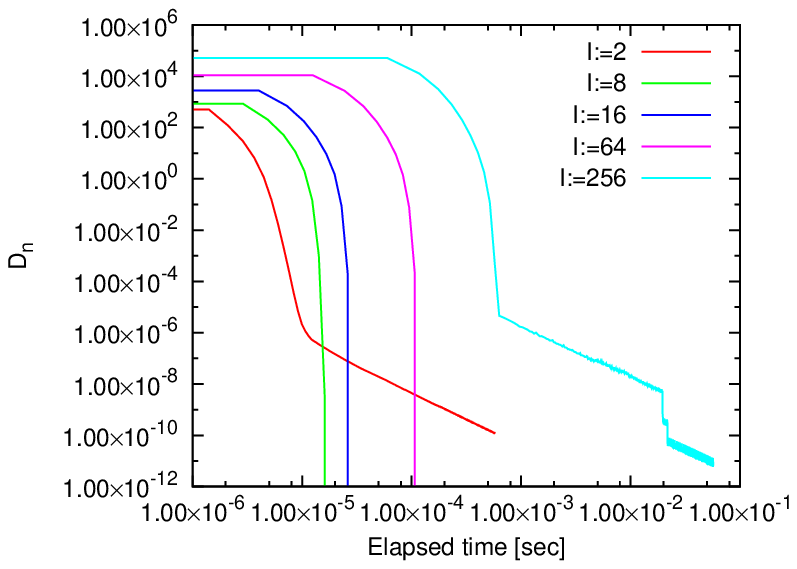}}
\\{}
\subfigure[$F_n$ vs. no. of iterations]{\includegraphics[width=0.5\textwidth]{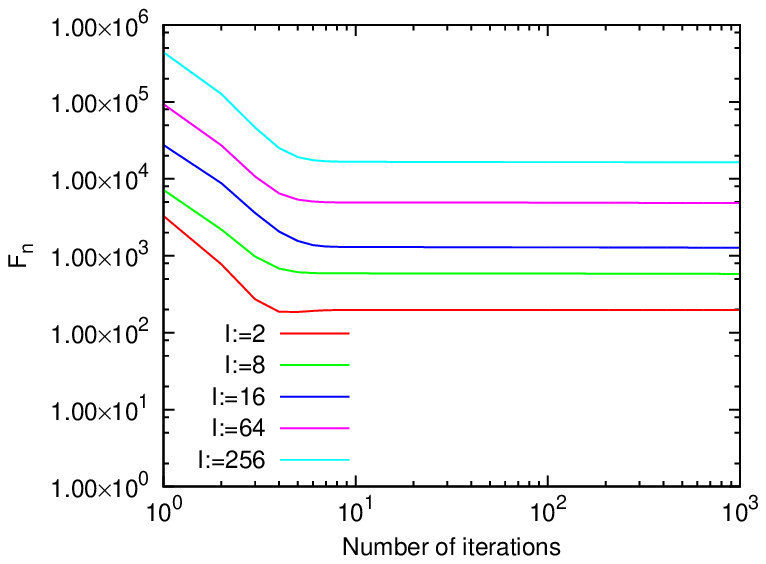}}
\subfigure[$F_n$ vs. elapsed time]{\includegraphics[width=0.5\textwidth]{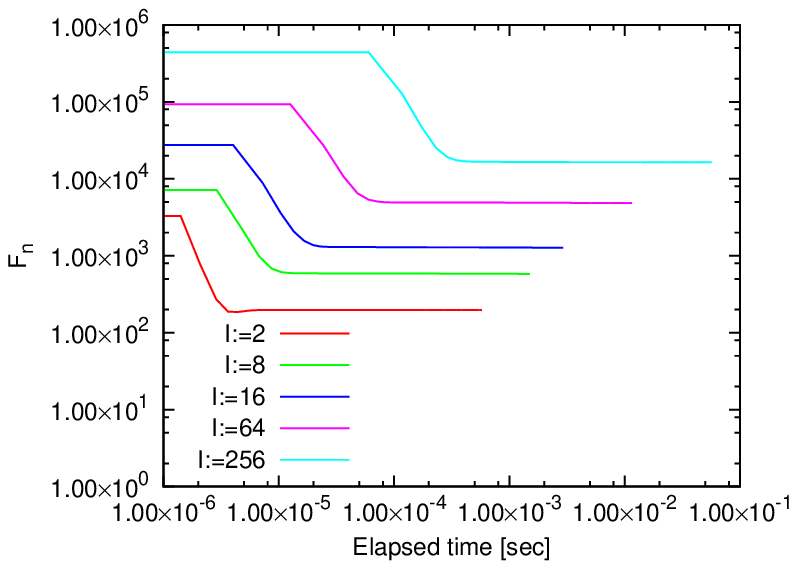}}
\caption{Efficiency of Algorithm \ref{algo:2} for $I$ when only $f^{(1)}$ is strongly convex and $\lambda_n := 10^{-3}/(n+1)$}\label{fig:12}
\end{figure}

\end{document}